\numberwithin{equation}{section}
\theoremstyle{plain}
\newtheorem{theorem}{Theorem}[section]
\newtheorem{lemma}{Lemma}[section]
\newtheorem{prop}{Proposition}[section]
\newtheorem{remark}{Remark}[section]
\newtheorem{definition}{Definition}[section]
\def\geq{\geqslant}
\def\leq{\leqslant}
\def\no{\nonumber}
\def\bt{\begin{theorem}}
\def\et{\end{theorem}}
\def\bl{\begin{lemma}}
\def\el{\end{lemma}}
\def\br{\begin{remark}}
\def\er{\end{remark}}
\begin{document}

\title{
       {\Large
          \bf{Well-posedness and large deviations of fractional McKean-Vlasov stochastic reaction-diffusion equations on unbounded domains
              }
       }
      }

\author{Zhang Chen$^{\ast}$, \  
        Bixiang Wang$^{\dagger}$ 
\\
\normalsize{$^\ast$School of Mathematics, Shandong University, Jinan 250100,  China} \\
\normalsize{$^\dagger$Department of Mathematics, New Mexico Institute of Mining and Technology,} \\
\normalsize{Socorro,  NM~87801, USA}\\
\normalsize{E-mail: zchen@sdu.edu.cn,\  bwang@nmt.edu} }

\date{}
 \maketitle
\begin{minipage}{14cm}
{\bf Abstract.}  This paper is mainly concerned with the large
deviation principle of the fractional McKean-Vlasov stochastic
reaction-diffusion equation defined on $\mathbb{R}^n$ with
polynomial drift of any degree. We first prove the well-posedness of
the underlying equation under a dissipative condition, and then show
the strong convergence of solutions of the corresponding controlled
equation with respect to the weak topology of controls, by employing
the idea of uniform tail-ends estimates of solutions in order to
circumvent the non-compactness of Sobolev embeddings on unbounded
domains. We finally establish the large deviation principle of the
fractional McKean-Vlasov equation by the weak convergence method
without assuming the time H\"{o}lder continuity of the
non-autonomous diffusion coefficients.

\vspace{2mm}

 \noindent{\bf Keywords.}
 Large deviation principle,  McKean-Vlasov  equation,
  fractional   Laplacian,   unbounded domain, compactness,
   tail-ends estimates.

 \vspace{2mm}

 \noindent{\bf AMS 2020 Mathematics Subject Classification.}
  60F10, 60H15, 37L55, 35R60.

\end{minipage}


\maketitle

\section{Introduction}
 This paper
 deals with
 the well-posedness and
 the  large deviation
 principle (LDP)   of
 the fractional McKean-Vlasov stochastic reaction-diffusion equation
 driven by nonlinear noise defined on $\mathbb{R}^n$:
\begin{align} \label{mvsde-1}
    d u^\varepsilon(t) + (-\triangle)^\alpha u^\varepsilon(t) dt
           & + f(t, x, u^\varepsilon (t), \mathcal{L}_{u^\varepsilon (t)} ) dt
             = g(t,x,u^\varepsilon (t), \mathcal {L}_{u^\varepsilon (t)} )dt
               \nonumber \\
           & + \sqrt{\varepsilon}
                  \sum_{k=1}^{\infty}
                          \left( \sigma_{1,k}(t,x) + \kappa(x) \sigma_{2,k}(t, u^\varepsilon (t), \mathcal {L}_{u^\varepsilon (t)})
                          \right) dW_k(t),
\end{align}
 with initial data
\begin{align}\label{mvsde-1 IV}
   u^\varepsilon(0) = u_0,
\end{align}
 where
 $(-\triangle)^\alpha$ with $\alpha \in (0,1)$ is the fractional Laplace operator,
 $\varepsilon \in (0,1)$,
 $\mathcal{L}_{u^\varepsilon (t)}$
 is the distribution  law of   $u^\varepsilon (t)$,
 $f$ is a nonlinear  function with arbitrary growth rate,
 $g$ is a Lipschitz function,
 $\sigma_{1,k} : \mathbb{R} \to L^2(\mathbb{R}^n)$
 is given,
 $\kappa \in L^2(\mathbb{R}^n) \bigcap L^\infty(\mathbb{R}^n)$,
and
 $\sigma_{2,k}$ is  a nonlinear diffusion term,
 and
 $\{W_k\}_{k\in \mathbb{N}}$ is a sequence of independent
 standard  real-valued Wiener processes
 on a complete filtered probability space $\left( \Omega, \mathcal{F}, \{\mathcal{F}_t\}_{t \geq 0}, \mathbb{P} \right)$.

The fractional
 differential  equations
 have numerous  applications in finance, physics, biology and probability
   \cite{at2005PA, gm2006PTRF, jara2009CPAM}.
If  the nonlinear drift and diffusion terms
do not depend on
 $\mathcal{L}_{u^\varepsilon (t)}$, then
  \eqref{mvsde-1}
  reduces to the  standard stochastic reaction-diffusion equation with fractional Laplacian.
  The asymptotic behaviors of solutions
  of the standard
  fractional   equations
  have been extensively studied in the
  literature, see, e.g.,
   \cite{pg2013AA, lblz2015JDE, gw2016DCDS}
   for deterministic equations, and
  \cite{gz2014NA, wangb2017NA, glwy2018JDE, bm2019SIMA, wangb2019JDE, cw2021Nonl, cw2022JDE, hzsg2022JDE}
    for stochastic equations.
  We also refer the reader to \cite{dkz2019AP} and \cite{salins2022SD}
  for global solutions of stochastic reaction-diffusion equations with super-linear drift on bounded domain and unbounded domain, respectively.

 Recently,
 the  McKean-Vlasov stochastic differential equations
 have received considerable attention
 after the seminal  works \cite{kac1958, mckean1966PNAC}.
 Such stochastic equations are also called mean field equations \cite{ll2007JM, blpr2017AP}
 and  describe the limits of interacting particle systems
 \cite{S1991LNM, dv1995NoDEA, fg2015PTRF, bh2022JTP, hl2023arXiv},
  which   depend  not only on the states of solutions,
 but also on the distribution law of solutions.
 The existence of  solutions
 of the McKean-Vlasov equations
  have been intensively studied
  by many authors,
 see, e.g.
      \cite{ad1995spa, wang2018SPA, rz2021Bernoul, hds2021AP, fhsy2022SPA, ghl2022SD} and
      the references therein.
      The reader is also referred to
 \cite{b2018AIHP, rw2019JDE} for
 the Bismut formula and
   \cite{rsx2021AIHP, lwx2022arXiv} for
   the averaging principles of  such  systems.
In this paper, we investigate the
LDP of the
   fractional McKean-Vlasov    equation
\eqref{mvsde-1} on ${\mathbb{R} }^n$.

 The LDP is concerned with the exponential rate of convergence of solutions of stochastic equations with small noise,
 which has applications in many fields such as statistics, information theory and statistical physics \cite{dz1998, fw2012}.
 The classical method for proving the LDP is based on the time discretization and exponential tightness,
 see, e.g.,
           \cite{dz1998, fw2012, s1992ap, chowCPAM1992, kxAP1996, crAP2004}.
 To avoid the complicated time discretization,
 the weak convergence method for the LDP was developed in
  \cite{de1997, bd2000PMS, bdm2008AP}
 based on variational representations of positive functionals of Brownian motion,
 which is effective for proving the LDP of a large class of stochastic equations,
 see, e.g.
    \cite{ss2006SPA, dm2009SPA, cm2010AMO, liu2010AMO, bpm2012AP, bgj2017ARMA,
          dzz2017JDE, zz2017AMO, cd2019AIHP, msz2021AMO, hll2021SIMA, csw2024PRSEA}
 for equations driven by Brownian motion
 and \cite{by2015Stoch, lszz2022PA}
 for equations driven by jump processes.
 In particular, in the case of unbounded domains,
 the LDP of the fractional reaction-diffusion equation
 and the tamed 3D Navier-Stokes equations
 was examined in \cite{wangb2023JDE} and \cite{rzz2010AMO}, respectively.

 Currently, there are very few  results on the LDP of  the McKean-Vlasov stochastic equations,
 see \cite{lszz2022PA, hip2008AAP, dst2019AAP,hll2021AMO, hhl2022arXiv}.
 It is worth mentioning that in \cite{hll2021AMO, hhl2022arXiv},
 the authors established the LDP for a class of McKean-Vlasov stochastic equations
 with locally monotone nonlinearity by the weak convergence method.
 The results of both papers apply to many stochastic equations defined on bounded as well as unbounded domains,
 including the reaction-diffusion equation and the Navier-Stokes equations.

 However, the arguments of \cite{hll2021AMO, hhl2022arXiv}
 have restrictions on the growth order of the nonlinear terms
 and require the  non-autonomous diffusion terms to be regular in time.
 Specifically, for the reaction-diffusion equation \eqref{mvsde-1},
 the assumptions in \cite{hll2021AMO, hhl2022arXiv}
 impose an upper bound on the growth order of $f$ in its third argument and require $\sigma_{1,k}$
 and $\sigma_{2,k}$ to be H\"{o}lder continuous in time.
 Consequently, the results of \cite{hll2021AMO, hhl2022arXiv}
 do not apply to \eqref{mvsde-1}
 when $f$ has a polynomial  growth of
 arbitrary order in its third argument
 or when $\sigma_{1,k}$
 or $\sigma_{2,k}$ is not H\"{o}lder continuous in time.
 In the present paper,
 we deal with this case and prove the LDP of \eqref{mvsde-1}
 when $f$ has an arbitrary  growth order,
 $\sigma_{1,k}$ and $\sigma_{2,k}$  are continuous in time,
 but not H\"{o}lder continuous.

 To achieve our goal,
 we need some kind of compactness related to the tightness of probability distributions of a family of solutions.
 In the case of bounded domains,
 such compactness can be obtained from
 the regularity of solutions and the compactness of Sobolev embeddings.
 However,
 the McKean-Vlasov stochastic equation \eqref{mvsde-1}
 is defined on the unbounded domain $\mathbb{R}^n$
 and the Sobolev embeddings on $\mathbb{R}^n$ are no longer compact,
 which causes an essential difficulty for proving the LDP of \eqref{mvsde-1} with a polynomial drift $f$ of arbitrary degree.
 To solve the problem,
 we will employ the idea of uniform tail-ends estimates of solutions to establish the strong convergence of a family of solutions
 even though $f$ has an arbitrary growth order and the diffusion terms are not H\"{o}lder continuous in time.
 This approach was used in \cite{wangb2023JDE} for the standard fractional reaction-diffusion equation on $\mathbb{R}^n$
 where the drift and diffusion terms do not depend
 on the probability distributions of solutions.

 More precisely,
 to prove the LDP of \eqref{mvsde-1} with polynomial drift of any degree,
 we first establish the existence and uniqueness of adapted solutions to \eqref{mvsde-1}-\eqref{mvsde-1 IV}
 by the decoupled method
    (i.e., the method of freezing distributions), see Theorem \ref{mvsde-wellposed}.
 We then consider the controlled equation associated with \eqref{mvsde-1}-\eqref{mvsde-1 IV} for every control $v \in L^2(0,T; l^2)$,
 and prove the well-posedness by the approximation  arguments as in \cite{hll2021AMO, hhl2022arXiv}
 combined with more complicated analysis, see Theorem \ref{cewellposed}.
 To overcome the non-compactness of Sobolev embeddings on $\mathbb{R}^n$ and
 remove the assumptions of time H\"{o}lder continuity on diffusion coefficients in \cite{hll2021AMO, hhl2022arXiv},
 we derive the uniform tail-ends estimates of solutions in Lemma \ref{tailestim}
 and further prove the LDP of solutions in the Polish space
    $C([0,T], L^2({\mathbb{R}}^n ) ) \bigcap L^2(0,T; H^\alpha ({\mathbb{R}}^n ) )$
 by the weak convergence method, see Theorem \ref{main}.
 Under a stronger dissipativeness condition on the drift term $f$,
 we finally establish the LDP of solutions to \eqref{mvsde-1}-\eqref{mvsde-1 IV} in the space
 $ C([0,T], L^2({\mathbb{R}}^n )) \cap L^2(0,T; H^\alpha ({\mathbb{R}}^n ) ) \cap L^p(0,T; L^p ({\mathbb{R}}^n ) ) $
 in Theorem \ref{main-sd}.

 The paper is organized as follows.
  We first prove the well-posedness of \eqref{mvsde-1}-\eqref{mvsde-1 IV}
  and the corresponding controlled equation in Section 2 and Section 3, respectively.
  We then establish the LDP of \eqref{mvsde-1}-\eqref{mvsde-1 IV} in Section 4.

 Hereafter, we denote by
            $H=L^2(\mathbb{R}^n)$
 with inner product $(\cdot, \cdot)$ and norm $\| \cdot \|$.
 We also write
    $V=H^\alpha(\mathbb{R}^n)$
 with inner product $(\cdot, \cdot)_V$ and norm $\| \cdot \|_V$,
 where
 $$
   \| u \|_V^2
   = \| u \|^2
     + \int_{\mathbb{R}^n} \int_{\mathbb{R}^n} {\frac {|u(x) -u(y) |^2}
       {|x-y|^{n + 2\alpha}} } dx dy
   = \| u \|^2
     + {\frac 2{ C(n,\alpha) } }
       \| (- \triangle)^{\frac {\alpha}2}  u \|^2,
 $$
 where $C(n,\alpha)>0$ is a specific   number depending
 on $n$   and  $\alpha$.

 Denote by $\mathcal{P}(H)$ the space of all probability measures on $H$ with weak topology,
 and let
    $$ \mathcal{P}_m(H)
        =
          \left\{
                  \mu \in \mathcal {P}(H) : \mu(\| \cdot \|^m) := \int_H \| \xi \|^m \mu(d \xi)
                                                                < + \infty
          \right\},
    $$
 Then $\mathcal{P}_m(H)$ is a Polish space with $L^m$-Wasserstein distance given by:
    $$ \mathbb{W}_m(\mu, \nu)
       = \inf_{ \pi \in \mathcal{C}(\mu, \nu) }
                    \left(
                           \int_{H \times H} \| \xi -\eta \|^m \pi (d\xi, d\eta)
                    \right)^{\frac{1}{m}}  ,
                    \ \ \  \forall \  \mu, \nu \in \mathcal {P}_m(H),
    $$
 where $\mathcal{C}(\mu, \nu)$ is the set of all couplings of $\mu$ and $\nu$.

\section{Fractional McKean-Vlasov stochastic  equations on $\mathbb{R}^n$}

 In this section,
 we discuss the assumptions on the nonlinear drift and diffusion terms in \eqref{mvsde-1},
 and prove the existence and uniqueness of solutions.

\subsection{Assumptions on nonlinear drift and diffusion terms}

 In the sequel,
 we assume that
    $f: {\mathbb{R}} \times {\mathbb{R}}^n \times {\mathbb{R}} \times \mathcal{P}_2(H) \to {\mathbb{R}}$
 is a continuous function which satisfies the following conditions:

  \noindent
 $({\bf \Sigma 1})$\
  For any $t, u \in \mathbb{R}$,
          $x \in \mathbb{R}^n$,
          $\mu$ and $\mu_i \in \mathcal{P}_2(H)$ $(i=1,2)$,
\begin{align}\label{f-1}
   f(t,x,u,\mu) u \geq \lambda_1 |u|^p - \psi_1(t,x)(1 + |u|^2
   + \mu( \|\cdot\|^2) ),
\end{align}
\begin{align}\label{f-2}
   | f(t,x,u_1, \mu_1) - f(t,x,u_2, \mu_2) |
   \leq & \lambda_2 \left( \psi_2(t,x) + |u_1|^{p-2} + |u_2|^{p-2} \right) |u_1 - u_2|  \nonumber \\
        & + \psi_3(t,x) \mathbb{W}_2 (\mu_1, \mu_2) ),
          \ \ \
                \forall \  u_1, u_2 \in \mathbb{R},
\end{align}
\begin{align}\label{f-3}
   | f(t,x,u, \mu) |
   \leq \lambda_3 |u|^{p-1} + \psi_3(t,x) \left( 1 + \sqrt{ \mu( \|\cdot\|^2) } \right),
\end{align}
\begin{align}\label{f-4}
   \frac{\partial f}{\partial u}
    (t,x,u,\mu)
   \geq - \psi_4(t,x),
\end{align}
 where
 $p\ge 2$,  $\lambda_i >0$
 for  $i=1,2,3$,
       $\psi_i \in
        L_{loc}^\infty( \mathbb{R},
        L^\infty(\mathbb{R}^n) )
                   \bigcap
                   L_{loc}^1( \mathbb{R}, L^1(\mathbb{R}^n) )$
                   for
       $i = 1, 2,3,4$.

\begin{remark}
  Given  $t,u \in \mathbb{R}$,
          $x \in \mathbb{R}^n$
          and $\mu \in \mathcal{P}_2(H)$,
  let
     $$ f(t,x,u,\mu) = |u|^{p-2} u + \varphi(t,x) \int_{H}  h(\xi)\mu(d\xi), $$
  where
  $p\ge 2$,
  $\varphi:  \mathbb{R} \times \mathbb{R}^n \to \mathbb{R} $
  is continuous such that
  $\varphi
  \in
  L_{loc}^\infty( \mathbb{R},
  L^\infty(\mathbb{R}^n) )
  \bigcap
  L_{loc}^1( \mathbb{R}, L^1(\mathbb{R}^n) )$,
  and $h: H \rightarrow \mathbb{R}$ is     Lipschitz continuous.
  Then one may verify    that $f $ satisfies
  conditions \eqref{f-1}-\eqref{f-4}
  based on the fact that
  $\mathcal{P}_2(H) \subseteq \mathcal{P}_1(H)$
  and $\mathbb{W}_1(\cdot, \cdot)$ is the same as the Kantorovich-Rubinstein distance (see Remark 6.5  in \cite{villani2009}).
\end{remark}

We assume   the nonlinear
drift
$ g: \mathbb{R} \times \mathbb{R}^n \times \mathbb{R} \times \mathcal {P}_2(H)
\rightarrow \mathbb{R}$
is continuous which
further satisfies:
 for all $t, u_1, u_2 \in \mathbb{R},
  x \in \mathbb{R}^n$
and $\mu_1, \mu_2 \in \mathcal{P}_2(H)$,

  \noindent
 $({\bf \Sigma 2})$\
 \begin{align}
   & |g(t,x,0, \delta_0)| \leq \psi_g(t,x),   \label{g-1}  \\
   & |g(t,x, u_1, \mu_1) - g(t,x, u_1, \mu_2)|
     \leq \psi_g(t,x)
          \left( | u_1 - u_2 | + \mathbb{W}_2(\mu_1, \mu_2)
          \right),   \label{g-2}
\end{align}
 where
   $\psi_g(t,x) \in L_{loc}^\infty (\mathbb{R}, L^\infty (\mathbb{R}^n) )
                    \bigcap
                    L_{loc}^1 (\mathbb{R}, L^1 (\mathbb{R}^n) )$
   and $\delta_0$ is
   the  Dirac measure on $H$.

   It follows from \eqref{g-1}-\eqref{g-2} that
   for all
    $t, u  \in \mathbb{R},
   x \in \mathbb{R}^n$
   and $\mu  \in \mathcal{P}_2(H)$,
\begin{align} \label{g-3}
     |g(t,x, u, \mu)|
     \leq & |g(t,x, u, \mu) - g(t,x,0, \delta_0)|
             + |g(t,x,0, \delta_0)|
                \nonumber \\
     \leq & \psi_g(t,x)
                      \left (
                             1 + |u| + \sqrt{ \mu( \|\cdot\|^2) }
                      \right ).
\end{align}

Finally, for
   the diffusion terms, we assume:

  \noindent
 $({\bf \Sigma 3})$\
   $\sigma_1 = \{ \sigma_{1,k} \}_{k=1}^\infty : \mathbb{R} \rightarrow L^2(\mathbb{R}^n, l^2)$ is continuous,
   and for every $k\in \mathbb{N}$,
   $\sigma_{2,k}:
    \mathbb{R}   \times \mathbb{R} \times \mathcal {P}_2(H)
   \rightarrow \mathbb{R}$
   is continuous and
     there exist nonnegative sequences
   $\beta=\{\beta_{k}\}_{k=1}^\infty$ and $\gamma=\{\gamma_{k}\}_{k=1}^\infty$
   such that for any $t, s \in \mathbb{R}$ and $\mu \in \mathcal {P}_2(H)$,
   \begin{align}\label{sigma1}
       | \sigma_{2,k}(t,s,\mu ) | \leq \beta_{k} \left( 1 + \sqrt{ \mu( \|\cdot\|^2) } \right) + \gamma_{k} |s|.
   \end{align}
   In addition, we assume that  $\sigma_{2,k}(t,s,\mu)$ is Lipschitz continuous in
   $s$ and $\mu$
   uniformly for $t \in \mathbb{R}$;
    that is,
   there exists a positive constant $L_{\sigma,k}$ such that
   for any $t, s_1, s_2 \in \mathbb{R}$ and $\mu_1, \mu_2 \in \mathcal {P}_2(H)$,
  \begin{align} \label{sigma2}
      | \sigma_{2,k}(t, s_1, \mu_1) - \sigma_{2,k}(t, s_2, \mu_2) |
      \leq L_{\sigma,k} \left( | s_1 - s_2 | + \mathbb{W}_2 (\mu_1, \mu_2) \right),
  \end{align}
  where
       $ \sum_{k=1}^\infty
                  \left( \beta_{k}^2 + \gamma_{k}^2 + L_{\sigma,k}^2
                  \right)
        < \infty$.

 Let
     $l^2$
     be the Hilbert space of square summable sequences of real numbers,
     and   $W$ be a cylindrical
Wiener process in $l^2$ with
identity covariance operator
which means that there exists
a separable Hilbert space
$U$ such that
$W$ takes values in $U$ and   the
embedding $l^2 \hookrightarrow U$ is  Hilbert-Schmidt.

 Given $t \in \mathbb{R} $, $u \in H$ and $\mu \in \mathcal {P}_2(H)$,
 define $\sigma(t,u,\mu): l^2 \rightarrow H$ by
\begin{align}\label{sigmadef}
  \sigma (t,u,\mu)(\theta)(x)
   = \sum_{k=1}^{\infty}
                       \left( \sigma_{1,k} (t,x)
                              + \kappa(x) \sigma_{2,k} (t, u(x), \mu)
                       \right) \theta_k,\ \
                                 \forall\ \theta =(\theta_k)_{k=1}^{\infty}\in l^2,
                                           x \in \mathbb{R}^n.
\end{align}

 It follows from \eqref{sigma1} that the series in \eqref{sigmadef} is convergent in $H$.
 Furthermore, the operator $\sigma(t,u,\mu)$ is Hilbert-Schmidt from $l^2$ to $H$ and
\begin{align} \label{sigma3}
        & \|\sigma(t,u,\mu)\|_{L_2(l^2, H)}^2  \nonumber \\
      = & \sum_{k=1}^\infty \| \sigma_{1,k}(t,x) + \kappa(x) \sigma_{2,k} (t, u(x), \mu ) \|^2
           \nonumber \\
   \leq & 2 \sum_{k=1}^\infty \| \sigma_{1,k}(t) \|^2
          + 8 \| \kappa \|^2 \sum_{k=1}^\infty \beta_{k}^2 \left( 1 + \mu( \|\cdot\|^2) \right)
          + 4 \| \kappa \|_{L^\infty(\mathbb{R}^n)}^2 \sum_{k=1}^\infty \gamma_{k}^2 \| u \|^2  \nonumber\\
      = & 2 \| \sigma_{1}(t) \|_{L^2(\mathbb{R}^n, l^2)}^2
          + 8 \| \kappa \|^2 \| \beta \|_{l^2}^2 \left( 1 + \mu( \|\cdot\|^2) \right)
          + 4 \| \kappa \|_{L^\infty(\mathbb{R}^n)}^2 \| \gamma\|_{l^2}^2 \| u \|^2
             \nonumber \\
   \leq & M_{\sigma,T} \left( 1 + \| u \|^2 + \mu( \|\cdot\|^2) \right),
          \ \ \ \forall \ t \in [0,T],
\end{align}
 where $L_2(l^2, H)$ is the space of Hilbert-Schmidt operators from $l^2$ to $H$ with norm $\| \cdot \|_{L_2(l^2, H)}$,
 $M_{\sigma,T} = 2 \| \sigma_{1} \|_{C([0,T]; L^2(\mathbb{R}^n, l^2) )}^2
                 + 8 \| \kappa \|^2 \| \beta \|_{l^2}^2
                 + 4 \| \kappa \|_{L^\infty(\mathbb{R}^n)}^2 \| \gamma\|_{l^2}^2$.

 On the other hand, by \eqref{sigma2},
 we have for all $t \in \mathbb{R}$, $u_1, u_2 \in H$ and $\mu_1, \mu_2 \in \mathcal {P}_2(H)$,
\begin{align} \label{sigmalip}
     & \| \sigma (t, u_1, \mu_1) -\sigma (t, u_2, \mu_2)\|^2 _{L_2(l^2, H)}
          \nonumber \\
   = & \sum_{k=1}^\infty
            \| \kappa(x)
                      \left( \sigma_{2,k} (t, u_1, \mu_1)
                              - \sigma_{2,k} (t, u_2, \mu_2)
                      \right)
            \|^2  \nonumber \\
  \leq &
       2  \sum_{k=1}^\infty
        L_{\sigma,k}^2
                    \left[ \| \kappa \|_{L^\infty(\mathbb{R}^n)}^2
                    \| u_1 - u_2 \|^2
                           +
                           \| \kappa\|^2 \left( \mathbb{W}_2 (\mu_1, \mu_2)
                             \right)^2
                    \right]
                           \nonumber \\
  \leq & L_{\sigma} \left[ \| u_1 - u_2 \|^2
                           + \left( \mathbb{W}_2 (\mu_1, \mu_2)
                             \right)^2
                    \right],
\end{align}
 where
      $L_{\sigma} =
      2 \sum_{k=1}^\infty L_{\sigma,k}^2
      \left (
      \| \kappa \|_{L^\infty(\mathbb{R}^n)}^2
      +\| \kappa \|^2
      \right ) $.

With above notation,
the stochastic  equation \eqref{mvsde-1}
takes the  form:
\begin{align}\label{mvsde-2}
      & d u^\varepsilon(t) + (-\triangle)^\alpha u^\varepsilon(t) dt
         + f(t, \cdot,  u^\varepsilon (t), \mathcal{L}_{u^\varepsilon (t)} )
         dt \nonumber \\
    = & g(t, \cdot, u^\varepsilon (t), \mathcal{L}_{u^\varepsilon (t)} ) dt
        + \sqrt{\varepsilon}
                  \sigma(t, u^\varepsilon (t), \mathcal{L}_{u^\varepsilon (t)} ) dW(t),
\end{align}
with initial data
\begin{align} \label{mvsde-2 IV}
    u^\varepsilon(0) = u_0.
\end{align}

\subsection{Well-posedness of the stochastic equation}

A solution of
problem
\eqref{mvsde-2}-\eqref{mvsde-2 IV}
is understood in the following sense.

\begin{definition} \label{defsol}
  For $T>0$, $\varepsilon \in (0,1)$,
  $u_0 \in L^2(\Omega, \mathscr{F}_0; H)$,
 a continuous $H$-valued $\mathscr{F}_t$-adapted stochastic process $u^\varepsilon$ is called a
 solution of \eqref{mvsde-2}-\eqref{mvsde-2 IV} on $[0,T]$ if
 $$ u^\varepsilon
    \in L^2(\Omega, C([0,T], H) )
        \bigcap
        L^2(\Omega, L^2(0,T; V) )
        \bigcap
        L^p(\Omega, L^p(0,T; L^p(\mathbb{R}^n ) ) )
 $$
 such that for all $t \in [0,T]$ and $\xi \in V \bigcap L^p(\mathbb{R}^n)$,
 \begin{align*}
    & (u^\varepsilon (t), \xi)
      + \int_0^t
                 \left( (-\triangle)^{\frac{\alpha}{2}} u^\varepsilon(s),
                        (-\triangle)^{\frac{\alpha}{2}} \xi
                 \right) ds
      + \int_0^t \int_{\mathbb{R} ^n}  f( s, x, u^\varepsilon (s), \mathcal{L}_{u^\varepsilon (s)} ) \xi(x) dx ds
           \nonumber \\
  = & (u_0, \xi)
      + \int_0^t \left( g(s, \cdot, u^\varepsilon (s), \mathcal{L}_{u^\varepsilon (s)} ), \xi \right) ds
      + \sqrt{\varepsilon} \int_0^t
                                   \left( \xi, \sigma(s, u^\varepsilon(s), \mathcal{L}_{u^\varepsilon (s)} ) dW(s)
                                   \right),
                                   \ \ \   \mathbb{P} \text{-almost surely}.
 \end{align*}
\end{definition}

Note that if
$u^\varepsilon$
is a solution of
 \eqref{mvsde-2}-\eqref{mvsde-2 IV}
 in the sense of
 Definition \ref{defsol}, then
   $ (-\triangle)^{\alpha} u^\varepsilon \in L^2(\Omega, L^2(0,T; V^\ast) ) $
 and
   $ f(\cdot , \cdot,  u^\varepsilon , \mathcal{L}_{u^\varepsilon } ) \in L^q(\Omega, L^q(0,T; L^q(\mathbb{R}^n ) ) ) $
with $\frac{1}{p} + \frac{1}{q} =1$,
and hence   for all $t \in [0,T]$,
 \begin{align} \label{sfnsol-u}
    & u^\varepsilon (t)
      + \int_0^t (-\triangle)^{\alpha} u^\varepsilon(s) ds
      + \int_0^t f( s, \cdot, u^\varepsilon (s), \mathcal{L}_{u^\varepsilon (s)} )ds
        \nonumber \\
  = & u_0
      + \int_0^t g(s, \cdot, u^\varepsilon (s), \mathcal{L}_{u^\varepsilon (s)}) ds
      + \sqrt{\varepsilon} \int_0^t \sigma(s, u^\varepsilon (s), \mathcal{L}_{u^\varepsilon (s)} ) dW(s)
                                    \quad \text{in} \  \left( V \bigcap L^p(\mathbb{R}^n) \right)^\ast,
 \end{align}
 $\mathbb{P}$-almost surely.

The following theorem is concerned
with the   well-posedness of
problem  \eqref{mvsde-2}-\eqref{mvsde-2 IV}.

\begin{theorem} \label{mvsde-wellposed}
  Suppose $({\bf \Sigma 1})$-$({\bf \Sigma 3})$ hold.
  If $T>0$, $\varepsilon \in (0,1)$, $u_0 \in L^2(\Omega, \mathscr{F}_0; H)$,
  then problem \eqref{mvsde-2}-\eqref{mvsde-2 IV} has a unique solution
         $u^\varepsilon$
  in the sense of Definition \ref{defsol},
  which satisfies the energy equation: for all $t\in [0,T]$,
 \begin{align}\label{wang0321}
    & \| u^\varepsilon (t) \|^2
      + 2 \int_0^t
                 \| (-\triangle)^{\frac{\alpha}{2}} u^\varepsilon(s)
                 \|^2 ds
      + 2 \int_0^t \int_{\mathbb{R} ^n}  f( s, x, u^\varepsilon (s), \mathcal{L}_{u^\varepsilon (s)} ) u^\varepsilon (s) dx ds
        \nonumber \\
  = & \| u_0 \|^2
      + 2 \int_0^t (g(s, u^\varepsilon (s), \mathcal{L}_{u^\varepsilon (s)} ),  u^\varepsilon (s) ) ds
      + \varepsilon \int_0^t \| \sigma(s, u^\varepsilon (s), \mathcal{L}_{u^\varepsilon (s)} ) \|_{L_2(l^2,H)}^2 ds  \nonumber\\
    & + 2 \sqrt{\varepsilon} \int_0^t
          \left( u^\varepsilon (s),
         \sigma(s, u^\varepsilon (s), \mathcal{L}_{u^\varepsilon (s)}) dW(s)
                                      \right), \ \ \    \mathbb{P} \text{-almost surely}.
 \end{align}
 Moreover, the following uniform estimates are valid:
 \begin{align*}
         & \| u^\varepsilon \|_{L^2(\Omega, C([0,T], H) )}^2
           + \| u^\varepsilon \|_{L^2(\Omega, L^2(0,T; V) )}^2
           + \| u^\varepsilon \|_{L^p(\Omega, L^p(0,T; L^p(\mathbb{R}^n ) ) )}^p
                 \nonumber\\
    \leq & M_1
              \left( 1 + \| u_0 \|_{L^2(\Omega, \mathscr{F}_0; H)}^2
              \right),
 \end{align*}
 where $M_1=M_1(T) > 0$ is independent of $u_0$ and $\varepsilon$.
\end{theorem}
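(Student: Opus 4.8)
The plan is to solve the McKean--Vlasov problem \eqref{mvsde-2}-\eqref{mvsde-2 IV} by the \emph{decoupling} (freezing-of-law) method: first solve, for each frozen flow of laws, a genuine distribution-independent SPDE in the variational framework, and then recover the McKean--Vlasov solution as the fixed point of the induced map on flows of measures. For Step~1, fix $\mu=(\mu_t)_{t\in[0,T]}\in C([0,T],\mathcal{P}_2(H))$ and replace $\mathcal{L}_{u^\varepsilon(t)}$ everywhere by $\mu_t$; I would solve the resulting equation in the Gelfand triple $V\cap L^p(\mathbb{R}^n)\hookrightarrow H\hookrightarrow (V\cap L^p(\mathbb{R}^n))^\ast$ by the standard monotone-operator method (Galerkin approximation, a priori bounds, weak limits, and identification of the limit via monotonicity). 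The structural hypotheses to verify are: hemicontinuity, from the continuity of $f$; weak monotonicity of $u\mapsto (-\triangle)^\alpha u+f(t,\cdot,u,\mu_t)$, which follows by pairing differences and using that $(-\triangle)^\alpha$ is monotone together with the one-sided bound \eqref{f-4}, so the $f$-difference is controlled below by $-\|\psi_4\|\,\|\cdot\|^2$; coercivity, from \eqref{f-1}, yielding simultaneous control of $\|(-\triangle)^{\alpha/2}\cdot\|^2$ and $\lambda_1\|\cdot\|_{L^p}^p$; and the growth/boundedness of the drift and diffusion, from \eqref{f-3}, \eqref{g-3}, \eqref{sigma3}. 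These give a unique $u^\mu$ in the space of Definition \ref{defsol}, for which the Itô energy identity \eqref{wang0321} (with $\mu_s$ in place of $\mathcal{L}_{u^\varepsilon(s)}$) holds by the Gelfand-triple Itô formula.

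For the a priori estimates I would apply Itô's formula to $\|u^\mu(t)\|^2$, take expectation so the martingale term drops, and use \eqref{f-1} to move $\lambda_1\|u\|_{L^p}^p$ and $\|(-\triangle)^{\alpha/2}u\|^2$ to the left-hand side. The $g$- and $\sigma$-contributions are then bounded via \eqref{g-3}, \eqref{sigma3} and Young's inequality, and the estimate closes by Gronwall; for the supremum-in-time bound I would apply the Burkholder--Davis--Gundy inequality to the stochastic integral. At the fixed point $\mu_s(\|\cdot\|^2)=\mathbb{E}\|u(s)\|^2$ is deterministic, so the law-dependent terms feed back consistently into the Gronwall argument and produce the stated bound with $M_1=M_1(T)$ independent of $u_0$ and $\varepsilon$.

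For the fixed point, define $\Phi(\mu)=(\mathcal{L}_{u^\mu(t)})_{t\in[0,T]}$; the moment estimate shows $\Phi(\mu)\in C([0,T],\mathcal{P}_2(H))$, continuity in time following from the $H$-continuity of $u^\mu$. For two frozen flows $\mu,\nu$ I would apply Itô to $\|u^\mu(t)-u^\nu(t)\|^2$: the $f$-difference splits into a state part, bounded below using \eqref{f-4}, and a law part, bounded by $\psi_3\,\mathbb{W}_2(\mu_t,\nu_t)$ via \eqref{f-2}, while the $g$- and $\sigma$-differences are handled by \eqref{g-2} and \eqref{sigmalip}. Using the coupling bound $\mathbb{W}_2^2(\mathcal{L}_{u^\mu(t)},\mathcal{L}_{u^\nu(t)})\le \mathbb{E}\|u^\mu(t)-u^\nu(t)\|^2$ and Gronwall gives
$$ \mathbb{E}\|u^\mu(t)-u^\nu(t)\|^2 \le C\int_0^t e^{C(t-s)}\,\mathbb{W}_2^2(\mu_s,\nu_s)\,ds . $$
A contraction then follows either on a short interval $[0,T_0]$ (iterated to cover $[0,T]$) or directly in the time-weighted metric $\sup_t e^{-\lambda t}\mathbb{W}_2(\mu_t,\nu_t)$ with $\lambda$ large; the unique fixed point is the required solution, and uniqueness for the full problem follows from the same difference estimate.

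The hard part I expect is Step~1 together with the self-consistency in the later steps. Because $f$ has arbitrary polynomial growth on the unbounded domain $\mathbb{R}^n$, one must carry the $L^p$-integrability through the $V\cap L^p$ duality carefully, since the $(-\triangle)^\alpha$ and $f$ terms a priori live in different dual spaces and only their sum lies in $(V\cap L^p)^\ast$. Moreover, because the coefficients depend on the law of the unknown itself, the a priori and contraction estimates close only once one exploits that $\mu_t(\|\cdot\|^2)=\mathbb{E}\|u(t)\|^2$ is deterministic and that the coupling inequality transfers the spatial $L^2$-estimate to the Wasserstein distance.
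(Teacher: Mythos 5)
Your proposal is correct and follows essentially the same route as the paper: freeze the flow of laws, solve the resulting distribution-free SPDE in the variational framework (the paper delegates this step to an existing well-posedness theorem for fractional reaction-diffusion equations on $\mathbb{R}^n$ rather than redoing the Galerkin/monotonicity argument), and then obtain the solution as the unique fixed point of $\mu \mapsto \mathcal{L}_{u_\mu^\varepsilon(\cdot)}$, proved contractive in the weighted metric $\sup_t e^{-\lambda t}\mathbb{W}_2(\mu(t),\nu(t))$ via It\^{o}'s formula, the coupling bound $\mathbb{W}_2^2(\mathcal{L}_{u_\mu^\varepsilon(t)},\mathcal{L}_{u_\nu^\varepsilon(t)})\le \mathbb{E}\|u_\mu^\varepsilon(t)-u_\nu^\varepsilon(t)\|^2$, and Gronwall. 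The a priori estimates are likewise derived exactly as you describe, using \eqref{f-1}, \eqref{g-3}, \eqref{sigma3}, the Burkholder--Davis--Gundy inequality and Gronwall.
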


\begin{proof}
    We first prove the existence
    and uniqueness of solutions,
    and then derive the uniform
    estimates of solutions.

 {\bf Step 1:}  existence and uniqueness of solutions.

Given
$u_0 \in L^2(\Omega, \mathscr{F}_0; H)$
and   $\mu  \in C( [0,T], \mathcal{P}_2(H) )$,
    consider the    stochastic equation:
\begin{align}\label{mvsde-3}
     & d u_\mu^\varepsilon(t) + (-\triangle)^\alpha u_\mu^\varepsilon(t) dt
         + f_\mu(t, \cdot, u_\mu^\varepsilon (t) ) dt
           = g_\mu(t, \cdot, u_\mu^\varepsilon (t) ) dt
          + \sqrt{\varepsilon}
                  \sigma_\mu(t, u_\mu^\varepsilon (t) ) dW(t),
\end{align}
 with initial condition: \begin{equation}\label{mvsde-3a}
    u_\mu^\varepsilon(0) = u_0,
    \end{equation}
 where
 $$ f_\mu (t, \cdot, u_\mu^\varepsilon (t)) = f(t, \cdot,  u_\mu^\varepsilon (t), \mu(t) ),\
    g_\mu (t, \cdot, u_\mu^\varepsilon (t)) = g(t, \cdot, u_\mu^\varepsilon (t), \mu(t) ),\
    \sigma_\mu (t, u_\mu^\varepsilon (t)) = \sigma(t, u_\mu^\varepsilon (t), \mu(t) ).
 $$
 By  $({\bf \Sigma 1})$-$({\bf \Sigma 3})$, we find that
   $f_\mu, g_\mu $ and $\sigma_\mu$
 satisfy all the conditions in \cite[Theorem 6.3]{wangb2019JDE},
 and hence problem  \eqref{mvsde-3}-\eqref{mvsde-3a} has a unique solution
  $$ u_\mu^\varepsilon
     \in L^2(\Omega, C([0,T], H) )
         \bigcap
         L^2(\Omega, L^2(0,T; V) )
         \bigcap
         L^p(\Omega, L^p(0,T; L^p(\mathbb{R}^n ) ) ),
  $$
 which satisfies
 the energy equation
 \eqref{wang0321}
 with
 $\mathcal{L}_{u^\varepsilon (s)}
 $ replaced by $\mu (s)$, and
 the
  uniform estimate:
 \begin{align*}
         & \| u_\mu^\varepsilon \|_{L^2(\Omega, C([0,T], H) )}^2
           + \| u_\mu^\varepsilon \|_{L^2(\Omega, L^2(0,T; V) )}^2
           + \| u_\mu^\varepsilon \|_{L^p(\Omega, L^p(0,T; L^p(\mathbb{R}^n ) ) )}^p
                 \nonumber\\
    \leq & M_{T,\mu}
              \left( 1 + \| u_0 \|_{L^2(\Omega, \mathscr{F}_0; H)}^2
              \right),
 \end{align*}
 where $M_{T,\mu} = M (T, \mu) > 0$ is independent of $u_0$ and $\varepsilon$.

 By $u_\mu^\varepsilon \in L^2(\Omega, C([0,T], H) )$ and the Lebesgue dominated convergence theorem,
 we see that $u_\mu^\varepsilon \in  C([0,T],L^2(\Omega, H) )$.
Note that
 \begin{align*}
    \mathbb{W}_2
                 \left( \mathcal{L}_{u_\mu^\varepsilon(t)},
                               \mathcal{L}_{u_\mu^\varepsilon(s)}
                 \right)
    \leq
         \left( \mathbb{E}
                      \left[ \| u_\mu^\varepsilon(t) - u_\mu^\varepsilon(s) \|^2
                      \right]
         \right)^{ \frac{1}{2} },
             \ \ \   \forall \  s,t \in [0,T],
 \end{align*}
and hence
    $\mathcal{L}_{u_\mu^\varepsilon(\cdot)} \in C( [0,T], \mathcal{P}_2(H) )$.

  Define a  map
    $\Phi^{u_0} : C( [0,T], \mathcal{P}_2(H) ) \rightarrow C( [0,T], \mathcal{P}_2(H) )$
  by
 \begin{align} \label{map}
     \Phi^{u_0} (\mu)(t) = \mathcal{L}_{u_\mu^\varepsilon(t)},
                          \ \ \
                                 t\in [0, T], \  \mu \in C( [0,T], \mathcal{P}_2(H) ),
 \end{align}
 where $u_\mu^\varepsilon$ is the solution of \eqref{mvsde-3}-\eqref{mvsde-3a}.

Next, we will prove the existence of
solutions of \eqref{mvsde-2}-\eqref{mvsde-2 IV}
by finding a fixed point
of   $\Phi^{u_0}$.
To that end, we need  to show
  $\Phi^{u_0}$ is a contractive map
 in the complete  metric  space
 $ (C( [0,T], \mathcal{P}_2(H) ),
 \ d_{C( [0,T], \mathcal{P}_2(H) )})$ where the metric
 $ d_{C( [0,T], \mathcal{P}_2(H) )}$ is defined  by
$$ d_{C( [0,T], \mathcal{P}_2(H) )} (\mu, \nu) = \sup_{t \in [0,T]} e^{-\lambda t} \mathbb{W}_2 \left( \mu(t), \nu(t) \right),
\quad \forall \ \mu, \nu \in C( [0,T], \mathcal{P}_2(H) ).
$$

 Let $\mu, \nu \in C( [0,T], \mathcal{P}_2(H) )$,
 and $u_\mu^\varepsilon$ and $u_\nu^\varepsilon$ be the solution of \eqref{mvsde-3}-\eqref{mvsde-3a}.
By It\^{o}'s formula, we have
\begin{align}\label{sol-1}
       & \| u_\mu^\varepsilon(t) - u_\nu^\varepsilon(t) \|^2
         + 2 \int_0^t
                  \| (-\triangle)^{\frac{\alpha}{2}}
                           \left( u_\mu^\varepsilon(s)
                                  - u_\nu^\varepsilon(s)
                           \right)
                  \|^2 ds  \nonumber \\
       & + 2 \int_0^t \int_{\mathbb{R}^n}
                   \left( f(s, x, u_\mu^\varepsilon (s) , \mu(s) )
                          - f(s, x, u_\nu^\varepsilon (s), \nu(s) )
                   \right)
                   \left( u_\mu^\varepsilon (s)
                          - u_\nu^\varepsilon (s)
                   \right) dx ds
                         \nonumber \\
     = & 2 \int_0^t
                   \left( g(s, \cdot, u_\mu^\varepsilon (s), \mu(s) )
                           - g(s, \cdot, u_\nu^\varepsilon (s), \nu(s) ),
                          u_\mu^\varepsilon (s)
                           - u_\nu^\varepsilon (s)
                   \right) ds
                             \nonumber \\
       & + 2 \sqrt{\varepsilon}
                \int_0^t
                  \left( u_\mu^\varepsilon (s)
                          - u_\nu^\varepsilon (s),
                         \left( \sigma(s, u_\mu^\varepsilon (s), \mu(s) )
                                - \sigma(s, u_\nu^\varepsilon (s), \nu(s) )
                         \right) dW(s)
                  \right)
                  \nonumber \\
       & + \varepsilon
                \int_0^t
                  \| \sigma(s, u_\mu^\varepsilon (s), \mu(s) )
                      - \sigma(s, u_\nu^\varepsilon (s), \nu(s) )
                  \|_{L_{2}(l^2,H)}^2 ds.
\end{align}
 Taking
 the  expectation on both sides
 of  \eqref{sol-1}, we obtain
\begin{align}\label{sol-2}
      & \mathbb{E}
          \left[ \| u_\mu^\varepsilon(t) - u_\nu^\varepsilon(t) \|^2
          \right]
        + 2 \int_0^t
                 \mathbb{E}
                         \left[
                                \| (-\triangle)^{\frac{\alpha}{2}}
                                    \left( u_\mu^\varepsilon(s)
                                           - u_\nu^\varepsilon(s)
                                    \right)
                                \|^2
                         \right] ds  \nonumber \\
      & + 2 \mathbb{E}
               \left[\int_0^t \int_{\mathbb{R}^n}
                      \left( f(s, x, u_\mu^\varepsilon (s) , \mu(s) )
                             - f(s, x, u_\nu^\varepsilon (s), \nu(s) )
                      \right)
                      \left( u_\mu^\varepsilon (s)
                             - u_\nu^\varepsilon (s)
                      \right) dx ds
               \right]
                         \nonumber \\
     = & 2 \mathbb{E}
              \left[\int_0^t
                      \left( g(s, \cdot, u_\mu^\varepsilon (s), \mu(s) )
                             - g(s, \cdot, u_\nu^\varepsilon (s), \nu(s) ),
                             u_\mu^\varepsilon (s)
                              - u_\nu^\varepsilon (s)
                      \right) ds
              \right]
                             \nonumber \\
       & + \varepsilon
                \mathbb{E}
                     \left[ \int_0^t
                                     \| \sigma(s, u_\mu^\varepsilon (s), \mu(s) )
                                         - \sigma(s, u_\nu^\varepsilon (s), \nu(s) )
                                     \|_{L_{2}(l^2,H)}^2 ds
                     \right].
\end{align}

 For the third term on left-hand side of \eqref{sol-2},
 by \eqref{f-2}, \eqref{f-4} and the H\"{o}lder inequality, we obtain
\begin{align}\label{sol-3}
       & 2 \mathbb{E}
              \left[\int_0^t \int_{\mathbb{R}^n}
                      \left( f(s, x, u_\mu^\varepsilon (s) , \mu(s) )
                             - f(s, x, u_\nu^\varepsilon (s), \nu(s) )
                      \right)
                      \left( u_\mu^\varepsilon (s)
                             - u_\nu^\varepsilon (s)
                      \right) dx ds
               \right]
               \nonumber \\
     = & 2 \mathbb{E}
              \left[\int_0^t \int_{\mathbb{R}^n}
                      \left( f(s, x, u_\mu^\varepsilon (s) , \mu(s) )
                             - f(s, x, u_\nu^\varepsilon (s), \mu(s) )
                      \right)
                      \left( u_\mu^\varepsilon (s)
                             - u_\nu^\varepsilon (s)
                      \right) dx ds
               \right]
               \nonumber \\
       & + 2 \mathbb{E}
              \left[\int_0^t \int_{\mathbb{R}^n}
                      \left( f(s, x, u_\nu^\varepsilon (s) , \mu(s) )
                             - f(s, x, u_\nu^\varepsilon (s), \nu(s) )
                      \right)
                      \left( u_\mu^\varepsilon (s)
                             - u_\nu^\varepsilon (s)
                      \right) dx ds
               \right]
               \nonumber \\
  \geq & - 2 \| \psi_4 \|_{L^\infty(0,T; L^\infty(\mathbb{R}^n) ) }
           \int_0^t \mathbb{E}
                      \left[ \| u_\mu^\varepsilon (s)
                             - u_\nu^\varepsilon (s) \|^2
                      \right] ds  \nonumber \\
       & - 2 \mathbb{E}
              \left[ \int_0^t \int_{\mathbb{R}^n}
                        |\psi_3(s,x)| \
                       \mathbb{W}_2 \left( \mu(s), \nu(s) \right) \
                       \left| u_\mu^\varepsilon (s)
                              - u_\nu^\varepsilon (s)
                       \right| dx ds
               \right]
               \nonumber \\
  \geq & - 2 \| \psi_4 \|_{L^\infty(0,T; L^\infty(\mathbb{R}^n) ) }
           \int_0^t \mathbb{E}
                      \left[ \| u_\mu^\varepsilon (s)
                             - u_\nu^\varepsilon (s) \|^2
                      \right] ds  \nonumber \\
       & - 2 \mathbb{E}
              \left[ \int_0^t
                      \mathbb{W}_2 \left( \mu(s), \nu(s) \right) \
                        \|\psi_3(s)\| \
                        \| u_\mu^\varepsilon (s)
                              - u_\nu^\varepsilon (s)
                       \| ds
               \right]
               \nonumber \\
  \geq & - 2 \| \psi_4 \|_{L^\infty(0,T; L^\infty(\mathbb{R}^n) ) }
           \int_0^t \mathbb{E}
                      \left[ \| u_\mu^\varepsilon (s)
                             - u_\nu^\varepsilon (s) \|^2
                      \right] ds  \nonumber \\
       & - \int_0^t \| \psi_3(s) \|^2
                    \mathbb{E}
                      \left[ \| u_\mu^\varepsilon (s)
                             - u_\nu^\varepsilon (s) \|^2
                      \right] ds
         - \int_0^t
                    \left( \mathbb{W}_2 \left( \mu(s), \nu(s) \right)
                    \right)^2
            ds.
\end{align}

 For the first term on right-hand side of \eqref{sol-2},
 by \eqref{g-2} and the H\"{o}lder inequality, we obtain
\begin{align} \label{sol-4}
       & 2 \mathbb{E}
              \left[ \int_0^t
                      \left( g(s, \cdot, u_\mu^\varepsilon (s), \mu(s) )
                             - g(s, \cdot, u_\nu^\varepsilon (s), \nu(s) ),
                             u_\mu^\varepsilon (s)
                              - u_\nu^\varepsilon (s)
                      \right) ds
              \right]  \nonumber \\
  \leq & 2 \mathbb{E}
              \left[ \int_0^t \int_{\mathbb{R}^n}
                      \left| g(s, x, u_\mu^\varepsilon (s), \mu(s) )
                             - g(s, x, u_\nu^\varepsilon (s), \nu(s) )
                      \right|
                      \left| u_\mu^\varepsilon (s)
                              - u_\nu^\varepsilon (s)
                      \right| dx ds
              \right]  \nonumber \\
  \leq & 2 \mathbb{E}
              \left[ \int_0^t \int_{\mathbb{R}^n}
                                     \psi_g(s,x)
                                     \left( | u_\mu^\varepsilon (s) - u_\nu^\varepsilon (s) |
                                             + \mathbb{W}_2 \left( \mu(s), \nu(s) \right)
                                     \right)
                                     | u_\mu^\varepsilon (s) - u_\nu^\varepsilon (s) |
                       dx ds
              \right]  \nonumber \\
  \leq & 2 \| \psi_g \|_{L^\infty(0,T; L^\infty(\mathbb{R}^n) ) }
         \mathbb{E}
              \left[ \int_0^t
                              \| u_\mu^\varepsilon (s) - u_\nu^\varepsilon (s) \|^2
                      ds
              \right]   \nonumber \\
       & + 2 \mathbb{E}
              \left[
                     \int_0^t \mathbb{W}_2 \left( \mu(s), \nu(s) \right)
                                     \| \psi_g(s) \|
                                     \| u_\mu^\varepsilon (s) - u_\nu^\varepsilon (s) \|
                      ds
              \right]  \nonumber \\
  \leq & 2 \| \psi_g \|_{L^\infty(0,T; L^\infty(\mathbb{R}^n) ) }
         \mathbb{E}
              \left[ \int_0^t
                              \| u_\mu^\varepsilon (s) - u_\nu^\varepsilon (s) \|^2
                      ds
              \right]   \nonumber \\
       & + \int_0^t \| \psi_g(s) \|^2
                   \mathbb{E}
                             \left[
                                    \| u_\mu^\varepsilon (s) - u_\nu^\varepsilon (s) \|^2
                             \right]
            ds
         + \int_0^t
                    \left( \mathbb{W}_2 \left( \mu(s), \nu(s) \right)
                    \right)^2
            ds.
\end{align}

 For the second term on right-hand side of \eqref{sol-2},
 by \eqref{sigmalip}, we obtain that for any $\varepsilon \in (0,1)$,
\begin{align} \label{sol-5}
       & \varepsilon
                \mathbb{E}
                     \left[ \int_0^t
                                     \| \sigma(s, u_\mu^\varepsilon (s), \mu(s) )
                                         - \sigma(s, u_\nu^\varepsilon (s), \nu(s) )
                                     \|_{L_{2}(l^2,H)}^2 ds
                     \right]  \nonumber \\
  \leq & L_\sigma
                \int_0^t \mathbb{E}
                               \left[
                                     \| u_\mu^\varepsilon (s)
                                         - u_\nu^\varepsilon (s)
                                     \|^2
                               \right] ds
         + L_\sigma
                  \int_0^t
                           \left( \mathbb{W}_2 \left( \mu(s), \nu(s) \right)
                           \right)^2
                    ds.
\end{align}

 It follows from \eqref{sol-2}-\eqref{sol-5} that
 \begin{align}\label{sol-6}
      & \mathbb{E}
          \left[ \| u_\mu^\varepsilon(t) - u_\nu^\varepsilon(t) \|^2
          \right]
        + 2 \int_0^t
                 \mathbb{E}
                         \left[
                                \| (-\triangle)^{\frac{\alpha}{2}}
                                    \left( u_\mu^\varepsilon(s)
                                           - u_\nu^\varepsilon(s)
                                    \right)
                                \|^2
                         \right] ds  \nonumber \\
  \leq & \left( 2 \| \psi_4 \|_{L^\infty(0,T; L^\infty(\mathbb{R}^n) ) }
                + 2 \| \psi_g \|_{L^\infty(0,T; L^\infty(\mathbb{R}^n) ) }
                + L_\sigma
         \right)
           \int_0^t \mathbb{E}
                      \left[ \| u_\mu^\varepsilon (s)
                             - u_\nu^\varepsilon (s) \|^2
                      \right] ds  \nonumber \\
       & + \int_0^t
                   \left( \| \psi_3(s) \|^2
                          + \| \psi_g(s) \|^2
                   \right)
                    \mathbb{E}
                      \left[ \| u_\mu^\varepsilon (s)
                             - u_\nu^\varepsilon (s) \|^2
                      \right] ds
         + (2 +
          L_\sigma)
           \int_0^t
                    \left( \mathbb{W}_2 \left( \mu(s), \nu(s) \right)
                    \right)^2
            ds  \nonumber \\
     = & \int_0^t
                   \left( c_1
                          + \| \psi_3(s) \|^2
                          + \| \psi_g(s) \|^2
                   \right)
                    \mathbb{E}
                      \left[ \| u_\mu^\varepsilon (s)
                             - u_\nu^\varepsilon (s) \|^2
                      \right] ds  \nonumber \\
      & + (2 + L_\sigma)
           \int_0^t
                    \left( \mathbb{W}_2 \left( \mu(s), \nu(s) \right)
                    \right)^2
            ds,
\end{align}
 where
       $
          c_1 = 2 \| \psi_4 \|_{L^\infty(0,T; L^\infty(\mathbb{R}^n) ) }
                + 2 \| \psi_g \|_{L^\infty(0,T; L^\infty(\mathbb{R}^n) ) }
                + L_\sigma
              > 0.
       $

 Applying the Gronwall inequality to \eqref{sol-6}, we
 obtain  that for any $0 \leq   t \leq T$,
\begin{align} \label{sol-9}
      & \mathbb{E}
          \left[ \| u_\mu^\varepsilon(t) - u_\nu^\varepsilon(t) \|^2
          \right]
           \nonumber \\
  \leq & (2 + L_\sigma) \int_0^t
                   \left( \mathbb{W}_2 \left( \mu(s), \nu(s) \right)
                   \right)^2
            ds \cdot
            e^{\int_0^T
                   \left( c_1
                          + \| \psi_3(s) \|^2
                          + \| \psi_g(s) \|^2
                   \right) ds
               }   \nonumber \\
     = & C_T \int_0^t
                   \left( \mathbb{W}_2 \left( \mu(s), \nu(s) \right)
                   \right)^2
            ds,
\end{align}
 where $C_T = (2 + L_\sigma)
             e^{\int_0^T
                   \left( c_1
                          + \| \psi_3(s) \|^2
                          + \| \psi_g(s) \|^2
                   \right) ds
               }
          > 0$.
  By \eqref{sol-9}, we obtain,
  for  $t \in [0,T]$,
\begin{align}\label{sol-10}
      e^{- 2\lambda t}
            \mathbb{E}
            \left[ \| u_\mu^\varepsilon(t) - u_\nu^\varepsilon(t) \|^2
            \right]
    \leq & C_T \int_0^t e^{- 2\lambda (t-s)} e^{- 2\lambda s}
                    \left( \mathbb{W}_2 \left( \mu(s), \nu(s) \right)
                    \right)^2
             ds  \nonumber \\
    \leq & \frac{C_T }{2\lambda} \sup_{s \in [0,T]} e^{-2\lambda s}
                    \left( \mathbb{W}_2 \left( \mu(s), \nu(s) \right)
                    \right)^2.
\end{align}
 By \eqref{sol-10} and the definition of $\mathbb{W}_2(\cdot,\cdot)$,
 we have
\begin{align}\label{sol-11}
            e^{- 2\lambda t}
              \mathbb{W}_2
                         \left( \mathcal{L}_{u_\mu^\varepsilon(t)}, \mathcal{L}_{u_\nu^\varepsilon(t)}
                         \right)^2
    \leq \frac{C_T }{2\lambda} \sup_{s \in [0,T]} e^{- 2\lambda s}
                    \left( \mathbb{W}_2 \left( \mu(s), \nu(s) \right)
                    \right)^2
                    \ \  \text{for all} \  t \in [0,T].
\end{align}
   By \eqref{sol-11} and the definition of $d_{C( [0,T], \mathcal{P}_2(H) )}$, we
   obtain
\begin{align}\label{sol-12}
          d_{C( [0,T], \mathcal{P}_2(H) )}
              \left( \mathcal{L}_{u_\mu^\varepsilon},
                     \mathcal{L}_{u_\nu^\varepsilon}
              \right)
    \leq  \left( \frac{C_T}{2\lambda} \right)^{\frac{1}{2}}
          d_{C( [0,T], \mathcal{P}_2(H) )} \left( \mu, \nu \right).
\end{align}

 Choosing $\lambda > 0$ such that $\frac{C_T}{2\lambda} < \frac{1}{4}$,
  by \eqref{sol-12} and the definition of $\Phi^{u_0}$,
 we obtain
\begin{align*}
         {d_{C( [0,T], \mathcal{P}_2(H) )}}
           \left( \Phi^{u_0} (\mu),
                  \Phi^{u_0} (\nu)
           \right)
   \leq \frac{1}{2}
    {d_{C( [0,T], \mathcal{P}_2(H) )}}
   \left( \mu, \nu \right),
   \quad \forall \ \mu, \nu \in
    {C( [0,T], \mathcal{P}_2(H) )},
\end{align*}
 which shows $\Phi^{u_0}$ is a  contractive map in
 ${C( [0,T], \mathcal{P}_2(H) )}
 $ with metric
$
 d_{C( [0,T], \mathcal{P}_2(H) )}
 )$.
 Therefore,
  $\Phi^{u_0}$ has a unique fixed point $\bar{\mu} \in
  {C( [0,T], \mathcal{P}_2(H) )}
 $.
 Then,
    $u_{\bar{\mu}}^\varepsilon
    $
 is a solution of
 \eqref{mvsde-2}-\eqref{mvsde-2 IV}   on $[0,T]$.

 On the other hand, for every solution $u^\varepsilon$ of
 \eqref{mvsde-2}-\eqref{mvsde-2 IV},
 if we set
   $\mu(t) = \mathcal{L}_{u^\varepsilon(t)}$
   for $t\in [0,T]$,
   then  $\mu$
   is a  fixed point of $\Phi^{u_0}$ in
   $
{C( [0,T], \mathcal{P}_2(H) )}
$, which along with the uniqueness
of fixed points of
$\Phi^{u_0}$ and
\eqref{sol-9}
implies the uniqueness of solutions
of
 \eqref{mvsde-2}- \eqref{mvsde-2 IV}.

 {\bf Step 2:}  uniform estimates of solutions  of    \eqref{mvsde-2}- \eqref{mvsde-2 IV}.

 By It\^{o}'s formula,
 we have
\begin{align}\label{sue-1}
 & \| u^\varepsilon(t) \|^2
  + 2 \int_0^t \|(-\triangle)^{\frac{\alpha}{2}} u^\varepsilon(s) \|^2 ds
  + 2 \int_0^t \int_{ \mathbb{R}^n}
   f(s, x, u^\varepsilon(s), \mathcal{L}_{u^\varepsilon(s)} )  u^\varepsilon(s)  dx  ds \no\\
  = & \| u_0 \|^2
      + 2 \int_0^t \left( g(s, \cdot, u^\varepsilon(s), \mathcal{L}_{u^\varepsilon(s)} ),\ u^\varepsilon(s) \right) ds
      + \int_0^t \| \sigma (s, u^\varepsilon(s), \mathcal{L}_{u^\varepsilon(s)} ) \|_{L_2(l^2,H)}^2 ds \no\\
    & + 2 \int_0^t \left( u^\varepsilon(s),\  \sigma(s, u^\varepsilon(s), \mathcal{L}_{u^\varepsilon(s)} ) dW(s) \right),
     \ \ \  \text{for all} \  t \in [0,T].
\end{align}

 For the third
  term on the left-hand side of \eqref{sue-1},
 by \eqref{f-1}, we have
\begin{align} \label{sue-2}
      & 2 \int_0^t \int_{ \mathbb{R}^n}
        f(s, x, u^\varepsilon(s), \mathcal{L}_{u^\varepsilon(s)} )  u^\varepsilon(s) dx ds
           \nonumber \\
 \geq & 2 \lambda_1 \int_0^t \| u^\varepsilon (s) \|_{L^p(\mathbb{R}^n)}^p ds
        - 2 \int_0^t \| \psi_1(s) \|_{L^1(\mathbb{R}^n)} \left( 1 + \mathcal{L}_{u^\varepsilon(s)}
        (\|\cdot\|^2) \right) ds
         \nonumber \\
          & -2
        \| \psi_1 \|_{
            L^\infty (0,T;
         L^\infty (\mathbb{R}^n) )}
     \int_0^t
      \| u^\varepsilon (s) \|^2 ds.
 \end{align}

 For the second term on the right-hand side of \eqref{sue-1}, by \eqref{g-3}
  we obtain
\begin{align} \label{sue-3}
        & 2 \int_0^t \left( g(s, \cdot, u^\varepsilon(s), \mathcal{L}_{u^\varepsilon(s)} ),\ u^\varepsilon(s) \right) ds  \nonumber \\
         \leq &
         2\int_0^t
         \int_{\mathbb{R}^n}
         \psi_g (t,x)
         \left (
         |u^\varepsilon(s)|
         + |u^\varepsilon(s)^2
         +
         |u^\varepsilon(s)|
         \sqrt{\mathcal{L}_{u^\varepsilon(s)}
         (\|\cdot\|^2)
        }
         \right ) dx ds
         \nonumber \\
         \leq &
         4 \|\psi_g\|_{L^\infty(0,T; L^\infty(\mathbb{R}^n) )}
         \int_0^t
         \|  u^\varepsilon(s) \|^2 ds
         +
         \int_0^t
         \| \psi_g (s) \|_{L^1(\mathbb{R}^n)}
         (1+  \mathcal{L}_{u^\varepsilon(s)}
         (\| \cdot\|^2)
         )ds.
 \end{align}
 Then by \eqref{sue-1},
 \eqref{sue-2} and
 \eqref{sue-3}, we get
 \begin{align}\label{sue-4}
     & \| u^\varepsilon(t) \|^2
       + 2 \int_0^t \| (-\triangle)^{\frac{\alpha}{2}} u^\varepsilon(s) \|^2 ds
       +2 \lambda_1 \int_0^t \| u^\varepsilon (s) \|^p_{L^p} ds  \nonumber \\
\leq & \| u_0 \|^2
       +  \int_0^t
       ( 2\| \psi_1(s) \|_{L^1(\mathbb{R}^n)}
       +\| \psi_g(s) \|_{L^1(\mathbb{R}^n)}
       )
        \left( 1 + \mathcal{L}_{u^\varepsilon(s)}(\|\cdot\|^2) \right) ds  \nonumber \\
     &
       +  2 \left(
       \|\psi_1\|_{L^\infty(0,T; L^\infty(\mathbb{R}^n))}
        + 2 \|\psi_g\|_{L^\infty(0,T; L^\infty(\mathbb{R}^n))} \right) \int_0^t \| u^\varepsilon(s) \|^2 ds  \nonumber \\
     & + \int_0^t \| \sigma(s, u^\varepsilon(s), \mathcal{L}_{u^\varepsilon(s)} ) \|_{L_2(l^2,H)}^2 ds
       + 2 \int_0^t \left( u^\varepsilon(s),\ \sigma(s, u^\varepsilon(s), \mathcal{L}_{u^\varepsilon(s)} )dW(s) \right).
\end{align}
By \eqref{sue-4}, we obtain
for all $t\in [0,T]$,
\begin{align}\label{sue-5}
    & \mathbb{E} \left [ \sup_{0 \leq r\leq t}
    \left (\| u^\varepsilon(r) \|^2
    + 2 \int_0^r \| (-\triangle)^{\frac{\alpha}{2}} u^\varepsilon(s) \|^2 ds
    +2 \lambda_1 \int_0^r \| u^\varepsilon (s) \|^p_{L^p} ds
    \right )
    \right ] \nonumber \\
    \leq &
    \mathbb{E} \left (\| u_0 \|^2
    \right )
    +  \int_0^t
    ( 2\| \psi_1(s) \|_{L^1(\mathbb{R}^n)}
    +\| \psi_g(s) \|_{L^1(\mathbb{R}^n)}
    )
    \left( 1 + \mathcal{L}_{u^\varepsilon(s)}
    (\|\cdot\|^2) \right) ds  \nonumber \\
    &
    +  2 \left(
    \|\psi_1\|_{L^\infty(0,T; L^\infty(\mathbb{R}^n))}
    + 2 \|\psi_g\|_{L^\infty(0,T; L^\infty(\mathbb{R}^n))} \right) \int_0^t
    \mathbb{E} \left (
     \| u^\varepsilon(s) \|^2
     \right )  ds  \nonumber \\
    & +
    \mathbb{E} \left (
    \int_0^t \| \sigma(s, u^\varepsilon(s), \mathcal{L}_{u^\varepsilon(s)} ) \|_{L_2(l^2,H)}^2 ds
    \right ) \nonumber \\
    &
    + 2
    \mathbb{E} \left (
    \sup_{0\le r \le t}
     \int_0^r \left( u^\varepsilon(s),\ \sigma(s, u^\varepsilon(s), \mathcal{L}_{u^\varepsilon(s)} )dW(s) \right)
     \right ).
\end{align}

 For the last two terms on the right-hand side of \eqref{sue-5},
 by the Burkholder-Davis-Gundy inequality and \eqref{sigma3},
 we have for all $ t \in [0,T]$,
\begin{align} \label{sue-6}
       & \mathbb{E}
                       \left[ \int_0^t \| \sigma(s, u^\varepsilon(s), \mathcal{L}_{u^\varepsilon(s)} ) \|_{L_2(l^2,H)}^2 ds
                       \right]  \nonumber \\
       & + 2 \mathbb{E}
                        \left[ \sup_{
                            0\le r \le t  }
                                \left| \int_0^r
                                                \left( u^\varepsilon(s),\
                                                       \sigma(s, u^\varepsilon(s), \mathcal{L}_{u^\varepsilon(s)} ) dW(s)
                                                \right)
                                \right|
                        \right]  \nonumber \\
  \leq & \frac{1}{2} \mathbb{E} \left[ \sup_{0 \leq s \leq t} \|u^\varepsilon (s)\|^2 \right]
            + (1+2c^2) \mathbb{E}
                                 \left[ \int_0^t \| \sigma(s, u^\varepsilon (s), \mathcal{L}_{u^\varepsilon(s)} ) \|_{L_2(l^2,H)}^2 ds
                                 \right]  \nonumber \\
  \leq & \frac{1}{2} \mathbb{E}
                                \left[ \sup_{0 \leq s \leq t} \|u^\varepsilon(s)\|^2
                                \right]
         + (1+2c^2) M_{\sigma,T}
                                \mathbb{E}
                                 \left[ \int_0^t
                                              \left( 1 + \|u^\varepsilon(s)\|^2
                                                       + \mathcal{L}_{u^\varepsilon(s)}(\|\cdot\|^2)
                                              \right)
                                        ds
                                 \right],
  \end{align}
  where $c$ is a positive constant from the Burkholder-Davis-Gundy inequality.

 Then by \eqref{sue-5}-\eqref{sue-6}, we obtain
 for all $t\in [0,T]$,
 \begin{align}\label{sue-7}
    & \mathbb{E} \left [ \sup_{0 \leq r\leq t}
    \left (\| u^\varepsilon(r) \|^2
    + 2 \int_0^r \| (-\triangle)^{\frac{\alpha}{2}} u^\varepsilon(s) \|^2 ds
    +2 \lambda_1 \int_0^r \| u^\varepsilon (s) \|^p_{L^p} ds
    \right )
    \right ] \nonumber \\
    \leq &
 2  \mathbb{E} \left (\| u_0 \|^2
    \right )
    + 2 \int_0^t
    ( 2\| \psi_1(s) \|_{L^1(\mathbb{R}^n)}
    +\| \psi_g(s) \|_{L^1(\mathbb{R}^n)}
    )
    \left( 1 + \mathcal{L}_{u^\varepsilon(s)}
    (\|\cdot\|^2) \right) ds  \nonumber \\
    &
    +  4 \left(
    \|\psi_1\|_{L^\infty(0,T; L^\infty(\mathbb{R}^n))}
    + 2 \|\psi_g\|_{L^\infty(0,T; L^\infty(\mathbb{R}^n))} \right) \int_0^t
    \mathbb{E} \left (
    \| u^\varepsilon(s) \|^2
    \right )  ds  \nonumber \\
    &
    + 2(1+2c^2) M_{\sigma,T}
    \mathbb{E}
    \left[ \int_0^t
    \left( 1 + \|u^\varepsilon(s)\|^2
    + \mathcal{L}_{u^\varepsilon(s)}(\|\cdot\|^2)
    \right)
    ds
    \right]
     \nonumber \\
    = &
    2   \mathbb{E} \left (\| u_0 \|^2
    \right )
    + 2 \int_0^t
    ( 2\| \psi_1(s) \|_{L^1(\mathbb{R}^n)}
    +\| \psi_g(s) \|_{L^1(\mathbb{R}^n)}
    )
    \left( 1 +
        \mathbb{E} \left (
    \| u^\varepsilon(s) \|^2
    \right )
     \right) ds  \nonumber \\
    &
    +  4 \left(
    \|\psi_1\|_{L^\infty(0,T; L^\infty(\mathbb{R}^n))}
    + 2 \|\psi_g\|_{L^\infty(0,T; L^\infty(\mathbb{R}^n))} \right) \int_0^t
    \mathbb{E} \left (
    \| u^\varepsilon(s) \|^2
    \right )  ds  \nonumber \\
    &
    + 2(1+2c^2) M_{\sigma,T}
      \int_0^t
    \left( 1 + 2
    \mathbb{E} (
    \|u^\varepsilon(s)\|^2
    )
    \right)
    ds
    \nonumber \\
    \le  &
    2   \mathbb{E} \left (\| u_0 \|^2
    \right )
    +c_1
    +  \int_0^t
    (c_2+  4\| \psi_1(s) \|_{L^1(\mathbb{R}^n)}
    +2\| \psi_g(s) \|_{L^1(\mathbb{R}^n)}
    )
    \mathbb{E} \left (
    \sup_{0\le r\le s}
    \| u^\varepsilon(r) \|^2
    \right) ds  ,
 \end{align}
 where
  \begin{align*}
 c_1 =   4 \| \psi_1 \|_{L^1(0,T; L^1(\mathbb{R}^n) ) }
         + 2 \| \psi_g \|_{L^1(0,T; L^1(\mathbb{R}^n) ) }
          + 2(1+2c^2) M_{\sigma,T}T,
 \end{align*}
and
 \begin{align*}
    c_2 =   4 \| \psi_1 \|_{L^\infty (0,T; L^\infty (\mathbb{R}^n) ) }
    + 8 \| \psi_g \|_{L^\infty (0,T; L^\infty (\mathbb{R}^n) ) }
    + 4(1+2c^2) M_{\sigma,T}.
\end{align*}
 From \eqref{sue-7} and Gronwall's inequality, it follows that
 for all $t\in [0, T]$,
\begin{align*}
        &  \mathbb{E} \left [ \sup_{0 \leq r\leq t}
    \left (\| u^\varepsilon(r) \|^2
    + 2 \int_0^r \| (-\triangle)^{\frac{\alpha}{2}} u^\varepsilon(s) \|^2 ds
    +2 \lambda_1 \int_0^r \| u^\varepsilon (s) \|^p_{L^p} ds
    \right )
    \right ] \nonumber \\
  & \leq
        \left( 2 \mathbb{E} \left[ \| u_0 \|^2 \right]
               + c_1
        \right)
         e^{   \int_0^t
          \left( c_2  + 4 \| \psi_1(s)
                         \|_{L^1
                        (\mathbb{R}^n)}
                +2 \| \psi_g(s) \|_{L^1(\mathbb{R}^n)
                    }
                         \right) ds
           },
\end{align*}
   which together with
   \eqref{sue-1}
   completes the proof.
\end{proof}

\section{Measure-dependent controlled   equations on $\mathbb{R}^n$  }

In this section, we consider a measure-dependent  controlled equation corresponding to \eqref{mvsde-2},
which is useful
for  investigating
the LDP of the
fractional McKean-Vlasov stochastic reaction-diffusion equations
by the  weak convergence method.
We first discuss the well-posedness
and then derive
the uniform estimates of solutions
for the controlled equation.

\subsection{Well-posedness of controlled equations}

 If  $\varepsilon = 0$,
 then \eqref{mvsde-2} reduces to  the deterministic reaction-diffusion equation on $\mathbb{R}^n$:
\begin{align}\label{dde}
     \frac{\partial u^0(t)}{\partial t}
      + (-\triangle)^\alpha u^0(t)
      + f(t, \cdot,  u^0(t), \mathcal{L}_{u^0(t)} )
    = g(t, \cdot, u^0(t), \mathcal{L}_{u^0(t)} ),
\end{align}
 with initial data
\begin{align} \label{dde IV}
    u^0(0) = u_0,
\end{align}
 where the distribution
 $\mathcal{L}_{u^0(t)}$
 of ${u^0(t)}$  is the Dirac measure
 $\delta_{u^0(t)}$.

Similar to
 Theorem \ref{mvsde-wellposed},
 problem   \eqref{dde}-\eqref{dde IV}
 is well-posed in $H$ as stated below.

\begin{theorem} \label{dde-wellposed}
  Suppose $({\bf \Sigma 1})$-$({\bf \Sigma 2})$ hold.
  If $T>0$, $u_0 \in H$,
  then problem \eqref{dde}-\eqref{dde IV} has a unique solution
    $u^0 \in C([0,T], H)
             \bigcap
             L^2(0,T; V)
             \bigcap
             L^p(0,T; L^p(\mathbb{R}^n ) )$.
 Moreover, the following uniform estimates are valid:
 \begin{align*}
           \| u^0 \|_{C([0,T], H)}^2
           + \| u^0 \|_{L^2(0,T; V)}^2
           + \| u^0 \|_{L^p(0,T; L^p(\mathbb{R}^n ) )}^p
      \leq M_0 \left( 1 + \| u_0 \|^2 \right),
 \end{align*}
 where $M_0 = M_0(T) > 0$ is independent of   $u_0$.
\end{theorem}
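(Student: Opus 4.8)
The plan is to mirror the decoupling (freezing-the-distribution) argument used for Theorem \ref{mvsde-wellposed}, exploiting the fact that for a deterministic trajectory the law $\mathcal{L}_{u^0(t)}$ is the Dirac mass $\delta_{u^0(t)}$, so that $\mathbb{W}_2(\delta_a,\delta_b)=\|a-b\|$ for all $a,b\in H$. This collapse turns every Wasserstein term into an $H$-norm and removes all probabilistic ingredients (It\^o's formula, the Burkholder--Davis--Gundy inequality, and the expectations $\mathbb{E}[\cdot]$), so the whole argument becomes a strictly simpler, pathwise transcription of the one already carried out in Theorem \ref{mvsde-wellposed}.

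First I would set up the frozen equation: given $\mu\in C([0,T],\mathcal{P}_2(H))$, consider
\begin{align*}
 \frac{\partial u^0_\mu(t)}{\partial t}+(-\triangle)^\alpha u^0_\mu(t)+f(t,\cdot,u^0_\mu(t),\mu(t))=g(t,\cdot,u^0_\mu(t),\mu(t)),\qquad u^0_\mu(0)=u_0.
\end{align*}
By $({\bf \Sigma 1})$--$({\bf \Sigma 2})$ the frozen drift $f_\mu(t,\cdot,u)=f(t,\cdot,u,\mu(t))$ is coercive of order $p$ (by \eqref{f-1}), one-sided Lipschitz (by \eqref{f-4}), and of polynomial growth $p-1$ (by \eqref{f-3}), while $g_\mu$ is Lipschitz with an $L^1\cap L^\infty$ forcing coming from \eqref{g-3}. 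These are exactly the hypotheses under which the deterministic parabolic problem (the $\varepsilon=0$ reduction of \cite[Theorem 6.3]{wangb2019JDE}, equivalently standard monotone-operator/Galerkin theory) admits a unique $u^0_\mu\in C([0,T],H)\cap L^2(0,T;V)\cap L^p(0,T;L^p(\mathbb{R}^n))$ together with a $u_0$-uniform energy bound. Since $u^0_\mu\in C([0,T],H)$ and $\mathbb{W}_2(\delta_{u^0_\mu(t)},\delta_{u^0_\mu(s)})=\|u^0_\mu(t)-u^0_\mu(s)\|$, the curve $t\mapsto\delta_{u^0_\mu(t)}$ lies in $C([0,T],\mathcal{P}_2(H))$, so the map $\Phi^{u_0}(\mu)(t)=\mathcal{L}_{u^0_\mu(t)}=\delta_{u^0_\mu(t)}$ is well defined on $C([0,T],\mathcal{P}_2(H))$.

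The core step is the contraction estimate. Given $\mu,\nu$, I would test the equation satisfied by $u^0_\mu-u^0_\nu$ against $u^0_\mu-u^0_\nu$ to obtain the pathwise analogue of \eqref{sol-2} (no expectation, no stochastic term). Splitting the drift difference as in \eqref{sol-3} and using \eqref{f-4} to bound $f(\cdot,u^0_\mu,\mu)-f(\cdot,u^0_\nu,\mu)$ from below and \eqref{f-2} together with the factor $\mathbb{W}_2(\mu(s),\nu(s))$ for $f(\cdot,u^0_\nu,\mu)-f(\cdot,u^0_\nu,\nu)$, then bounding the $g$-term via \eqref{g-2} exactly as in \eqref{sol-4}, and applying Young's inequality, the dissipative terms $\|(-\triangle)^{\alpha/2}(\cdot)\|^2$ are absorbed on the left and Gronwall's inequality yields
\begin{align*}
 \|u^0_\mu(t)-u^0_\nu(t)\|^2\leq C_T\int_0^t\bigl(\mathbb{W}_2(\mu(s),\nu(s))\bigr)^2\,ds,
\end{align*}
the deterministic counterpart of \eqref{sol-9}. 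Multiplying by $e^{-2\lambda t}$ and optimizing $\lambda$ as in \eqref{sol-10}--\eqref{sol-12} gives $d_{C([0,T],\mathcal{P}_2(H))}(\Phi^{u_0}(\mu),\Phi^{u_0}(\nu))\leq\tfrac12 d_{C([0,T],\mathcal{P}_2(H))}(\mu,\nu)$, so $\Phi^{u_0}$ is a contraction on the complete metric space $(C([0,T],\mathcal{P}_2(H)),d_{C([0,T],\mathcal{P}_2(H))})$; its unique fixed point $\bar\mu$ furnishes the solution $u^0=u^0_{\bar\mu}$, and conversely any solution yields a fixed point, giving uniqueness.

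Finally, for the uniform estimate I would test \eqref{dde} against $u^0$ itself, using the coercivity \eqref{f-1} (with $\mathcal{L}_{u^0(s)}(\|\cdot\|^2)=\|u^0(s)\|^2$) to produce the good term $2\lambda_1\|u^0\|_{L^p}^p$ and the $L^1$/$L^\infty$ parts of $\psi_1$, and \eqref{g-3} for the $g$-term, paralleling \eqref{sue-2}--\eqref{sue-4} but without the stochastic and BDG contributions; Gronwall's inequality then closes the bound as in Step~2 of Theorem \ref{mvsde-wellposed}. I expect the only genuinely non-routine ingredient to be the well-posedness of the frozen equation under the arbitrary polynomial growth $p-1$ of $f$ on the unbounded domain $\mathbb{R}^n$; once that is granted from the cited deterministic theory, the Dirac-measure collapse makes the remaining fixed-point and energy arguments a direct, simplified version of the proof of Theorem \ref{mvsde-wellposed}.
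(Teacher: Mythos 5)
Your proposal is correct and follows essentially the same route the paper intends: the paper gives no separate proof of this theorem, stating only that it is ``similar to Theorem \ref{mvsde-wellposed}'', and your argument is exactly the deterministic transcription of that proof (freeze the measure, invoke the deterministic well-posedness from the cited theory, run the contraction on $C([0,T],\mathcal{P}_2(H))$ using $\mathbb{W}_2(\delta_a,\delta_b)=\|a-b\|$, then close the energy estimate with Gronwall). No gaps.
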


We now
 consider the controlled equation on $\mathbb{R}^n$
 for   a control $v \in L^2(0,T; l^2)$:
\begin{align}\label{contequ}
      & \frac{\partial u_v (t)}{\partial t}
        + (-\triangle)^\alpha u_v (t)
        + f(t, \cdot,  u_v(t), \mathcal{L}_{u^0(t)} )
    =  g(t, \cdot, u_v(t), \mathcal{L}_{u^0(t)} )
        + \sigma(t, u_v (t), \mathcal{L}_{u^0(t)}) v(t),
\end{align}
 with initial data
 \begin{align}\label{contequ IV}
       u_v(0) = u_0,
 \end{align}
 where $u^0(t)$ is the solution to \eqref{dde}-\eqref{dde IV}.

\begin{remark}
  Note that the controlled equation
  \eqref{contequ}  associated with
  the  McKean-Vlasov stochastic
  equation \eqref{mvsde-2}
   depends on distributions of
   the solutions of the deterministic equation \eqref{dde}.
  This  is quite  different from the usual controlled equations associated with  the   stochastic equations
  which are independent of    distributions,
  see  \cite[Remark 2.2]{hll2021AMO}
  for more details in this regard.
\end{remark}

The next theorem is concerned
with the   well-posedness of
 \eqref{contequ}-\eqref{contequ IV}.

\begin{theorem}\label{cewellposed}
  Suppose that $({\bf \Sigma 1})$-$({\bf \Sigma 3})$ hold.
  Then for every $v \in L^2(0,T; l^2)$,
  problem \eqref{contequ}-\eqref{contequ IV} has a unique solution
    $ u_v \in C([0,T], H) \bigcap L^2(0,T; V) \bigcap L^p(0,T; L^p(\mathbb{R}^n)). $

  Moreover, for any
      $\| u_{0} \| \vee \| u_{0,i} \| \leq R_1$,
  and $\| v \|_{L^2(0, T; l^2)} \vee \| v_i \|_{L^2(0, T; l^2)} \leq R_2$,
      $i = 1,2$,
  the solutions $u_v(t)$ and $u_{v_i}(t)$ of \eqref{contequ}
  with initial data $u_0$ and $u_{0,i}$ respectively,
  satisfy for any $t \in [0,T]$,
  \begin{align}\label{dceue}
      \| u_{v}(t) \|^2
      + \int_0^T \| u_v(s) \|_V^2 ds
      + \int_0^T \| u_v(s) \|_{L^p(\mathbb{R}^n)}^p ds
      \leq M_2
  \end{align}
  and
  \begin{align} \label{dcelip}
         & \| u_{v_1}(t) - u_{v_2}(t)
         \|^2
           + \int_0^T \| u_{v_1}(s) - u_{v_2}(s) \|_V^2 ds
             \nonumber\\
    \leq & M_2
               \left( \| u_{0,1} - u_{0,2} \|^2
                      + \| v_1 - v_2 \|^2_{L^2(0, T; l^2)}
               \right),
  \end{align}
  where the constant $M_2 > 0$ depends on $R_1, R_2$ and $T$.
\end{theorem}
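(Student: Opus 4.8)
The plan is to establish Theorem \ref{cewellposed} in three stages: existence, the a priori bound \eqref{dceue}, and the Lipschitz estimate \eqref{dcelip}. Crucially, because the solution $u^0$ of \eqref{dde}-\eqref{dde IV} is already fixed by Theorem \ref{dde-wellposed}, the measure argument $\mathcal{L}_{u^0(t)}=\delta_{u^0(t)}$ in \eqref{contequ} is a \emph{prescribed} continuous curve in $\mathcal{P}_2(H)$ rather than an unknown. Thus \eqref{contequ} is genuinely decoupled: the nonlinearities $\tilde f(t,x,u):=f(t,x,u,\mathcal{L}_{u^0(t)})$, $\tilde g(t,x,u):=g(t,x,u,\mathcal{L}_{u^0(t)})$, and $\tilde\sigma(t,u):=\sigma(t,u,\mathcal{L}_{u^0(t)})$ inherit from $({\bf \Sigma 1})$-$({\bf \Sigma 3})$ exactly the dissipativity, local Lipschitz, growth, and monotonicity properties of a standard (measure-free) fractional reaction-diffusion equation, with the $\mu$-dependent terms now absorbed into time-dependent coefficients. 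For existence and uniqueness I would therefore invoke the monotone-operator / Galerkin machinery for deterministic equations in the Gelfand triple $V\cap L^p(\mathbb{R}^n)\hookrightarrow H\hookrightarrow (V\cap L^p(\mathbb{R}^n))^\ast$, treating the control term $\tilde\sigma(t,u_v(t))v(t)$ as a forcing in $L^2(0,T;H)$ (it lies there since $\|\tilde\sigma(t,u_v)\|_{L_2(l^2,H)}$ is controlled by \eqref{sigma3} and $v\in L^2(0,T;l^2)$), exactly as in the approximation scheme of \cite{hll2021AMO, hhl2022arXiv}.

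For the a priori estimate \eqref{dceue} I would test \eqref{contequ} against $u_v(t)$ and integrate. The diffusion and reaction structure produces
\begin{align*}
    \frac{d}{dt}\|u_v(t)\|^2 + 2\|(-\triangle)^{\frac{\alpha}{2}}u_v(t)\|^2 + 2\lambda_1\|u_v(t)\|_{L^p}^p
    \leq C\bigl(1+\|u_v(t)\|^2 + \|u^0(t)\|^2\bigr) + 2\bigl(\sigma(t,u_v,\mathcal{L}_{u^0})v,\,u_v\bigr),
\end{align*}
where the coercivity term $2\lambda_1\|u_v\|_{L^p}^p$ comes from \eqref{f-1} and the measure term $\mathcal{L}_{u^0(t)}(\|\cdot\|^2)=\|u^0(t)\|^2$ is finite and bounded uniformly in $t$ by Theorem \ref{dde-wellposed}. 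The only new feature is the control term, which I would handle by Cauchy-Schwarz and \eqref{sigma3}:
$$
    2\bigl|(\sigma(t,u_v,\mathcal{L}_{u^0})v,\,u_v)\bigr|
    \leq 2\|\sigma(t,u_v,\mathcal{L}_{u^0})\|_{L_2(l^2,H)}\,\|v(t)\|_{l^2}\,\|u_v\|
    \leq M_{\sigma,T}\bigl(1+\|u_v\|^2+\|u^0\|^2\bigr)\|v(t)\|_{l^2} + \cdots,
$$
so that $\|v(t)\|_{l^2}$ appears as a time-dependent but $L^1(0,T)$ coefficient in front of $\|u_v(t)\|^2$. Applying Gronwall's inequality with this integrable coefficient (here $\|v\|_{L^2(0,T;l^2)}\leq R_2$ enters via $\int_0^T\|v(s)\|_{l^2}\,ds\leq \sqrt{T}\,R_2$) yields the uniform bound on $\|u_v\|_{C([0,T],H)}$, and the spatial-derivative and $L^p$ norms follow by integrating the coercive terms back in, producing a constant $M_2$ depending only on $R_1,R_2,T$.

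For the Lipschitz estimate \eqref{dcelip} I would subtract the equations for $u_{v_1}$ and $u_{v_2}$, set $w:=u_{v_1}-u_{v_2}$, and test against $w$. Note that \emph{both} solutions use the same measure curve $\mathcal{L}_{u^0(t)}$, so the reaction difference reduces to $f(s,x,u_{v_1},\mathcal{L}_{u^0})-f(s,x,u_{v_2},\mathcal{L}_{u^0})$ with no Wasserstein remainder; pairing this with $w$ and using the one-sided bound \eqref{f-4}, namely $\partial_u f\geq -\psi_4$, gives a lower bound $-\|\psi_4\|_\infty\|w\|^2$ exactly as in \eqref{sol-3} but without the $\mathbb{W}_2(\mu,\nu)$ term. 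The $g$-difference is controlled by \eqref{g-2}, again with vanishing measure part. The forcing difference splits as
$$
    \sigma(t,u_{v_1},\mathcal{L}_{u^0})v_1 - \sigma(t,u_{v_2},\mathcal{L}_{u^0})v_2
    = \bigl(\sigma(t,u_{v_1},\mathcal{L}_{u^0})-\sigma(t,u_{v_2},\mathcal{L}_{u^0})\bigr)v_1
      + \sigma(t,u_{v_2},\mathcal{L}_{u^0})(v_1-v_2),
$$
where the first piece is bounded via the Lipschitz property \eqref{sigmalip} by $\sqrt{L_\sigma}\,\|w\|\,\|v_1(t)\|_{l^2}$ and the second by the growth bound \eqref{sigma3} together with the already-established uniform bound on $u_{v_2}$, yielding a term of order $\|v_1-v_2\|_{l^2}$. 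Pairing with $w$ and using Young's inequality produces, after absorbing the $\|(-\triangle)^{\alpha/2}w\|^2$-coercivity, a differential inequality of the form $\frac{d}{dt}\|w\|^2 \leq a(t)\|w\|^2 + M_2\|v_1(t)-v_2(t)\|_{l^2}^2$ with $a\in L^1(0,T)$; Gronwall then gives \eqref{dcelip}.

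The main obstacle I anticipate is not any single estimate but the bookkeeping required to keep every constant uniform in $R_1,R_2,T$ while simultaneously treating the arbitrary polynomial order $p$ correctly: the coercive term $2\lambda_1\|u_v\|_{L^p}^p$ must be retained (not discarded) so that \eqref{dceue} bounds the $L^p(0,T;L^p)$ norm, yet in the Lipschitz estimate the superlinear part of $f$ is handled only through the monotonicity condition \eqref{f-4} and \emph{cannot} be estimated by brute force in $L^p$, which is precisely why the one-sided bound on $\partial_u f$ is essential. Making the Galerkin/approximation limit rigorous for the $L^p$-superlinear term on the unbounded domain $\mathbb{R}^n$ — identifying the weak limit of $f(\cdot,\cdot,u_v^{(n)},\mathcal{L}_{u^0})$ via monotonicity and a Minty-type argument — is the delicate step, and I would follow the approximation scheme of \cite{hll2021AMO, hhl2022arXiv} adapted to the fractional operator, as the authors indicate.
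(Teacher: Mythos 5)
Your proposal follows essentially the same route as the paper: existence and uniqueness are obtained by freezing the prescribed measure curve $\mathcal{L}_{u^0(t)}$ and invoking the standard (measure-free) theory for the resulting deterministic fractional reaction-diffusion equation; the bound \eqref{dceue} comes from testing with $u_v$, using \eqref{f-1}, \eqref{g-3}, \eqref{sigma3} and Gronwall with the integrable coefficient involving $\|v(t)\|_{l^2}$; and \eqref{dcelip} comes from the same splitting of the control term, the one-sided bound \eqref{f-4}, \eqref{g-2}, \eqref{sigmalip}, and Gronwall. The only (immaterial) deviation is that you keep $\|v(t)\|_{l^2}$ to the first power in the Gronwall coefficient where the paper uses $\|v(t)\|_{l^2}^2$; both are in $L^1(0,T)$, so the argument is sound.
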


\begin{proof}
   Given $v\in L^2(0,T; l^2)$,
   by
      $({\bf \Sigma 1})$-$({\bf \Sigma 3})$
      and the argument of   \cite{wangb2019JDE},
      one can verify  that
      for every
      $u_0\in H$,
       problem \eqref{contequ}-\eqref{contequ IV} has a unique solution
 $u_v$ in the space
    $$C([0,T], H) \bigcap L^2(0,T; V) \bigcap L^p(0,T; L^p(\mathbb{R}^n)).$$

    Next, we derive
     the uniform  estimates on
     the
     solutions
      of \eqref{contequ}-\eqref{contequ IV}. Note that
   \begin{align}\label{ceue-1}
     & \frac{d}{dt} \| u_v(t) \|^2
        + 2 \| (-\triangle)^{\frac{\alpha}{2}} u_v(t) \|^2
        + 2 \int_{\mathbb{R}^n} f(t, x, u_v(t), \mathcal{L}_{u^0(t)} ) u_v(t) dx
          \nonumber \\
   = & 2
         \left( g(t, \cdot, u_v(t), \mathcal{L}_{u^0(t)} ),
                u_v(t)
         \right)
       + 2 \left( \sigma(t, u_v(t), \mathcal{L}_{u^0(t)} ) v(t),
                  u_v(t)
           \right).
\end{align}

From \eqref{f-1} and Theorem \ref{dde-wellposed}, it follows that
\begin{align}\label{ceue-2}
        & 2 \int_{\mathbb{R}^n} f(t, x, u_v(t), \mathcal{L}_{u^0(t)} ) u_v(t) dx
          \nonumber \\
   \geq &  2 \lambda_1 \| u_v(t) \|_{L^p(\mathbb{R}^n)}^{p}
           - 2 \| \psi_{1}(t) \|_{L^1(\mathbb{R}^n)}
               \left( 1 + \mathcal{L}_{u^0(t)} (\|\cdot\|^2) \right)
               -2 \| \psi_1 (t)\|
               _{L^\infty (\mathbb{R}^n)}
               \| u_v(t) \|^2
                 \nonumber \\
   \geq &  2 \lambda_1 \| u_v(t) \|_{L^p(\mathbb{R}^n)}^{p}
           - 2 \| \psi_{1}(t) \|_{L^1(\mathbb{R}^n)}
               \left( 1 + \| u^0(t) \|^2) \right)
                -2 \| \psi_1 (t)\|
               _{L^\infty (\mathbb{R}^n)}
               \| u_v(t) \|^2
                 \nonumber \\
   \geq &  2 \lambda_1 \| u_v(t) \|_{L^p(\mathbb{R}^n)}^{p}
           - 2 \| \psi_{1}(t) \|_{L^1(\mathbb{R}^n)}
               \left[ 1 + M_0( 1 + R_1^2 ) \right]
                -2 \| \psi_1 (t)\|
               _{L^\infty (\mathbb{R}^n)}
               \| u_v(t) \|^2.
\end{align}

 By \eqref{g-3} and Theorem \ref{dde-wellposed},
 we have
\begin{align}\label{ceue-3}
        & 2 \left( g(t, \cdot, u_v(t), \mathcal{L}_{u^0(t)} ), u_v(t) \right)
            \nonumber \\
   \leq & \| u_v(t) \|^2
           + \| g(t, \cdot, u_v(t), \mathcal{L}_{u^0(t)} ) \|^2
             \nonumber \\
   \leq & \| u_v(t) \|^2
           + 4 \| \psi_g(t) \|_{L^\infty(\mathbb{R}^n) }^2
                 \|  u_v(t) \|^2
                 + 2 \| \psi_g(t) \|^2
                 \left (
                 1+ 2M_0( 1 + R_1^2 )
                 \right )  .
\end{align}
By \eqref{sigma3} and Theorem \ref{dde-wellposed},  we have
\begin{align}\label{ceue-4}
        & 2
            \left( \sigma(t, u_v(t), \mathcal{L}_{u^0(t)} ) v(t),
                   u_v(t)
            \right)  \nonumber\\
   \leq & \| \sigma(t, u_v(t), \mathcal{L}_{u^0(t)} ) \|^2_{L_2(l^2, H)} + \| v(t) \|_{l^2}^2 \| u_v(t) \|^2 \nonumber\\
   \leq & M_{\sigma,T} \left( 1 + \|u^0(t)\|^2 \right)
          + \left( M_{\sigma,T} + \| v(t) \|_{l^2}^2 \right) \| u_v (t) \|^2
            \nonumber \\
   \leq & M_{\sigma,T} \left[ 1 + M_0\left( 1 + R_1^2 \right) \right]
          + \left( M_{\sigma,T} + \| v(t) \|_{l^2}^2 \right) \| u_v (t) \|^2.
\end{align}
Then by \eqref{ceue-1}-\eqref{ceue-4}, we obtain
\begin{align}\label{ceue-5}
      & \frac{d}{dt} \| u_v(t) \|^2
        + 2 \| (-\triangle)^{\frac{\alpha}{2}} u_v(t) \|^2
        + 2 \lambda_1 \| u_v(t) \|_{L^p(\mathbb{R}^n)}^{p}
             \nonumber \\
 \leq &
          \left( 1 +
          2 \| \psi_1 (t) \|
          _{L^\infty(\mathbb{R}^n) }
          + 4 \| \psi_g(t) \|_{L^\infty(\mathbb{R}^n) }^2
                   + M_{\sigma,T}
                   + \| v(t) \|_{l^2}^2
          \right) \| u_v(t) \|^2  \nonumber\\
      & + 2 \| \psi_{1}(t) \|_{L^1(\mathbb{R}^n)}
               \left[ 1 + M_0( 1 + R_1^2 ) \right]
        + 2 \| \psi_g(t) \| ^2
        (1+ 2 M_0( 1 + R_1^2 ) )
          \nonumber \\
      &
        + M_{\sigma,T} \left[ 1 + M_0(1 + R_1^2) \right]
        \nonumber \\
 \leq &
         \left( c + \| v(t) \|_{l^2}^2 \right) \| u_v(t) \|^2
         + c
             \left( 1 + \| \psi_{1}(t) \|_{L^1(\mathbb{R}^n)}
                      + \| \psi_g(t) \|^2
             \right),
\end{align}
 where $c=c(T, R_1)>0$

 By \eqref{ceue-5} and Gronwall's inequality,
 we obtain  for all  $t \in [0,T]$,
\begin{align}\label{ceue-6}
        \| u_v(t) \|^2
 \leq & \| u_0 \|^2 e^{\int^{t}_{0} \left( c + \|v(s)\|_{l^2}^2 \right) ds}
       + c \int_0^t e^{\int_s^t \left( c + \|v(r)\|_{l^2}^2 \right) dr}
                   \left( 1 + \| \psi_{1}(s) \|_{L^1(\mathbb{R}^n)}
                            + \| \psi_g(s) \|^2
                   \right)
            ds  \nonumber \\
 \leq & R_1^2 e^{cT + R_2^2}
       + c e^{ cT + R_2^2 }
                   \left( T + \| \psi_{1} \|_{L^1(0,T; L^1(\mathbb{R}^n) ) }
                            + \| \psi_g \|_{L^2(0,T; H) }^2
                   \right).
\end{align}
By \eqref{ceue-5}-\eqref{ceue-6} we
see that   there exists   $c_1 = c_1(T, R_1, R_2)>0$ such that
\begin{align}\label{ceue-7}
      & \| u_v(t) \|^2
        + \int_0^t 2 \| (-\triangle)^{\frac{\alpha}{2}} u_v(s) \|^2 ds
        + 2 \lambda_1 \int_0^t \| u_v(s) \|_{L^p(\mathbb{R}^n)}^{p} ds
      \leq c_1 , \ \ \  \forall \  t \in [0,T].
\end{align}
Then \eqref{dceue} follows from
\eqref{ceue-7}.

We now  prove \eqref{dcelip},
 which implies   the uniqueness of solutions to \eqref{contequ}-\eqref{contequ IV}.
  Let $u_1(t) = u_{v_1}(t, u_{0,1})$
 and $u_2(t) = u_{v_2}(t, u_{0,2})$
  be the solutions of \eqref{contequ}-\eqref{contequ IV} with
  initial data
  $u_{0,1}$ and
  $u_{0,2}$ and
    controls $v_1$
  and $v_2$,
  respectively.

 By \eqref{dceue}, we find that
 if  $\| u_{0,1} \|
 \vee \| u_{0,2} \|
  \leq R_1$
 and $\|v_1\|_{L^2(0,T; l^2)}
 \vee \|v_2\|_{L^2(0,T; l^2)} \leq R_2$,
 then   for all  $ t \in [0,T]$,
 \begin{align}\label{dcel-0}
    \| u_1(t) \|^2 \leq  M_2  \
    \text{ and } \
    \| u_2(t) \|^2 \leq  M_2.
 \end{align}
 By \eqref{contequ}, we obtain
 \begin{align}\label{dcel-1}
     & \frac{d}{dt} \| u_1(t) - u_2(t) \|^2
       + 2 \| (-\triangle)^{\frac{\alpha}{2}} \left( u_1(t) - u_2(t) \right) \|^2
             \nonumber \\
     & + 2 \int_{\mathbb{R}^n}
                   \left( f(t, x, u_1(t), \mathcal{L}_{u_0(t)} )
                           - f(t, x, u_2(t), \mathcal{L}_{u_0(t)})
                   \right)
                   \left( u_1(t) - u_2(t) \right) dx
          \nonumber \\
   = & 2
         \left(
                g(t, \cdot, u_1(t), \mathcal{L}_{u_0(t)} )
                 - g(t, \cdot, u_2(t), \mathcal{L}_{u_0(t)} ), \
                u_1(t) - u_2(t)
         \right)  \nonumber \\
     & + 2
          \left(
                 \sigma(t, u_1(t), \mathcal{L}_{u_0(t)} ) v_1(t)
                  - \sigma(t, u_2(t), \mathcal{L}_{u_0(t)} ) v_2(t), \
                 u_1(t) - u_2(t)
          \right).
 \end{align}
 For the third term on the left-and side of \eqref{dcel-1},
 by \eqref{f-4}, we have
\begin{align}\label{dce1-2}
     &  - 2  \int_{\mathbb{R}^n}
                    \left( f(t, x, u_1(t), \mathcal{L}_{u_0(t)} )
                            - f(t, x, u_2(t), \mathcal{L}_{u_0(t)})
                    \right)
                    \left( u_1(t) - u_2(t)
                    \right) dx
                   \nonumber \\
     \leq & 2 \| \psi_4(t) \|_{L^\infty(\mathbb{R}^n)} \| u_1(t) - u_2(t) \|^2.
\end{align}

 For the first term on the right-hand side of \eqref{dcel-1},
 by \eqref{g-2}, we obtain
\begin{align}\label{dce1-3}
       & 2
          \left(
                 g(t, \cdot, u_1(t), \mathcal{L}_{u_0(t)} )
                  - g(t, \cdot, u_2(t), \mathcal{L}_{u_0(t)} ), \
                 u_1(t) - u_2(t)
          \right)  \nonumber \\
  \leq & 2 \| g(t, \cdot, u_1(t), \mathcal{L}_{u_0(t)} ) - g(t, \cdot, u_2(t), \mathcal{L}_{u_0(t)} ) \|
           \| u_1(t) - u_2(t) \|
             \nonumber \\
  \leq & 2 \|\psi_g\|_{L^\infty(0,T;L^\infty(\mathbb{R}^n) ) }  \| u_1(t) - u_2(t) \|^2.
\end{align}

 For the second term on the right-hand side of \eqref{dcel-1},
 by \eqref{sigma3}, \eqref{dcel-0}, and Theorem \ref{dde-wellposed}, we get
\begin{align}\label{dcel-4}
       & 2
           \left(
                  \sigma(t, u_1(t), \mathcal{L}_{u_0(t)} ) v_1(t)
                   - \sigma(t, u_2(t), \mathcal{L}_{u_0(t)} ) v_2(t), \
                  u_1(t) - u_2(t)
           \right)  \nonumber \\
  \leq & 2 \| \left( \sigma(t, u_1(t), \mathcal{L}_{u_0(t)} ) - \sigma(t, u_2(t), \mathcal{L}_{u_0(t)} ) \right) v_1(t) \|
               \| u_1(t) - u_2(t) \|  \nonumber \\
       & + 2 \| \sigma(t, u_2(t), \mathcal{L}_{u_0(t)} ) \left( v_1(t) - v_2(t) \right) \|
                 \| u_1(t) - u_2(t) \|  \nonumber \\
  \leq & 2 \| \sigma(t, u_1(t), \mathcal{L}_{u_0(t)} ) - \sigma(t, u_2(t), \mathcal{L}_{u_0(t)} ) \|_{L_2(l^2, H)}
               \| v_1(t) \|_{l^2}
               \| u_1(t) - u_2(t) \|  \nonumber \\
       & + \| \sigma(t, u_2(t), \mathcal{L}_{u_0(t)} ) \|_{L_2(l^2, H)}^2
           \| u_1(t) - u_2(t) \|^2
         + \| v_1(t) - v_2(t) \|_{l^2}^2
                  \nonumber \\
  \leq & 2 L_{\sigma}^{\frac{1}{2}}
               \| v_1(t) \|_{l^2}
               \| u_1(t) - u_2(t) \|^2  \nonumber \\
       & + M_{\sigma,T}
                       \left( 1 + \| u_2(t) \|^2 + \|u^0(t)\|^2
                       \right)  \| u_1(t) - u_2(t) \|^2
         + \| v_1(t) - v_2(t) \|_{l^2}^2
              \nonumber \\
  \leq & 2 L_{\sigma}^{\frac{1}{2}}
               \| v_1(t) \|_{l^2}
               \| u_1(t) - u_2(t) \|^2  \nonumber \\
       & + M_{\sigma,T}
                       \left[ 1 + M_2 + M_0 \left( 1 + R_1^2 \right)
                       \right]  \| u_1(t) - u_2(t) \|^2
         + \| v_1(t) - v_2(t) \|_{l^2}^2
              \nonumber \\
  \leq & c_1 \left( 1 + \| v_1(t) \|_{l^2} \right) \| u_1(t) - u_2(t) \|^2
         + \| v_1(t) - v_2(t) \|_{l^2}^2,
\end{align}
where
     $ c_1 = 2 L_{\sigma}^{\frac{1}{2}}
             + M_{\sigma,T}
                       \left[ 1 + M_2 + M_0 \left( 1 + R_1^2 \right)
                       \right] $.

 It  follows from \eqref{dcel-1}-\eqref{dcel-4} that for $t \in [0,T]$,
\begin{align} \label{dcel-5}
        & \frac{d}{dt} \| u_1(t) - u_2(t) \|^2
          + 2 \| (-\triangle)^{\frac{\alpha}{2}} \left( u_1(t) - u_2(t) \right) \|^2
             \nonumber \\
   \leq & \left( 2 \| \psi_4(t) \|_{L^\infty(\mathbb{R}^n)}
                    + 2 \|\psi_g\|_{L^\infty(0,T;L^\infty(\mathbb{R}^n) ) }
                    + c_1
                    + c_1 \| v_1(t) \|_{l^2}
            \right)
            \| u_1(t) - u_2(t) \|^2
                \nonumber \\
        & + \| v_1(t) - v_2(t) \|_{l^2}^2
                \nonumber \\
   \leq & c_2 \left( 1 + \| v_1(t) \|_{l^2}
              \right)  \| u_1(t) - u_2(t) \|^2
          + \| v_1(t) - v_2(t) \|_{l^2}^2,
\end{align}
 where
      $ c_2 = 2 \| \psi_4 \|_{L^\infty(0,T; L^\infty(\mathbb{R}^n) ) }
              + 2 \|\psi_g\|_{L^\infty(0,T;L^\infty(\mathbb{R}^n) ) }
              + c_1.
      $

 By  applying Gronwall's inequality to \eqref{dcel-5}, we
 get for all  $t \in [0,T]$,
\begin{align*}
       & \| u_1(t) - u_2(t) \|^2
             \nonumber \\
  \leq & \| u_{0,1} - u_{0,2} \|^2
         e^{ c_2 \int_{0}^{t}
                          \left( 1 + \| v_1(r) \|_{l^2}
                           \right) dr }
        + \int_{0}^{t}
                       e^{ c_2 \int_{s}^{t}
                                  \left( 1 + \| v_1(r) \|_{l^2}
                                 \right) dr }
                       \| v_1(s) - v_2(s) \|_{l^2}^2 ds,
\end{align*}
 from which we obtain for any $t \in [0,T]$,
\begin{align} \label{dcel-6}
           \| u_1(t) - u_2(t) \|^2
    \leq & e^{ c_2 T + c_2 T^{\frac{1}{2}} R_2 }
           \left( \| u_{0,1} - u_{0,2} \|^2
                   + \| v_1 - v_2 \|_{L^2(0,T; l^2)}^2
           \right).
\end{align}
 Integrating \eqref{dcel-5}, we obtain by \eqref{dcel-6}  that
\begin{align} \label{dcel-7}
        &  2\int_0^T
                   \| (-\triangle)^{\frac{\alpha}{2}} \left( u_1(s) - u_2(s) \right)
                   \|^2 ds  \nonumber \\
   \leq &
    \| u_{0,1} - u_{0,2} \|^2
    +
   c_2
             \left( T + T^{\frac{1}{2}} R_2
             \right)
          \sup_{t\in [0,T]} \| u_1(t) - u_2(t) \|^2
          + \| v_1 - v_2 \|_{L^2(0,T; l^2)}^2.
\end{align}
Then, \eqref{dcelip} follows from \eqref{dcel-6} and \eqref{dcel-7}.
\end{proof}

\subsection{Uniform tail-estimates of solutions}

 In this subsection,
 we   derive the uniform estimates
 on  the tails of solutions to \eqref{contequ}-\eqref{contequ IV} outside a bounded domain,
 which are crucial for establishing the precompactness of solutions.

\begin{lemma} \label{tailestim}
  Suppose   $({\bf \Sigma 1})$-$({\bf \Sigma 3})$ hold,
  $T>0$  and $ u_0 \in H$.
  Then for any $R > 0$ and $\delta > 0$, there exists $m_0 = m_0(u_0, T, R, \delta) > 0$ such that
  for all $v \in L^2(0, T; l^2)$ with $\| v \|_{L^2(0, T; l^2)} \leq R$,
  the solution $u_v$ to problem \eqref{contequ}-\eqref{contequ IV}
  satisfies
  \begin{align*}
      \int_{|x| \geq m} |u_v(t,x)|^2 dx < \delta,
        \ \ \forall\  m \geq m_0, \   t \in [0,T].
  \end{align*}
\end{lemma}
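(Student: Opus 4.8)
The plan is to adapt the nonlocal cut-off technique of \cite{wangb2023JDE} to the present measure-dependent setting. Fix a smooth function $\rho:[0,\infty)\to[0,1]$ with $\rho(s)=0$ for $s\le\frac12$, $\rho(s)=1$ for $s\ge1$ and $|\rho'|\le c_0$, and set $\rho_m(x)=\rho(|x|^2/m^2)$. Then $\rho_m\equiv1$ on $\{|x|\ge m\}$, $\mathrm{supp}\,\rho_m\subseteq\{|x|\ge m/\sqrt2\}$ and $|\nabla\rho_m|\le c_0'/m$. Multiplying \eqref{contequ} by $\rho_m u_v$ and integrating over $\mathbb{R}^n$ yields the tail energy identity
\begin{align}\label{tail-id}
      & \frac12\frac{d}{dt}\int_{\mathbb{R}^n}\rho_m|u_v(t)|^2\,dx
        + \left((-\triangle)^\alpha u_v(t),\rho_m u_v(t)\right)
        + \int_{\mathbb{R}^n} f(t,x,u_v(t),\mathcal{L}_{u^0(t)})\,\rho_m u_v(t)\,dx \nonumber\\
    = & \left(g(t,\cdot,u_v(t),\mathcal{L}_{u^0(t)}),\rho_m u_v(t)\right)
        + \left(\sigma(t,u_v(t),\mathcal{L}_{u^0(t)})v(t),\rho_m u_v(t)\right).
\end{align}
Since $\int_{|x|\ge m}|u_v(t)|^2\,dx\le\int_{\mathbb{R}^n}\rho_m|u_v(t)|^2\,dx$, it suffices to control the right-hand quantity and show it is small for large $m$, uniformly in $t\in[0,T]$ and in $v$ with $\|v\|_{L^2(0,T;l^2)}\le R$.

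For the reaction, drift and control terms I would argue as in Theorems \ref{mvsde-wellposed} and \ref{cewellposed}, retaining the cut-off weight and recalling $\mathcal{L}_{u^0(t)}(\|\cdot\|^2)=\|u^0(t)\|^2\le M_0(1+\|u_0\|^2)$ by Theorem \ref{dde-wellposed}. The dissipativity \eqref{f-1} gives $\int_{\mathbb{R}^n} f\,\rho_m u_v\,dx\ge\lambda_1\int_{\mathbb{R}^n}\rho_m|u_v|^p\,dx-\int_{|x|\ge m/\sqrt2}\psi_1(t,x)\big(1+\|u^0(t)\|^2\big)\,dx-\|\psi_1(t)\|_{L^\infty(\mathbb{R}^n)}\int_{\mathbb{R}^n}\rho_m|u_v|^2\,dx$, where the first term is discarded, the middle is a tail integral of $\psi_1\in L^1$, and the last feeds a Gronwall coefficient. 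The $g$-term is treated by \eqref{g-3} and Young's inequality, producing a tail integral of $\psi_g$ plus a multiple of $\int\rho_m|u_v|^2$. For the control term I would expand $\sigma$ by \eqref{sigmadef} and split it into the $\sigma_{1,k}$-part, bounded through $\|v(t)\|_{l^2}$ by $\frac12\int_{\mathbb{R}^n}\rho_m|u_v|^2+\frac12\|v(t)\|_{l^2}^2\int_{|x|\ge m/\sqrt2}\sum_k|\sigma_{1,k}(t,x)|^2\,dx$, and the $\kappa\sigma_{2,k}$-part, controlled through \eqref{sigma1} and $\kappa\in L^2\cap L^\infty$ by a tail integral of $|\kappa|^2$ together with $\|\kappa\|_{L^\infty}\int\rho_m|u_v|^2$. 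All tail integrals vanish as $m\to\infty$ uniformly in $t$ (for $\sigma_1$ this uses continuity on the compact $[0,T]$), while $\int_0^T\|v\|_{l^2}^2\,dt\le R^2$.

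The crux is the nonlocal term $((-\triangle)^\alpha u_v,\rho_m u_v)$, since $(-\triangle)^\alpha$ does not commute with multiplication by $\rho_m$. Writing it through the Gagliardo bilinear form and using $\rho_m(x)u_v(x)-\rho_m(y)u_v(y)=\frac12(\rho_m(x)+\rho_m(y))(u_v(x)-u_v(y))+\frac12(u_v(x)+u_v(y))(\rho_m(x)-\rho_m(y))$, I obtain
\begin{align}\label{tail-frac}
      \left((-\triangle)^\alpha u_v,\rho_m u_v\right)
    = & \frac{C(n,\alpha)}{4}\int_{\mathbb{R}^n}\int_{\mathbb{R}^n}
          \frac{(\rho_m(x)+\rho_m(y))\,|u_v(x)-u_v(y)|^2}{|x-y|^{n+2\alpha}}\,dx\,dy \nonumber\\
      & + \frac{C(n,\alpha)}{4}\int_{\mathbb{R}^n}\int_{\mathbb{R}^n}
          \frac{(u_v(x)^2-u_v(y)^2)(\rho_m(x)-\rho_m(y))}{|x-y|^{n+2\alpha}}\,dx\,dy.
\end{align}
The first integral is nonnegative and discarded. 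For the commutator integral I would use $|u_v(x)^2-u_v(y)^2|\le|u_v(x)-u_v(y)|(|u_v(x)|+|u_v(y)|)$, the Cauchy-Schwarz inequality, and the uniform bound $\int_{\mathbb{R}^n}|\rho_m(x)-\rho_m(y)|^2|x-y|^{-n-2\alpha}\,dy\le C m^{-2\alpha}$ for all $x$, which follows by splitting at $|x-y|=m$ and using $|\rho_m(x)-\rho_m(y)|\le\min\{c_0'|x-y|/m,\,1\}$ together with $\alpha\in(0,1)$. This yields $|\text{commutator}|\le Cm^{-\alpha}\|u_v(t)\|_V^2$, whence, after integration in time, the bound $Cm^{-\alpha}\int_0^T\|u_v\|_V^2\,dt\le Cm^{-\alpha}M_2$ by \eqref{dceue}, tending to $0$ as $m\to\infty$. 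This fractional commutator estimate, by which the nonlocality is tamed on the unbounded domain, is the step I expect to be the main obstacle.

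Collecting the estimates and discarding the nonnegative terms on the left of \eqref{tail-id}, I arrive at a differential inequality $\frac{d}{dt}\int_{\mathbb{R}^n}\rho_m|u_v(t)|^2\,dx\le a(t)\int_{\mathbb{R}^n}\rho_m|u_v(t)|^2\,dx+h_m(t)$, where $a\in L^1(0,T)$ with $\int_0^T a\,dt$ bounded uniformly for $\|v\|_{L^2(0,T;l^2)}\le R$ (it collects $\|\psi_1(t)\|_{L^\infty}$, $\|\psi_g(t)\|_{L^\infty}$, $\|\kappa\|_{L^\infty}$, $\|v(t)\|_{l^2}$ and $\|v(t)\|_{l^2}^2$), and $\int_0^T h_m(t)\,dt\to0$ as $m\to\infty$ uniformly in such $v$. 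Applying Gronwall's inequality gives
$$\int_{\mathbb{R}^n}\rho_m|u_v(t)|^2\,dx\le\left(\int_{\mathbb{R}^n}\rho_m|u_0|^2\,dx+\int_0^T h_m(s)\,ds\right)e^{\int_0^T a(s)\,ds}.$$
Since $u_0\in H$ forces $\int_{\mathbb{R}^n}\rho_m|u_0|^2\,dx\le\int_{|x|\ge m/\sqrt2}|u_0|^2\,dx\to0$, the right-hand side can be made smaller than $\delta$ for all $m\ge m_0(u_0,T,R,\delta)$, $t\in[0,T]$ and admissible $v$, which together with $\int_{|x|\ge m}|u_v(t)|^2\,dx\le\int_{\mathbb{R}^n}\rho_m|u_v(t)|^2\,dx$ proves the claim.
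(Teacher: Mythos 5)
Your proof is correct and follows essentially the same route as the paper: a smooth cut-off energy estimate, an $O(m^{-\alpha})$ control of the nonlocal term, Gronwall's inequality, and uniform smallness of the tail integrals of $u_0$, $\psi_1$, $\psi_g$, $\kappa$ and $\sigma_1$ (the last via compactness of $\{\sigma_1(s):s\in[0,T]\}$ in $L^2(\mathbb{R}^n,l^2)$). The only real difference is that you derive the fractional commutator bound directly from the Gagliardo bilinear form, whereas the paper simply cites estimate (8.18) of \cite{wangb2019JDE}; your self-contained derivation (splitting the kernel integral at $|x-y|=m$ and using $|\nabla\rho_m|\lesssim m^{-1}$ with $\alpha\in(0,1)$) is correct.
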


\begin{proof}
   Let $\zeta: \mathbb{R}^n \rightarrow [0,1]$ be a smooth function
   such that
   $\zeta(x)=0$ for $|x| \leq \frac{1}{2}$
   and $\zeta(x)=1$ for $|x| \geq 1$.

   Given $m \in \mathbb{N}$,
   let $\zeta_m(x) = \zeta( \frac{x}{m} )$,
   then by \eqref{contequ}, we have
\begin{align*}
        & \frac{\partial}{\partial t} \zeta_m u_v (t)
          + \zeta_m (-\triangle)^\alpha u_v (t)
          + \zeta_m f(t, \cdot, u_v(t), \mathcal{L}_{u^0(t)} )
            \nonumber \\
      = & \zeta_m g(t, \cdot, u_v(t), \mathcal{L}_{u^0(t)} )
          + \zeta_m \sigma(t, u_v (t), \mathcal{L}_{u^0(t)}) v(t).
\end{align*}
 Then  we get
\begin{align} \label{te1}
     & \frac{d}{dt} \| \zeta_m u_v(t) \|^2
        + 2
            \left( (-\triangle)^{\frac{\alpha}{2}} u_v(t),
                      (-\triangle)^{\frac{\alpha}{2}}
                                \left( \zeta_m^2 u_v(t) \right)
            \right)
             \nonumber \\
     & + 2 \int_{\mathbb{R}^n} f(t, x, u_v(t), \mathcal{L}_{u^0(t)} ) \zeta_m^2(x) u_v(t) dx
           \nonumber \\
   = & 2 \left( \zeta_m g(t, \cdot, u_v(t), \mathcal{L}_{u^0(t)} ), \zeta_m u_v(t) \right)
       + 2 \left( \zeta_m \sigma(t, u_v (t), \mathcal{L}_{u^0(t)}) v(t), \zeta_m u_v(t) \right).
\end{align}
 For the second term on the left-hand side of \eqref{te1},
 by (8.18) in \cite{wangbJDE2019},
 we find that there exists a positive constant $c_1$ independent of $m$ such that for all $t \geq 0$,
\begin{align} \label{te2}
       & - 2
            \left( (-\triangle)^{\frac{\alpha}{2}} u_v(t),
                      (-\triangle)^{\frac{\alpha}{2}}
                                \left( \zeta_m^2 u_v(t) \right)
            \right)
             \nonumber \\
  \leq & c_1 C(n,\alpha) m^{-\alpha} \| u_v(t) \|^2
         + 2 c_1 m^{-\alpha} \| (-\triangle)^{\frac{\alpha}{2}} u_v(t) \|^2.
\end{align}
 For the third term on the left-hand side of \eqref{te1},
 by \eqref{f-1} and Theorem \ref{dde-wellposed}, we have
\begin{align} \label{te3}
       & 2 \int_{\mathbb{R}^n} f(t, x, u_v(t), \mathcal{L}_{u^0(t)} ) \zeta_m^2(x) u_v(t) dx
           \nonumber \\
  \geq & - 2 \int_{\mathbb{R}^n} \zeta_m^2(x) \psi_1(t,x) (1 +
  |u_v(t)|^2
  +  \mathcal{L}_{u^0(t)}( \|\cdot\|^2) ) dx
           \nonumber \\
  \geq & - 2 (1 + \| u^0(t) \|^2) ) \int_{|x| \geq \frac{m}{2}} |\psi_1(t,x)| dx
   - 2 \int_{\mathbb{R}^n} \zeta_m^2(x) \psi_1(t,x)
  |u_v(t)|^2  dx
           \nonumber \\
  \geq & - 2 \left[ 1 + M_0( 1 + \| u_0 \|^2) \right] \int_{|x| \geq \frac{m}{2}} |\psi_1(t,x)| dx
  - 2 \|
  \psi_1(t)
  \|_{L^\infty(\mathbb{R}^n)}
    \| \zeta_m
   u_v(t)
   \|^.
\end{align}
  For the first term on the right-hand side of \eqref{te1},
 by \eqref{g-3},
 Young's inequality and Theorem \ref{dde-wellposed}, we obtain
\begin{align} \label{te4}
       & 2 \left( \zeta_m g(t, \cdot, u_v(t), \mathcal{L}_{u^0(t)} ), \zeta_m u_v(t) \right)
  \leq  \| \zeta_m g(t, \cdot, u_v(t), \mathcal{L}_{u^0(t)} ) \|^2
         + \| \zeta_m u_v(t) \|^2
            \nonumber \\
  \leq & 4 \int_{\mathbb{R}^n}
                 \zeta^2_m(x) |\psi_g(t,x)|^2
                            \left| u_v(t) \right|^2 dx
         + 4 \int_{\mathbb{R}^n}
                 \zeta^2_m(x) |\psi_g(t,x)|^2 \| u^0(t) \|^2 dx
            \nonumber \\
       & + 2 \int_{\mathbb{R}^n}
                   \zeta^2_m(x) |\psi_g(t,x)|^2 dx
         + \| \zeta_m u_v(t) \|^2
            \nonumber \\
  \leq & \left(1 + 4 \|\psi_g\|_{L^\infty(0,T; L^\infty(\mathbb{R}^n) ) }^2
         \right)
        \| \zeta_m u_v(t) \|^2
            \nonumber \\
       & + 2 \left[ 1+ 2 M_0 \left(1 + \| u_0 \|^2\right)
             \right]
           \int_{|x| \geq \frac{m}{2}} |\psi_g(t,x)|^2 dx.
\end{align}
  For the second term on the right-hand side of \eqref{te1},
 by \eqref{sigmadef} and Theorem \ref{dde-wellposed}, we obtain
\begin{align} \label{te5}
        & 2 \left( \zeta_m \sigma(t, u_v (t), \mathcal{L}_{u^0(t)}) v(t), \zeta_m u_v(t) \right) \nonumber \\
   \leq & \| \zeta_m \sigma(t, u_v (t), \mathcal{L}_{u^0(t)}) \|_{L_2(l^2, H)}^2  \| v(t) \|_{l^2}^2
          + \| \zeta_m u_v(t) \|^2 \nonumber \\
      = & \sum_{k=1}^{\infty}
                       \| \zeta_m \sigma_{1,k} (t)
                          + \zeta_m \kappa \sigma_{2,k}(t, u_v(t), \mathcal{L}_{u^0(t)} )
                       \|^2
           \| v(t) \|_{l^2}^2
          + \| \zeta_m u_v(t) \|^2 \nonumber \\
   \leq & 2 \sum_{k=1}^{\infty}
                       \| \zeta_m \sigma_{1,k} (t) \|^2  \| v(t) \|_{l^2}^2
          + \| \zeta_m u_v(t) \|^2  \nonumber \\
        & + 4 \sum_{k=1}^{\infty}
                            \left[ 2 \beta_{k}^2 \left( 1 + \mathcal{L}_{u^0(t)}( \|\cdot\|^2) \right) \| \zeta_m \kappa \|^2
                                   + \gamma_k^2
                                     \| \kappa \|_{L^\infty(\mathbb{R}^n)}^2
                                     \| \zeta_m u_v(t) \|^2
                            \right]
             \| v(t) \|_{l^2}^2
           \nonumber \\
   \leq & 2 \sum_{k=1}^{\infty}
                       \int_{|x| \geq \frac{m}{2}} |\sigma_{1,k} (t,x)|^2 dx  \| v(t) \|_{l^2}^2
          + \| \zeta_m u_v(t) \|^2  \nonumber \\
        & + 8 \left[ 1 + M_0(1 + \|u_0\|^2) \right]
              \sum_{k=1}^{\infty} \beta_k^2
                                             \int_{|x| \geq \frac{m}{2}} |\kappa(x)|^2 dx
            \| v(t) \|_{l^2}^2
             \nonumber \\
        & + 4 \| \kappa \|_{L^\infty(\mathbb{R}^n)}^2
                   \sum_{k=1}^{\infty} \gamma_k^2 \| \zeta_m u_v(t) \|^2
          \| v(t) \|_{l^2}^2.
\end{align}

 Then from \eqref{te1}-\eqref{te5}, it follows that for  $t \in [0, T]$,
\begin{align} \label{te6}
        \frac{d}{dt} \| \zeta_m u_v(t) \|^2
 \leq & c_1 C(n,\alpha) m^{-\alpha} \| u_v(t) \|^2
        + 2 c_1 m^{-\alpha} \| (-\triangle)^{\frac{\alpha}{2}} u_v(t) \|^2
            \nonumber \\
      & + \left(2 + 4 \|\psi_g\|_{L^\infty(0,T; L^\infty(\mathbb{R}^n) ) }^2
                  + 4 \| \kappa \|_{L^\infty(\mathbb{R}^n)}^2
                      \sum_{k=1}^{\infty} \gamma_k^2 \| v(t) \|_{l^2}^2
          \right)
          \| \zeta_m u_v(t) \|^2
            \nonumber \\
      & + 2 \left[ 1 + M_0( 1 + \| u_0 \|^2) \right] \int_{|x| \geq \frac{m}{2}}
      \left ( |\psi_1(t,x)|
      +
       |\psi_g(t,x)|^2
      \right ) dx
            \nonumber \\
       & + 2 \sum_{k=1}^{\infty}
                       \int_{|x| \geq \frac{m}{2}} |\sigma_{1,k} (t,x)|^2 dx  \| v(t) \|_{l^2}^2
           \nonumber \\
      & + 8 \left[ 1 + M_0(1 + \|u_0\|^2) \right]
              \sum_{k=1}^{\infty} \beta_k^2
                                             \int_{|x| \geq \frac{m}{2}} |\kappa(x)|^2 dx
            \| v(t) \|_{l^2}^2.
\end{align}

 By Gronwall's inequality and \eqref{te6}, we obtain for any $t \in [0, T]$,
\begin{align} \label{te7}
      & \| \zeta_m u_v(t) \|^2 \nonumber \\
 \leq & e^{ c_2 \int_0^T \left( 1 + \| v(r) \|_{l^2}^2 \right) dr }
        \| \zeta_m u_0 \|^2
        + c_1 C(n,\alpha) m^{-\alpha} \int_0^T e^{ c_2 \int_s^T \left( 1 + \| v(r) \|_{l^2}^2 \right) dr } \| u_v(s) \|^2 ds
          \nonumber \\
      & + 2 c_1 m^{-\alpha} \int_0^T e^{ c_2 \int_s^T \left( 1 + \| v(r) \|_{l^2}^2 \right) dr } \| (-\triangle)^{\frac{\alpha}{2}} u_v(s) \|^2 ds
            \nonumber \\
      & + 2 \left[ 1 + 2M_0( 1 + \| u_0 \|^2) \right]
               \int_0^T e^{ c_2 \int_s^T \left( 1 + \| v(r) \|_{l^2}^2 \right) dr }
                       \int_{|x| \geq \frac{m}{2}}
                                \left( |\psi_1(s,x)| + |\psi_g(s,x)|^2 \right)
                       dx
             ds
          \nonumber \\
      & + 2 \int_0^T
      \left (
       e^{ c_2
         \int_s^T \left( 1 + \| v(r) \|_{l^2}^2 \right) dr }
                     \| v(s) \|_{l^2}^2
                       \sum_{k=1}^{\infty}
                         \int_{|x| \geq \frac{m}{2}} |\sigma_{1,k} (s,x)|^2 dx
           \right )  ds
           \nonumber \\
      & + 8 \left[ 1 + M_0(1 + \|u_0\|^2) \right]
          \sum_{k=1}^{\infty} \beta_k^2
             \int_{|x| \geq \frac{m}{2}} |\kappa(x)|^2 dx
                   \int_0^T e^{ c_2 \int_s^T \left( 1 + \| \theta(r) \|_{l^2}^2 \right) dr }
                            \| v(s) \|_{l^2}^2
                   ds,
\end{align}
 where
      $ c_2 = \left( 2 + 4 \|\psi_g\|_{L^\infty(0,T; L^\infty(\mathbb{R}^n) ) }^2
              \right)
              + 4 \| \kappa \|_{L^\infty(\mathbb{R}^n)}^2
                       \sum_{k=1}^{\infty} \gamma_k^2.
      $
For the first term on the right-hand side of \eqref{te7},
 we have
\begin{align} \label{te8}
      e^{ c_2 \int_0^T \left( 1 + \| v(r) \|_{l^2}^2 \right) dr }
      \| \zeta_m u_0 \|^2
 \leq e^{ c_2 (T + R^2)}
        \int_{|x| \geq \frac{m}{2}} | u_0(x) |^2
  \rightarrow  \ 0, \ \ \   \text{as} \ m \to +\infty.
 \end{align}

 For the second and third terms on the right-hand side of \eqref{te7},
 by Theorem \ref{cewellposed}, we obtain
\begin{align} \label{te9}
      & c_1 C(n,\alpha) m^{-\alpha} \int_0^T e^{ c_2 \int_s^T \left( 1 + \| v(r) \|_{l^2}^2 \right) dr } \| u_v(s) \|^2 ds
          \nonumber \\
      & + 2 c_1 m^{-\alpha} \int_0^T e^{ c_2 \int_s^T \left( 1 + \| v(r) \|_{l^2}^2 \right) dr } \| (-\triangle)^{\frac{\alpha}{2}} u_v(s) \|^2 ds
            \nonumber \\
 \leq & c_1 m^{-\alpha}  e^{c_2 (T+ R^2)} \left( C(n,\alpha) \vee 2 \right)
            \int_0^T \left( \| u_v(s) \|^2 + \| (-\triangle)^{\frac{\alpha}{2}} u_v(s) \|^2 \right) ds
            \nonumber \\
 \leq & c_1 m^{-\alpha}  e^{c_2 (T+ R^2)} \left( C(n,\alpha) \vee 2  \right) M_2
            \nonumber \\
 \rightarrow & \ 0,  \ \ \   \text{as} \ m \to +\infty.
\end{align}

 For the fourth term on the right-hand side of \eqref{te7},
 by $\psi_1 \in L_{loc}^1(\mathbb{R}, L^1(\mathbb{R}^n))$
 and $\psi_g \in L_{loc}^2(\mathbb{R}, L^2(\mathbb{R}^n))$, we obtain
\begin{align} \label{te10}
       & 2 \left[ 1 + 2M_0( 1 + \| u_0 \|^2) \right]
               \int_0^T e^{ c_2 \int_s^T \left( 1 + \| v(r) \|_{l^2}^2 \right) dr }
                       \int_{|x| \geq \frac{m}{2}}
                                \left( |\psi_1(s,x)| + |\psi_g(s,x)|^2 \right)
                       dx
               ds
             \nonumber \\
  \leq &  2 \left[ 1 + 2M_0( 1 + \| u_0 \|^2) \right]
           e^{ c_2 \left( T + R^2 \right) }
               \int_0^T
                        \int_{|x| \geq \frac{m}{2}}
                                \left( |\psi_1(s,x)| + |\psi_g(s,x)|^2 \right)
                        dx
                ds
          \nonumber \\
 \rightarrow & \ 0,  \ \ \   \text{as} \ m \to +\infty.
\end{align}

 For the fifth term on the right-hand side of \eqref{te7}, we have
\begin{align} \label{te11}
      & 2 \int_0^T
      \left (
      e^{ c_2 \int_s^T \left( 1 + \| v(r) \|_{l^2}^2 \right) dr }
                     \| v(s) \|_{l^2}^2
                       \sum_{k=1}^{\infty}
                         \int_{|x| \geq \frac{m}{2}} |\sigma_{1,k} (s,x)|^2 dx
                         \right )
          ds
          \nonumber \\
 \leq & 2 e^{ c_2 \left( T + R^2 \right) }
             \int_0^T
             \left (  \| v(s) \|_{l^2}^2
                       \sum_{k=1}^{\infty}
                          \int_{|x| \geq \frac{m}{2}} |\sigma_{1,k} (s,x)|^2 dx
                          \right )
             ds.
\end{align}

 Since $\sigma_{1} :[0,T]\rightarrow L^2(\mathbb{R}^n, l^2)$ is continuous due to   $({\bf \Sigma 3})$,
 the set $\{ \sigma_{1} (s): s\in [0,T] \}$ is a compact subset of $L^2(\mathbb{R}^n, l^2)$,
 which implies that for any $\delta  >0$,
 there exists $k \in \mathbb{N}$ and $s_1, \cdots, s_k \in [0,T]$
 such that the open balls centered at $\sigma_1(s_i) \in L^2(\mathbb{R}^n, l^2)$ $(i=1,\cdots,k)$ with radius
 $\frac{\delta }{2}$ form a finite open cover of the set
 $\{ \sigma_{1} (s): s \in [0,T] \}$ in $L^2(\mathbb{R}^n, l^2)$.
 Therefore, we infer that there exists $m_0>0$ such that for every $m \geq m_0$ and $s \in [0,T]$
\begin{align} \label{te12}
       \left( \int_{|x| \geq \frac{m}{2}}
                 \sum_{k=1}^{\infty}
                    | \sigma_{1,k}(s,x) |^2 dx
       \right)^{\frac{1}{2}}
       < \delta.
\end{align}
By \eqref{te11} and \eqref{te12}, we obtain
\begin{align} \label{te13}
      & 2 \int_0^T e^{ c_2 \int_s^T \left( 1 + \| v(r) \|_{l^2}^2 \right) dr }
                     \| v(s) \|_{l^2}^2
                       \sum_{k=1}^{\infty}
                         \int_{|x| \geq \frac{m}{2}} |\sigma_{1,k} (s,x)|^2 dx
          ds
          \nonumber \\
 \leq & 2 \delta  e^{ c_2 \left( T + R^2 \right) }
                       \int_0^T \| v(s) \|_{l^2}^2 ds
 \leq  2 \delta  R^2  e^{ c_2 \left( T + R^2 \right) }.
\end{align}
 For the sixth term on the right-hand side of \eqref{te7},
 by $\kappa \in L^2(\mathbb{R}^n)$  we get
\begin{align} \label{te14}
       & 8 \left[ 1 + M_0(1 + \|u_0\|^2) \right]
              \sum_{k=1}^{\infty} \beta_k^2
                   \int_{|x| \geq \frac{m}{2}} |\kappa(x)|^2 dx
                         \int_0^T e^{ c_2 \int_s^T \left( 1 + \| \theta(r) \|_{l^2}^2 \right) dr }
                                  \| v(s) \|_{l^2}^2
                         ds
                           \nonumber \\
  \leq & 8 \left[ 1 + M_0(1 + \|u_0\|^2) \right]
           R^2 e^{ c_2 \left(T + R^2\right) }
               \sum_{k=1}^{\infty} \beta_k^2
                    \int_{|x| \geq \frac{m}{2}} |\kappa(x)|^2 dx
                            \nonumber \\
  \rightarrow & \ 0, \ \ \   \text{as} \ m \to +\infty.
\end{align}

 It follows from \eqref{te7}-\eqref{te10} and \eqref{te13}-\eqref{te14} that there exists
     $m_1 = m_1 (u_0, T, R,  \delta) > 0$
 such that for all $m \geq m_1$ and $t \in [0, T]$,
\begin{align*}
      \int_{|x| \geq m} |u_v(t,x)|^2 dx < c_2 \delta,
\end{align*}
where $c_2=c_2(u_0, T, R)>0$
is independent of $\delta$,
as desired.
\end{proof}

\section{Large deviations of    McKean-Vlasov stochastic equations}

 In this section,
 we   prove
 the LDP  of
 the  solutions
 of the
 fractional McKean-Vlasov stochastic reaction-diffusion equation
   \eqref{mvsde-1}-\eqref{mvsde-1 IV}
 on $\mathbb{R}^n$
 by the weak convergence method.

\subsection{Weak convergence theory
    of      large deviations}

 We now review the  basic concepts and results
 of the weak converge theory
 for the LDP.

 Let $\mathbb{S}$ be a polish space.
 For any $\varepsilon>0$,
 let $\mathcal{G}^\varepsilon:
                C([0,T],U)\rightarrow\mathbb{S}$
 be a measurable map.
 Denote by
    $ X^\varepsilon = \mathcal{G}^\varepsilon(W) $
 for all $\varepsilon > 0$.

\begin{definition}
 A function $I: \mathbb{S} \rightarrow [0,\infty]$ is called a rate function on $\mathbb{S}$
 if it is lower semicontinuous in $\mathbb{S}$.
 A rate function $I$ on $\mathbb{S}$ is said to be a good rate function on $\mathbb{S}$
 if for every $0\leq C<\infty$,
 the level set $\{ x \in \mathbb{S}: I(x) \leq C \}$ is a compact subset of $\mathbb{S}$.
\end{definition}

\begin{definition}
  The family $\{X^\varepsilon\}$ is said to satisfy the LDP
   in $\mathbb{S}$
  with a rate function $I:\mathbb{S}\rightarrow[0,\infty]$,
  if for every Borel subset $A$ of $\mathbb{S}$,
  \begin{align*}
   -\inf_{x \in A^\circ} I(x)
   \leq \liminf_{\varepsilon \rightarrow 0} \varepsilon \log \mathbb{P}(X^\varepsilon \in A)
   \leq \limsup_{\varepsilon \rightarrow 0} \varepsilon \log \mathbb{P}(X^\varepsilon \in A)
   \leq -\inf_{x \in \overline{A}} I(x),
 \end{align*}
 where $A^\circ$ and $\overline{A}$ are the interior and the closure of $A$ in $\mathbb{S}$, respectively.
\end{definition}

Given $N > 0$, denote by
\begin{equation*}
    S_N = \left\{v \in L^2(0,T; l^2): \int_{0}^T \|v(t)\|^2_{l^2} dt \leq N \right\}.
\end{equation*}
 Then $S_N$ is a polish space endowed with the weak topology.
 Let $\mathcal{A}$ be the space of all $l^2$-valued stochastic processes $v$
 which are progressively measurable with respect to $\{\mathcal{F}_t\}_{t\in [0,T]}$
 and $\int_{0}^{T} \|v(t)\|_{l^2}^2 dt < \infty$ $\mathbb{P}$-almost surely.
 Denote by
   $\mathcal{A}_N = \{ v \in \mathcal{A} : v(\omega) \in S_N\
                         \text{for almost all }\ \omega\in \Omega \}$.

 In order to prove the LDP
  of $X^\varepsilon$,
 we will assume that the family
    $\left\{\mathcal{G}^\varepsilon\right\}$
 fulfills the following conditions:
 there exists a measurable map $\mathcal{G}^0: C([0,T],U) \rightarrow \mathbb{S}$
 such that:

 $({\bf H1})$
    For every $N < \infty$ and
    sequence $\{v_i : i \in \mathbb{N} \} \subseteq S_N$,
    if   $v_i$ converges to
     $v$ in $ S_N$ as $\varepsilon \rightarrow 0$, then
       $\mathcal{G}^0 \big( \int_0^\cdot v_i(s) ds \big)$
    converges to
       $\mathcal{G}^0 \big( \int_0^\cdot v(s) ds \big)$
    in  $\mathbb{S}$.

 $({\bf H2})$
    For every $N < \infty$, $\{v^\varepsilon : \varepsilon > 0\} \subseteq \mathcal{A}^N$ and $\delta > 0$,
    $$ \lim_{\varepsilon \to 0}
             \mathbb{P}
                  \left( \rho
                             \Big( \mathcal{G}^\varepsilon
                                          \big( W + \varepsilon^{-\frac{1}{2}} \int_{0}^{\cdot} v^\varepsilon(t) dt
                                          \big), \
                                    \mathcal{G}^0
                                          \big( \int_{0}^{\cdot} v^\varepsilon(t) dt
                                          \big)
                             \Big) > \delta
                  \right)
        = 0,
    $$
 where $\rho(\cdot, \cdot)$ stands for the metric in  $\mathbb{S}$.

 We recall the following theorem
 from  \cite[Theorem 3.2]{msz2021AMO} and \cite[Theorem 4.4]{lszz2022PA}
 which is essentially   established by Budhiraja and Dupuis \cite[Theorem 4.4]{bd2000PMS}.

\begin{prop}\cite{msz2021AMO, lszz2022PA}\label{ldpth}
  Under $({\bf H1})$-$({\bf H2})$,
  the family $\{X^\varepsilon\}$ satisfies the LDP  in $\mathbb{S}$
  with rate function $I$ given by,
  \begin{align*}
       I(x) = \inf_{v \in L^2(0,T; l^2): x = \mathcal{G}^0 \big( \int_{0}^{\cdot} v(t)dt \big) }
                   \left\{ \frac{1}{2} \int_{0}^{T} \|v(t)\|_{l^2}^2 dt
                   \right\}, \ \ \   x \in \mathbb{S},
  \end{align*}
  with  convention   $\inf \{ \emptyset \} = \infty$.
\end{prop}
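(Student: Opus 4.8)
The plan is to treat this as a specialization of the abstract Budhiraja--Dupuis criterion, so that the argument reduces to the standard bookkeeping of the weak convergence method as in \cite{bd2000PMS, msz2021AMO, lszz2022PA}. On a Polish space $\mathbb{S}$, the LDP with a good rate function is equivalent to the Laplace principle, so it suffices to prove that $I$ defined in the statement is a good rate function and that, for every bounded continuous $F:\mathbb{S}\to\mathbb{R}$,
\begin{align*}
  \lim_{\varepsilon\to0}\Big(-\varepsilon\log\mathbb{E}\big[e^{-F(X^\varepsilon)/\varepsilon}\big]\Big)
     = \inf_{x\in\mathbb{S}}\big\{F(x)+I(x)\big\}.
\end{align*}
The key tool is the variational representation for positive functionals of the cylindrical Wiener process $W$: for each $\varepsilon>0$,
\begin{align*}
  -\varepsilon\log\mathbb{E}\big[e^{-F(X^\varepsilon)/\varepsilon}\big]
     = \inf_{v\in\mathcal{A}}\mathbb{E}\Big[\tfrac12\int_0^T\|v(s)\|_{l^2}^2\,ds
        + F\big(\mathcal{G}^\varepsilon(W+\varepsilon^{-1/2}\textstyle\int_0^\cdot v(s)\,ds)\big)\Big].
\end{align*}

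First I would establish that $I$ is a good rate function. Using $({\bf H1})$, the map $S_N\ni v\mapsto\mathcal{G}^0(\int_0^\cdot v(s)\,ds)$ is continuous for the weak topology, and $S_N$ is weakly compact; a standard argument then identifies the level set $\{x:I(x)\leq C\}$ with the image of the weakly compact set $S_{2C}$ under this continuous map (using lower semicontinuity of the $L^2$-norm under weak convergence to pass to the limit in minimizing sequences), so each level set is compact and $I$ is good.

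Next, for the Laplace \emph{upper} bound $\limsup\leq\inf\{F+I\}$, given $x^\ast$ with $F(x^\ast)+I(x^\ast)$ within $\eta$ of the infimum, I would choose a deterministic $v^\ast$ with $x^\ast=\mathcal{G}^0(\int_0^\cdot v^\ast)$ and $\tfrac12\int_0^T\|v^\ast\|_{l^2}^2\leq I(x^\ast)+\eta$, use it as an admissible control in the representation, and invoke $({\bf H2})$ together with the bounded continuity of $F$ to pass to the limit. For the Laplace \emph{lower} bound $\liminf\geq\inf\{F+I\}$, I would take near-optimal random controls $v^\varepsilon$, note that boundedness of $F$ forces $\tfrac12\mathbb{E}\int_0^T\|v^\varepsilon\|_{l^2}^2$ to stay bounded so that after truncation $v^\varepsilon\in\mathcal{A}_N$, apply $({\bf H2})$ to replace $\mathcal{G}^\varepsilon(W+\varepsilon^{-1/2}\int_0^\cdot v^\varepsilon)$ by $\mathcal{G}^0(\int_0^\cdot v^\varepsilon)$ up to a vanishing error, and finally use the pathwise inequality $\tfrac12\int_0^T\|v^\varepsilon\|_{l^2}^2\geq I(\mathcal{G}^0(\int_0^\cdot v^\varepsilon))$ coming directly from the definition of $I$. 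Combining the two bounds yields the Laplace principle, hence the LDP.

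The main obstacle is the simultaneous passage to the limit in the representation, where the controls converge only \emph{weakly} in $L^2(0,T;l^2)$ while the controlled dynamics must converge \emph{strongly} in $\mathbb{S}$; this decoupling is exactly what $({\bf H1})$ (strong convergence of the skeleton under weak convergence of controls) and $({\bf H2})$ (uniform smallness of the discrepancy between the stochastic controlled process and the skeleton) are designed to supply. Since both hypotheses are assumed here, no estimate beyond those already recorded in \cite{bd2000PMS, msz2021AMO, lszz2022PA} is required, and the result follows; the genuine analytic work is deferred to the verification of $({\bf H1})$ and $({\bf H2})$ for \eqref{mvsde-1}, which is where the tail-ends estimates of Lemma \ref{tailestim} enter.
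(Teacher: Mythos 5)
The paper does not prove Proposition \ref{ldpth} at all: it is stated as a quotation of \cite[Theorem 3.2]{msz2021AMO} and \cite[Theorem 4.4]{lszz2022PA}, themselves adaptations of the Budhiraja--Dupuis criterion \cite{bd2000PMS}, and the authors' own work consists entirely of verifying $({\bf H1})$ and $({\bf H2})$ for the specific system \eqref{mvsde-1}. Your proposal therefore supplies an argument where the paper supplies none, and what you sketch is a faithful reproduction of the standard weak-convergence proof from those references: equivalence of the LDP with the Laplace principle for good rate functions, the variational representation of $-\varepsilon\log\mathbb{E}[e^{-F(X^\varepsilon)/\varepsilon}]$ over admissible controls, goodness of $I$ via weak compactness of $S_N$ together with $({\bf H1})$ and weak lower semicontinuity of the $L^2$-norm, the upper bound via a near-optimal deterministic control, and the lower bound via truncation of near-optimal random controls followed by $({\bf H2})$ and the pathwise inequality $\tfrac12\int_0^T\|v^\varepsilon\|_{l^2}^2\,dt\geq I\big(\mathcal{G}^0(\int_0^\cdot v^\varepsilon)\big)$. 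The outline is correct; the two places where you are silent but where the cited proofs do real work are (i) the truncation error in the lower bound, controlled by Chebyshev's inequality using the uniform bound $\mathbb{E}\big[\tfrac12\int_0^T\|v^\varepsilon\|_{l^2}^2\,dt\big]\leq 2\|F\|_\infty+1$, and (ii) passing from convergence in probability in $({\bf H2})$ to convergence of $\mathbb{E}[F(\cdot)]$ when $F$ is merely continuous and bounded, which uses that the skeletons $\mathcal{G}^0(\int_0^\cdot v^\varepsilon)$ take values in the compact image of $S_N$ so that $F$ is uniformly continuous on a neighborhood of that set. Neither gap is a flaw in the approach, only in the level of detail; since the proposition is imported rather than proved in this paper, your sketch is an acceptable substitute for the citation.
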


 \subsection{Main results}

 In this subsection,
 we present the   result
 on   the LDP
 of  \eqref{mvsde-2}-\eqref{mvsde-2 IV} in
 $ C([0,T], H)  \bigcap  L^2(0,T; V) $
 as follows.

 \begin{theorem}\label{main}
    Suppose that $({\bf \Sigma 1})$-$({\bf \Sigma 3})$ hold,
    and $u^\varepsilon$ is the solution of \eqref{mvsde-2}-\eqref{mvsde-2 IV}.
    Then as $\varepsilon \rightarrow 0$, the family $\{ u^\varepsilon \}$
    satisfies the LDP in
    $ C([0,T], H)  \bigcap  L^2(0,T; V) $
    with   good rate function given by
    \begin{align}\label{rate}
        I(\phi)
        = \inf
        \left\{ \frac{1}{2} \int_{0}^{T}\| v(t) \|_{l^2}^2dt :
        v \in L^2(0,T; l^2),\  u_v = \phi
        \right\},
    \end{align}
    where
    $\phi \in C([0,T], H) \bigcap L^2(0,T; V)$,
    and $u_v$ is the solution to   \eqref{contequ}-\eqref{contequ IV}.
    As usual, the infimum of the empty set is taken to be $\infty$.
 \end{theorem}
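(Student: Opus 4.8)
The plan is to invoke the weak convergence criterion of Proposition \ref{ldpth} with $\mathbb{S}=C([0,T],H)\cap L^2(0,T;V)$, so the task reduces to identifying the maps $\mathcal{G}^\varepsilon,\mathcal{G}^0$ and verifying $(\mathbf{H1})$ and $(\mathbf{H2})$. Since each law $\mathcal{L}_{u^\varepsilon(t)}$ is deterministic, the solution $u^\varepsilon$ of Theorem \ref{mvsde-wellposed}, once its flow of laws $\mu^\varepsilon(\cdot):=\mathcal{L}_{u^\varepsilon(\cdot)}$ is frozen, solves a decoupled distribution-independent stochastic equation; I would take $\mathcal{G}^\varepsilon$ to be the solution map of this decoupled equation, so that $u^\varepsilon=\mathcal{G}^\varepsilon(W)$, and set $\mathcal{G}^0\big(\int_0^\cdot v\,ds\big)=u_v$ with $u_v$ furnished by Theorem \ref{cewellposed}; the rate function \eqref{rate} then coincides with that of Proposition \ref{ldpth}. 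A preliminary ingredient I would establish is that the uncontrolled flow of laws collapses to the deterministic limit, namely $\sup_{t\in[0,T]}\mathbb{W}_2\big(\mathcal{L}_{u^\varepsilon(t)},\delta_{u^0(t)}\big)\to 0$ as $\varepsilon\to 0$, where $u^0$ solves \eqref{dde}-\eqref{dde IV}. This follows by applying It\^o's formula to $\|u^\varepsilon-u^0\|^2$, splitting the drift difference as in the well-posedness proof and using \eqref{f-2}, \eqref{f-4}, the Lipschitz bounds \eqref{g-2} and \eqref{sigmalip}, together with the self-consistent bound $\mathbb{W}_2(\mathcal{L}_{u^\varepsilon(t)},\delta_{u^0(t)})^2\le \mathbb{E}\|u^\varepsilon(t)-u^0(t)\|^2$ and Gronwall's inequality, the $\sqrt{\varepsilon}$-noise contribution vanishing in expectation; this is exactly what justifies freezing the law at $\mathcal{L}_{u^0(t)}$ in the controlled equation.

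For $(\mathbf{H1})$, given $v_i\rightharpoonup v$ weakly in $S_N$, I would prove $u_{v_i}\to u_v$ strongly in $\mathbb{S}$. The uniform bounds \eqref{dceue} give, along a subsequence, weak limits in $L^2(0,T;V)$ and $L^p(0,T;L^p(\mathbb{R}^n))$ and weak-$\ast$ limits in $L^\infty(0,T;H)$. The decisive point is upgrading to strong convergence on the unbounded domain: the uniform tail estimate of Lemma \ref{tailestim} makes the $L^2$-mass of $u_{v_i}(t)$ on $\{|x|\ge m\}$ arbitrarily small uniformly in $i$ and $t$, while on the ball $\{|x|<m\}$ the embedding $H^\alpha\hookrightarrow L^2$ is compact; combined with time-equicontinuity read off from the equation (an Aubin--Lions / Arzel\`a--Ascoli argument), this yields strong $L^2$ convergence locally and hence, via the uniform tails, strong convergence in $C([0,T],H)$. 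Strong convergence then lets me pass to the limit in the nonlinear drift $f$ of arbitrary polynomial degree and in the control--diffusion term $\sigma(t,u_{v_i},\mathcal{L}_{u^0(t)})v_i$ (pairing the weakly convergent $v_i$ against the now strongly convergent $u_{v_i}$), and the limit is identified as $u_v$ by the uniqueness in Theorem \ref{cewellposed}.

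For $(\mathbf{H2})$, write $\tilde u^\varepsilon:=\mathcal{G}^\varepsilon\big(W+\varepsilon^{-1/2}\int_0^\cdot v^\varepsilon\,dt\big)$; by Girsanov's theorem $\tilde u^\varepsilon$ satisfies the controlled stochastic equation with control $v^\varepsilon$, noise $\sqrt{\varepsilon}\,dW$, and frozen law $\mu^\varepsilon(t)=\mathcal{L}_{u^\varepsilon(t)}$. I would control $\tilde u^\varepsilon-u_{v^\varepsilon}$ by It\^o's formula: the drift differences are handled by \eqref{f-4} and \eqref{g-2}, the discrepancy between the laws $\mu^\varepsilon(t)$ and $\delta_{u^0(t)}$ is absorbed using the preliminary law-convergence above, the control term $\sigma v^\varepsilon$ is estimated via \eqref{sigmalip} and the $S_N$-bound on $v^\varepsilon$, and the genuinely stochastic term carries a factor $\sqrt{\varepsilon}$ whose contribution vanishes after a Burkholder--Davis--Gundy estimate. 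Gronwall's inequality then yields $\tilde u^\varepsilon\to u_{v^\varepsilon}$ in probability in $\mathbb{S}$, which is $(\mathbf{H2})$.

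With $(\mathbf{H1})$ and $(\mathbf{H2})$ verified, Proposition \ref{ldpth} delivers the LDP with rate function \eqref{rate}, the goodness of $I$ following from $(\mathbf{H1})$. I expect the main obstacle to be $(\mathbf{H1})$: because $f$ grows polynomially of arbitrary degree, passing to the limit in the nonlinear term genuinely requires strong convergence of $u_{v_i}$, so the non-compactness of the Sobolev embedding on $\mathbb{R}^n$ must be circumvented entirely through the uniform tail estimate of Lemma \ref{tailestim}; reconciling the local compactness on balls with the uniform spatial decay is the technical heart of the argument.
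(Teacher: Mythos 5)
Your proposal is correct and follows essentially the same route as the paper: the decoupled map $\mathcal{G}^\varepsilon$ obtained by freezing the law $\mathcal{L}_{u^\varepsilon(t)}$, the preliminary convergence $\mathbb{E}\big[\sup_{t}\|u^\varepsilon(t)-u^0(t)\|^2\big]\le C\varepsilon(1+\|u_0\|^2)$ (the paper's Lemma \ref{sdc}), the verification of $({\bf H1})$ via the uniform tail estimates of Lemma \ref{tailestim} combined with local compactness and equicontinuity in $\left(V\bigcap L^p(\mathbb{R}^n)\right)^\ast$, and the verification of $({\bf H2})$ via It\^{o}'s formula, the Burkholder--Davis--Gundy inequality and Gronwall's lemma. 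The only detail you elide is the localization by the stopping times $\tau_R^\varepsilon$ (together with the uniform moment bound of Lemma \ref{pre-h2}) needed to convert the pathwise Gronwall estimate in $({\bf H2})$ into convergence in probability, but this is a routine completion of the argument you describe.
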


 To prove
 Theorem \ref{main}, we will
 verify the conditions of Proposition \ref{ldpth},
 for which we need the following lemma.

 \begin{lemma} \label{measmap}
    Suppose $({\bf \Sigma 1})$-$({\bf \Sigma 3})$ hold.
    Then for any $\varepsilon \in (0,1)$ and $u_0 \in H$,
    there exists a Borel-measurable map
    {\small
        $$
        \mathcal{G}^\varepsilon{\large }:
        \    C([0,T],U)
        \rightarrow
        L^2(\Omega, C([0,T], H) )
        \bigcap
        L^2(\Omega, L^2(0,T; V) )
        \bigcap
        L^p(\Omega, L^p(0,T; L^p(\mathbb{R}^n ) ) )
        $$
    }
    such that
    $ \mathcal{G}^{\varepsilon} (W)= u^\varepsilon$
    almost surely,
    where $u^\varepsilon$ is the solution to
    \eqref{mvsde-2}-\eqref{mvsde-2 IV}.

    Furthermore,    for every control $v \in L^2(0,T; l^2) $,
    $u_v^\varepsilon
    = \mathcal{G}^\varepsilon \left( W + \varepsilon^{-\frac{1}{2}} \int_{0}^{\cdot} v(t) dt \right)$
    is the unique solution of
    \begin{equation}\label{sce}
        \left\{
        \begin{aligned}
            & d u_v^\varepsilon (t)
            + (-\triangle)^\alpha u_v^\varepsilon (t) dt
            + f(t, \cdot, u_v^\varepsilon(t), \mathcal{L}_{u^\varepsilon(t)} ) dt
            \\
            & = g(t, \cdot, u_v^\varepsilon(t), \mathcal{L}_{u^\varepsilon(t)} ) dt
            + \sigma(t, u_v^\varepsilon (t), \mathcal{L}_{u^\varepsilon(t)} ) v(t) dt
            + \sqrt{\varepsilon} \sigma(t, u_v^\varepsilon(t), \mathcal{L}_{u^\varepsilon(t)} ) dW(t),
            \\
            & u_v^\varepsilon (0) = u_0 \in H.
        \end{aligned}
        \right.
    \end{equation}
 \end{lemma}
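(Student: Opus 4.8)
The plan is to combine the well-posedness of \eqref{mvsde-2}-\eqref{mvsde-2 IV} furnished by Theorem \ref{mvsde-wellposed} with the Yamada--Watanabe theory and a Girsanov transformation. The conceptual crux, which makes everything work, is that for the McKean--Vlasov system, once existence and uniqueness are known, the law $\mathcal{L}_{u^\varepsilon(t)}$ is a \emph{deterministic} path in time, so the equation satisfied by $u^\varepsilon$ is in fact a standard, law-independent stochastic equation with frozen coefficients. Concretely, let $u^\varepsilon$ be the unique solution of \eqref{mvsde-2}-\eqref{mvsde-2 IV} with $u_0 \in H$, and set $\mu^\varepsilon(t) = \mathcal{L}_{u^\varepsilon(t)}$, which belongs to $C([0,T],\mathcal{P}_2(H))$ and is independent of $\omega$. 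Freezing the law in the coefficients, I would consider the standard stochastic equation driven by $W$ with drift $f(t,\cdot,\cdot,\mu^\varepsilon(t))$, $g(t,\cdot,\cdot,\mu^\varepsilon(t))$ and diffusion $\sigma(t,\cdot,\mu^\varepsilon(t))$; this is precisely the type of equation treated in \cite[Theorem 6.3]{wangb2019JDE}, and by construction $u^\varepsilon$ is its unique solution. Since weak existence and pathwise uniqueness hold for this frozen equation, the Yamada--Watanabe theorem yields a Borel-measurable map $\mathcal{G}^\varepsilon$ on $C([0,T],U)$ into the stated solution space such that $u^\varepsilon = \mathcal{G}^\varepsilon(W)$ almost surely.

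For the second assertion, fix $v \in L^2(0,T;l^2)$ and set $B(t) = W(t) + \varepsilon^{-\frac12}\int_0^t v(s)\,ds$. I would introduce the measure $\mathbb{Q}$ defined by the Radon--Nikodym density
$$
\frac{d\mathbb{Q}}{d\mathbb{P}} = \exp\left(-\varepsilon^{-\frac12}\int_0^T v(s)\,dW(s) - \frac{1}{2\varepsilon}\int_0^T \|v(s)\|_{l^2}^2\,ds\right),
$$
whose exponential moment is finite because $v$ is deterministic with $\int_0^T\|v(s)\|_{l^2}^2\,ds<\infty$, so Novikov's condition holds trivially. By Girsanov's theorem, $B$ is a cylindrical Wiener process in $l^2$ under $\mathbb{Q}$. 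Since $\mathcal{G}^\varepsilon$ is the strong-solution functional of the frozen equation (hence maps any driving Wiener process to the corresponding solution), $u_v^\varepsilon := \mathcal{G}^\varepsilon(B)$ solves, $\mathbb{Q}$-almost surely, the frozen equation driven by $B$. Substituting $dB = dW + \varepsilon^{-\frac12} v\,dt$ converts the stochastic term into $\sqrt{\varepsilon}\,\sigma(t,u_v^\varepsilon(t),\mu^\varepsilon(t))\,dW(t) + \sigma(t,u_v^\varepsilon(t),\mu^\varepsilon(t))\,v(t)\,dt$, which is exactly \eqref{sce}, using that $\mu^\varepsilon = \mathcal{L}_{u^\varepsilon}$. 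Because $\mathbb{Q}$ is equivalent to $\mathbb{P}$, the identity also holds $\mathbb{P}$-almost surely, and uniqueness of \eqref{sce} follows from the same pathwise-uniqueness estimate (of the type leading to \eqref{dcelip}) applied to the controlled stochastic equation.

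The main obstacle I expect lies not in the Girsanov computation but in the careful verification that the frozen equation falls within the scope of the Yamada--Watanabe framework in this infinite-dimensional variational setting with a drift $f$ of arbitrary polynomial growth: one must establish pathwise uniqueness together with the joint measurability needed to produce a \emph{single} Borel map $\mathcal{G}^\varepsilon$ valid on all of $C([0,T],U)$. The decisive observation that tames this difficulty is that the controlled equation \eqref{sce} retains the law $\mathcal{L}_{u^\varepsilon(t)}$ of the \emph{uncontrolled} solution, and not that of $u_v^\varepsilon$, so that freezing this deterministic law reduces the whole McKean--Vlasov problem to a classical, law-independent stochastic equation to which the standard representation and change-of-measure machinery applies directly.
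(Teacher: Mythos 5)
Your proposal is correct and follows essentially the same route as the paper: both freeze the deterministic law $\mu^\varepsilon(t)=\mathcal{L}_{u^\varepsilon(t)}$ to reduce the McKean--Vlasov equation to a classical law-independent equation, obtain the Borel-measurable solution map $\mathcal{G}^\varepsilon=\mathcal{G}^\varepsilon_{\mu^\varepsilon}$ from the well-posedness of that decoupled equation (the paper cites \cite[Theorem 3.6]{lszz2022PA} where you invoke Yamada--Watanabe, which is the same underlying machinery), and then identify $\mathcal{G}^\varepsilon\big(W+\varepsilon^{-1/2}\int_0^\cdot v(t)\,dt\big)$ with the solution of \eqref{sce}. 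Your explicit Girsanov computation merely spells out a step the paper leaves implicit.
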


 \begin{proof}
    For a fixed $\mu\in
    C([0,T], \mathcal{P}_2(H))$,
    $\varepsilon \in (0,1)$
    and $u_0\in H$,
    consider the
    decoupled stochastic equation:
    \begin{align} \label{dcsde-1}
        & d y_\mu ^\varepsilon(t) + (-\triangle)^\alpha y_\mu^\varepsilon(t) dt
        + f(t, \cdot,  y_\mu^\varepsilon (t), \mu (t) ) dt
        \nonumber \\
        = & g(t, \cdot, y_\mu^\varepsilon (t), \mu (t) ) dt
        + \sqrt{\varepsilon}
        \sigma(t, y_\mu^\varepsilon (t), \mu (t) ) dW(t),
    \end{align}
    with initial data
    \begin{align} \label{dcsde-1 IV}
        y_\mu ^\varepsilon(0) = u_0.
    \end{align}

    Since
    $\mu\in
    C([0,T], \mathcal{P}_2(H))$
is fixed, problem
    \eqref{dcsde-1}-\eqref{dcsde-1 IV}
    is similar to the classical
    stochastic equation independent of
    distributions as discussed in
    \cite{wangb2019JDE}, which means that
    under conditions
    $({\bf \Sigma 1})$-$({\bf \Sigma 3})$,
    for every
    $\varepsilon \in (0,1)$
    and $u_0\in H$, problem
    problem
    \eqref{dcsde-1}-\eqref{dcsde-1 IV}
    has a unique solution
    $$y_\mu^\varepsilon
    \in  L^2(\Omega, C([0,T], H) )
    \bigcap
    L^2(\Omega, L^2(0,T; V) )
    \bigcap
    L^p(\Omega, L^p(0,T; L^p(\mathbb{R}^n ) ) ).
    $$
    Consequently,
    there exists   a Borel-measurable map
    (see, e.g.,
    \cite[Theorem 3.6]{lszz2022PA}):
    {\small
        $$  \mathcal{G}
        _ \mu ^\varepsilon : \   C([0,T],U)
        \rightarrow
        L^2(\Omega, C([0,T], H) )
        \bigcap
        L^2(\Omega, L^2(0,T; V) )
        \bigcap
        L^p(\Omega, L^p(0,T; L^p(\mathbb{R}^n ) ) )
        $$
    }
    such that
    $y^\varepsilon_\mu=
    \mathcal{G}
    _ \mu ^\varepsilon (W)$.

    Let
    $u^\varepsilon$
    be the solution of
    \eqref{mvsde-2}-\eqref{mvsde-2 IV}
    and   $\mu^\varepsilon(t)  = \mathcal{L}_{u^\varepsilon(t)}$
    for $t\in [0,T]$.
    Then by Theorem \ref{mvsde-wellposed},
    we find that
    $u^\varepsilon
    =y^\varepsilon_{\mu^\varepsilon}$
    and hence
    $
    u^\varepsilon
    =  \mathcal{G}^\varepsilon
    _ {\mu ^\varepsilon} (W)
    $.
    Denote by
    $\mathcal{G}^\varepsilon
    =  \mathcal{G}^\varepsilon
    _ {\mu ^\varepsilon}$.
    Then
    $u^\varepsilon
    =  \mathcal{G}^\varepsilon
    (W)$, and  for every control $v \in L^2(0,T; l^2) $,
    $u_v^\varepsilon
    = \mathcal{G}^\varepsilon \left( W + \varepsilon^{-\frac{1}{2}} \int_{0}^{\cdot} v(t) dt \right)$
    is the unique solution of
    \begin{equation*}
        \left\{
        \begin{aligned}
            & d u_v^\varepsilon (t)
            + (-\triangle)^\alpha u_v^\varepsilon (t) dt
            + f(t, \cdot, u_v^\varepsilon(t), \mathcal{L}_{u^\varepsilon(t)} ) dt
            \\
            & = g(t, \cdot, u_v^\varepsilon(t), \mathcal{L}_{u^\varepsilon(t)} ) dt
            + \sigma(t, u_v^\varepsilon (t), \mathcal{L}_{u^\varepsilon(t)} ) v(t) dt
            + \sqrt{\varepsilon} \sigma(t, u_v^\varepsilon(t), \mathcal{L}_{u^\varepsilon(t)} ) dW(t),
            \\
            & u_v^\varepsilon (0) = u_0 \in H,
        \end{aligned}
        \right.
    \end{equation*}
    which concludes the
    proof.
 \end{proof}


 Now, define
 $\mathcal{G}^0: C([0,T],U)
 \rightarrow
 C([0,T], H) \bigcap L^2(0,T; V)
 \bigcap
 L^p(\Omega, L^p(0,T; L^p(\mathbb{R}^n ) ) )$
 by, for every $\zeta\in C([0,T],U)$,
 \begin{equation}\label{G0}
    \mathcal{G}^0(\zeta)
    = \left\{
    \begin{array} {ll}
        & u_v, \ \ \  \text{if}\  \zeta = \int_{0}^{\cdot} v(t) dt \ \text{for\ some}\  v \in L^2(0,T; l^2),\\
        & 0,\ \ \  \text{otherwise},
    \end{array}
    \right.
 \end{equation}
 where $u_v$ is the solution to \eqref{contequ}-\eqref{contequ IV}.

 In the next subsection, we will
 show  $\mathcal{G}^0$
 satisfies $({\bf H1})$ in Proposition
 \ref{ldpth}.

 \subsection{Convergence of solutions of controlled equations}

 We now prove that    $\mathcal{G}^0$
 satisfies
 the condition  $({\bf H1})$; more precisely, we will show  the
 convergence of  solutions
 of the controlled equation
 \eqref{contequ}
 in the space
 $C([0,T], H) \bigcap L^2(0,T; V)$
 with respect to controls in the weak
 topology  of $L^2(0,T;l^2)$.
 The argument is similar to
 the case of stochastic
 partial differential equations
 which are independent of distributions.
 For the reader's convenience, we   sketch the proof with
 necessary
 modifications.

 \begin{theorem} \label{h1}
    Suppose that $({\bf \Sigma 1})$-$({\bf \Sigma 3})$ hold,
    $u_0 \in H$, and
    $v, v_i \in L^2(0, T; l^2)$ for $i \in \mathbb{N}$.
    If $v_i \rightarrow v$ weakly in $L^2(0, T; l^2)$,
    then
    $\mathcal{G}^0 \big( \int_0^\cdot v_i(s) ds \big)$
    converges to
    $\mathcal{G}^0 \big( \int_0^\cdot v(s) ds \big)$
    strongly
    in  $C([0,T], H) \bigcap L^2(0,T; V)$.
 \end{theorem}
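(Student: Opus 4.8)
The plan is to exploit the fact that the measure argument $\mu(t):=\mathcal{L}_{u^0(t)}$ in \eqref{contequ} is \emph{frozen}: it is the law of the deterministic solution $u^0$ and is therefore the same for every control. Writing $u_i:=\mathcal{G}^0\big(\int_0^\cdot v_i(s)\,ds\big)=u_{v_i}$ and $\bar u:=\mathcal{G}^0\big(\int_0^\cdot v(s)\,ds\big)=u_v$, the two copies of \eqref{contequ} differ only in their state arguments and in the factor $v_i$ versus $v$, so the bounds \eqref{f-4}, \eqref{g-2} and \eqref{sigmalip} will enter with \emph{no} Wasserstein terms. Since $\{v_i\}$ converges weakly it is bounded, say $\|v_i\|_{L^2(0,T;l^2)}\le N$, so Theorem \ref{cewellposed} (estimate \eqref{dceue}) gives a bound on $\{u_i\}$, uniform in $i$, in $L^\infty(0,T;H)\cap L^2(0,T;V)\cap L^p(0,T;L^p(\mathbb{R}^n))$. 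Reading $\partial_t u_i$ off the equation and using \eqref{f-3}, \eqref{g-3} and \eqref{sigma3}, one finds $\{\partial_t u_i\}$ bounded in $L^q(0,T;(V\cap L^p(\mathbb{R}^n))^\ast)$. Along a subsequence (not relabelled) I may then assume $u_i\rightharpoonup\tilde u$ weakly in $L^2(0,T;V)$ and weakly-$\ast$ in $L^\infty(0,T;H)$, $f(\cdot,\cdot,u_i,\mu)\rightharpoonup\chi$ weakly in $L^q(0,T;L^q(\mathbb{R}^n))$, and $v_i\rightharpoonup v$ weakly in $L^2(0,T;l^2)$.

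The main obstacle, and the heart of the argument, is to upgrade this to \emph{strong} convergence of $u_i$ in $L^2(0,T;H)$, which does not follow from the Sobolev bounds alone because $H^\alpha(\mathbb{R}^n)\hookrightarrow L^2(\mathbb{R}^n)$ is not compact on the unbounded domain. I would resolve it by localisation plus tails: on each ball $B_m=\{|x|\le m\}$ the embedding $H^\alpha(\mathbb{R}^n)\hookrightarrow L^2(B_m)$ is compact, so the bound on $\{u_i\}$ in $L^2(0,T;V)$ together with the bound on $\{\partial_t u_i\}$ yields, by the Aubin--Lions--Simon lemma, $u_i\to\tilde u$ strongly in $L^2(0,T;L^2(B_m))$ for every $m$. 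Lemma \ref{tailestim} supplies the missing uniform control at infinity: for any $\delta>0$ there is $m_0$ with $\sup_i\int_{|x|\ge m}|u_i(t,x)|^2\,dx<\delta$ for all $m\ge m_0$, $t\in[0,T]$ (and the same holds for $\tilde u$). Splitting $\mathbb{R}^n=B_m\cup\{|x|\ge m\}$ then gives $u_i\to\tilde u$ strongly in $L^2(0,T;H)$, hence a.e. along a further subsequence; by continuity of $f$ and the $L^q$ bound this identifies $\chi=f(\cdot,\cdot,\tilde u,\mu)$. Passing to the limit in the weak form of \eqref{contequ}, the only delicate term is $\sigma(\cdot,u_i,\mu)v_i$: testing against fixed $\xi$ I write $(\xi,\sigma(\cdot,u_i,\mu)v_i)=(\sigma(\cdot,u_i,\mu)^\ast\xi,v_i)_{l^2}$ and combine the \emph{strong} convergence $\sigma(\cdot,u_i,\mu)^\ast\xi\to\sigma(\cdot,\tilde u,\mu)^\ast\xi$ in $L^2(0,T;l^2)$ (from \eqref{sigmalip} and the strong $L^2(0,T;H)$ convergence) with the \emph{weak} convergence of $v_i$. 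Thus $\tilde u$ solves the controlled equation with control $v$, so by the uniqueness in Theorem \ref{cewellposed}, $\tilde u=\bar u=u_v$; as the limit is subsequence-independent, the whole sequence converges strongly in $L^2(0,T;H)$.

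Finally I would promote this to convergence in $C([0,T],H)\cap L^2(0,T;V)$ by an energy estimate on $w_i:=u_i-\bar u$. Subtracting the two copies of \eqref{contequ} and pairing with $w_i$ (note $w_i(0)=0$), bounding the drift from below by \eqref{f-4}, the $g$-difference by \eqref{g-2}, and the part $[\sigma(\cdot,u_i,\mu)-\sigma(\cdot,\bar u,\mu)]v_i$ by \eqref{sigmalip}, gives
\begin{align*}
 \tfrac12\|w_i(t)\|^2 + \int_0^t\|(-\triangle)^{\frac{\alpha}{2}}w_i(s)\|^2\,ds
 \le \int_0^t c\big(1+\|v_i(s)\|_{l^2}\big)\|w_i(s)\|^2\,ds + \Gamma_i(t),
\end{align*}
where $\Gamma_i(t)=\int_0^t\big(\sigma(s,\bar u(s),\mu(s))(v_i-v)(s),\,w_i(s)\big)\,ds$. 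The crucial point is that $\Gamma_i$ is small \emph{uniformly in $t$}: by Cauchy--Schwarz, \eqref{sigma3} and $\bar u\in L^\infty(0,T;H)$,
\begin{align*}
 \sup_{t\in[0,T]}|\Gamma_i(t)|
 \le \|\sigma(\cdot,\bar u,\mu)^\ast w_i\|_{L^2(0,T;l^2)}\,\|v_i-v\|_{L^2(0,T;l^2)}
 \le 2Nc\,\|w_i\|_{L^2(0,T;H)}\longrightarrow 0 .
\end{align*}
Since $\int_0^T(1+\|v_i\|_{l^2})\,ds\le T+T^{1/2}N$ is uniformly bounded, Gronwall's inequality yields $\sup_{t\in[0,T]}\|w_i(t)\|^2\to0$, i.e. convergence in $C([0,T],H)$; feeding this back into the energy inequality gives $\int_0^T\|(-\triangle)^{\frac{\alpha}{2}}w_i\|^2\,ds\to0$, which together with $\|w_i\|_{L^2(0,T;H)}\to0$ produces convergence in $L^2(0,T;V)$, completing the argument. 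Notably, once the strong $L^2(0,T;H)$ convergence is in hand, the weak convergence of $v_i$ is not even needed in this last step, only its boundedness.
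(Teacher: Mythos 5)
Your proposal is correct and follows essentially the same route as the paper's proof: uniform a priori bounds and weak compactness, strong $L^2(0,T;H)$ convergence obtained by combining compactness of the Sobolev embedding on bounded balls with the uniform tail estimates of Lemma \ref{tailestim}, identification of the limit via a.e.\ convergence of the drift and the uniqueness statement of Theorem \ref{cewellposed}, and a final energy estimate to upgrade to $C([0,T],H)\cap L^2(0,T;V)$. The only differences are cosmetic: the paper implements the localized compactness through equicontinuity in $\left(V\cap L^p(\mathbb{R}^n)\right)^\ast$ and the Arzel\`a--Ascoli theorem rather than Aubin--Lions--Simon, and in the last step it bounds the whole control term crudely by $\left(\|v_i(t)\|_{l^2}+\|v(t)\|_{l^2}\right)\|u_{v_i}(t)-u_v(t)\|$ and integrates directly (no Gronwall needed) instead of splitting off $\sigma(\cdot,u_v,\cdot)(v_i-v)$ as you do.
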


 \begin{proof}
    Let  $u_v$ and $u_{v_i}$
    be the solutions of \eqref{contequ}-\eqref{contequ IV} corresponding to
    controls $v$ and $v_i$, respectively.
    By \eqref{G0}, we   have
    $u_v = \mathcal{G}^0 \big( \int_0^\cdot v(s) ds \big)$
    and
    $u_{v_i} = \mathcal{G}^0 \big( \int_0^\cdot v_i(s) ds \big)$.
    Therefore, we only need to  prove $u_{v_i}$ converges to $u_v$ strongly in $C([0,T], H) \bigcap L^2(0,T; V)$.

    {\bf Step 1:}   equicontinuity of $\{ u_{v_i} \}_{i=1}^\infty$
    in $\left( V \bigcap L^p(\mathbb{R}^n) \right)^\ast$.
    Since $v_i \rightarrow v$ weakly in $L^2(0, T; l^2)$,
    we know  that $\{ v_i \}_{i=1}^\infty$ is bounded in $L^2(0,T; l^2)$.
    By Theorem \ref{cewellposed}, there exists $C_1 = C_1(T) > 0$ such
    that for all $i \in \mathbb{N}$,
    \begin{align}\label{sc-1}
        \| u_{v_i} \|_{C([0,T],H)}
        + \| u_{v_i} \|_{L^2(0,T; V)}
        + \| u_{v_i} \|_{L^p(0,T;L^p(\mathbb{R}^n))}
        + \| v_i \|_{L^2(0,T; l^2)}
        \leq C_1.
    \end{align}
    By \eqref{f-4},
    \eqref{sc-1} and Theorem \ref{dde-wellposed},
    one
    find that there exists $C_2 = C_2(T) > 0$ such that
    \begin{align} \label{sc-2}
        & \| f(\cdot, \cdot, u_{v_i}, \mathcal{L}_{u^0(\cdot)} ) \|_{L^q(0,T;L^q(\mathbb{R}^n) ) }^q
        \nonumber \\
        = & \int_0^T \int_{\mathbb{R}^n}
        \left| f(t,x, u_{v_i}(t), \mathcal{L}_{u^0(t)} )
        \right|^q dx dt
        \nonumber \\
        \leq & \int_0^T \int_{\mathbb{R}^n}
        \left[ \lambda_3 |u_{v_i}(t)|^{p-1} + \psi_3(t,x) \left( 1 + \|u^0(t)\| \right)
        \right]^q dx dt
        \nonumber \\
        \leq & \int_0^T \int_{\mathbb{R}^n}
        \left\{ \lambda_3 |u_{v_i}(t)|^{p-1}
        + \psi_3(t,x)
        \left[ 1 + M_0^{\frac 12} (1 + \|u_0\|)
        \right]
        \right\}^q dx dt
        \nonumber \\
        \leq & 2^{q-1} \lambda_3^q
        \int_0^T \int_{\mathbb{R}^n} |u_{v_i}(t)|^{q(p-1)} dx dt
        + 2^{q-1}
        \left[ 1 + M_0^{\frac 12}(1 + \|u_0\|)
        \right]^q
        \int_0^T \int_{\mathbb{R}^n} |\psi_3(t,x)|^q dx dt
        \nonumber \\
        \leq & 2^{q-1} \lambda_3^q \|u_{v_i}\|_{L^p(0,T;L^p(\mathbb{R}^n) ) }^p
        + 2^{q-1}
        \left[ 1 + M_0^{\frac 12}(1 + \|u_0\|)
        \right]^q
        \int_0^T \int_{\mathbb{R}^n} |\psi_3(t,x)|^q dx dt
        \le  C_2,
    \end{align}
    where $C_2=C_2(T)>0$.
    Similar to \eqref{sfnsol-u},
    by \eqref{contequ}-\eqref{contequ IV}
    we have
    \begin{align} \label{sc-3}
        & u_{v_i} (t)
        + \int_0^t (-\triangle)^{\alpha} u_{v_i}(s) ds
        + \int_0^t f( s, \cdot, u_{v_i}(s), \mathcal{L}_{u^0(s)} ) ds
        \nonumber \\
        = & u_0
        + \int_0^t g(s, \cdot, u_{v_i}(s), \mathcal{L}_{u^0(s)} ) ds
        + \int_0^t \sigma(s, u_{v_i}(s), \mathcal{L}_{u^0(s)} ) v_i(s) ds
    \end{align}
    in $  \left( V \bigcap L^p(\mathbb{R}^n) \right)^\ast$.
    Then for all $0 \leq s \leq t \leq T$,
    \begin{align} \label{sc-4}
        & \| u_{v_i} (t) - u_{v_i} (s) \|_{\left( V \bigcap L^p(\mathbb{R}^n) \right)^\ast}
        \nonumber \\
        \leq & \int_s^t \| (-\triangle)^{\alpha} u_{\theta_i}(r) \|_{V^\ast} dr
        + \int_s^t \| f( r, \cdot, u_{v_i}(r), \mathcal{L}_{u^0(r)} ) \|_{L^q(\mathbb{R}^n)} dr
        \nonumber \\
        & + \int_s^t \| g(r, \cdot, u_{v_i}(r), \mathcal{L}_{u^0(r)} ) \| dr
        + \int_s^t \| \sigma(r, u_{v_i}(r), \mathcal{L}_{u^0(r)} ) v_i(r) \| dr.
    \end{align}

    For the first term on the right-hand side of \eqref{sc-4},
    by
    \eqref{sc-1}
    and the H\"{o}lder inequality,
    we obtain
    \begin{align} \label{sc-5}
        \int_s^t \| (-\triangle)^{\alpha} u_{v_i}(r) \|_{V^\ast} dr
        \leq \int_s^t \| u_{v_i}(r) \|_{V} dr
        \leq C_1 (t-s)^{\frac{1}{2}}.
    \end{align}

    For the second term on the right-hand side of \eqref{sc-4},
    by the H\"{o}lder inequality and \eqref{sc-2}, we have
    \begin{align} \label{sc-6}
        \int_s^t \| f( r, \cdot, u_{v_i}(r), \mathcal{L}_{u^0(r)} ) \|_{L^q(\mathbb{R}^n)} dr
        \leq (t-s)^{\frac{1}{p}} \| f( \cdot, \cdot, u_{v_i}, \mathcal{L}_{u^0(\cdot)} ) \|_{ L^q( 0, T; L^q(\mathbb{R}^n) ) }
        \leq C_2^{\frac{1}{q}} (t-s)^{\frac{1}{p}}.
    \end{align}

    For the third term on the right-hand side of \eqref{sc-4},
    by \eqref{g-3}  and Theorem \ref{dde-wellposed}, we obtain
    \begin{align} \label{sc-7}
        & \int_s^t \| g(r, \cdot, u_{v_i}(r), \mathcal{L}_{u^0(r)} ) \| dr
        \nonumber \\
        \leq & (t-s)^{\frac{1}{2}} \| g(\cdot, \cdot, u_{v_i}, \mathcal{L}_{u^0(\cdot)} ) \|_{ L^2(0, T; H) }
        \nonumber \\
        \leq & (t-s)^{\frac{1}{2}}
        \left\{
        \int_0^T
        \left[ 4 \| \psi_g(r) \|_{L^\infty(\mathbb{R}^n) }^2
        \|  u_{v_i} (r) \|^2 +
        4 M_0( 1 + \|u_0\|^2 )
        \| \psi_g(r)\|^2
        + 2 \| \psi_g(r) \|^2
        \right]
        dr
        \right\}^{\frac{1}{2}}
        \nonumber \\
        \leq & (t-s)^{\frac{1}{2}}
        \left\{ 2 \sqrt{T} C_1 \| \psi_g \|_{L^\infty(0,T; L^\infty(\mathbb{R}^n) ) }
        + (2 \sqrt{M_0( 1 + \|u_0\|^2 )}
        + \sqrt{2}) \| \psi_g \|_{L^2(0,T; l^2(\mathbb{R}^n) ) }
        \right\}.
    \end{align}

    For the fourth term on the right-hand side of \eqref{sc-4},
    by \eqref{sigma3}, \eqref{sc-1} and Theorem \ref{dde-wellposed}, we obtain
    \begin{align} \label{sc-8}
        & \int_s^t \| \sigma(r, u_{v_i}(r), \mathcal{L}_{u^0(r)} ) v_i(r) \| dr  \nonumber \\
        \leq & \int_s^t \| \sigma(r, u_{v_i}(r), \mathcal{L}_{u^0(r)} ) \|_{L_2(l^2, H)}  \| v_i(r) \|_{l^2} dr  \nonumber \\
        \leq & \left( \int_s^t \| \sigma(r, u_{v_i}(r), \mathcal{L}_{u^0(r)} ) \|_{L_2(l^2, H)}^2 dr \right)^{\frac{1}{2}}
        \| v_i \|_{L^2(0,T; l^2)}  \nonumber \\
        \leq & \left[ \int_s^t M_{\sigma, T}
        \left( 1 + \| u_{v_i}(r)\|^2
        + \mathcal{L}_{u^0(r)}(\|\cdot\|^2)
        \right)
        dr
        \right]^{\frac{1}{2}}
        \| v_i \|_{L^2(0,T; l^2)}  \nonumber \\
        \leq & \left\{ M_{\sigma, T} \left[ 1 + C_1^2 + M_0 \left( 1 + \|u_0\|^2 \right) \right] \right\}^{\frac{1}{2}}
        C_1 |t-s|^{\frac{1}{2}}.
    \end{align}
    Then from \eqref{sc-4}-\eqref{sc-8},
    it follows that
    \begin{align} \label{sc-9}
        \| u_{v_i} (t) - u_{v_i} (s) \|_{\left( V \bigcap L^p(\mathbb{R}^n) \right)^\ast}
        \leq C_3 (t-s)^{\frac{1}{p}},
    \end{align}
    where $C_3 = C_3(T) > 0$.

    From \eqref{sc-1} and \eqref{sc-2},
    we   know that there exist
    $$
    z \in L^\infty(0,T; H) \bigcap L^2(0,T; V) \bigcap L^p(0,T; L^p(\mathbb{R}^n) ),\
    \varphi \in L^q(0,T; L^q(\mathbb{R}^n) ),
    $$
    and a  subsequence of $\{ u_{v_i} \}_{i=1}^\infty$ (not relabeled)
    such that
    \begin{align}
        & u_{v_i} \rightarrow z \  \text{weak-star in} \  L^\infty(0,T; H),  \label{uws} \\
        & u_{v_i} \rightarrow z \  \text{weakly in} \  L^2(0,T; V),  \label{uw} \\
        & u_{v_i} \rightarrow z \  \text{weakly in} \  L^p(0,T; L^p(\mathbb{R}^n) ),  \label{uwlp} \\
        & f(\cdot, \cdot, u_{v_i}, \mathcal{L}_{u^0(\cdot)} ) \rightarrow \varphi \  \text{weakly in} \  L^q(0,T; L^q(\mathbb{R}^n) ).  \label{fw}
    \end{align}

    Next, we prove the strong convergence
    of $\{u_{v_i}\}_{i=1}^\infty$ in
    $ L^2(0,T; H)$.

    {\bf Step 2:}  proving
    \begin{align} \label{sc-10}
        u_{v_i} \rightarrow  z  \ \ \  \text{strongly in} \ \  L^2(0,T; H).
    \end{align}
    We  firstly prove
    $u_{v_i} \rightarrow z \
    \text{strongly in} \  C \left([0,T], \left( V \bigcap L^p(\mathbb{R}^n) \right)^\ast \right).$
    By Step 1, we know
    $\{ u_{v_i} \}_{i=1}^\infty$
    is equicontinuous in
    $\left( V \bigcap L^p(\mathbb{R}^n) \right)^\ast$
    on $[0,T]$.
    So by the Arzel\`{a}-Ascoli Theorem,
    we only need to  show that
    for each $t \in [0,T]$,
    $\{ u_{v_i}(t) \}_{i=1}^\infty$
    is precompact in
    $\left( V \bigcap L^p(\mathbb{R}^n) \right)^\ast$.

    Let $\zeta_m$  be the cut-off function
    as introduced in the proof of Lemma
    \ref{tailestim}.
    Given $t \in [0,T]$, let
    $\widetilde{u}_{v_i}^m (t,x)
    = \zeta_m(x) u_{v_i}(t,x)$
    and
    $\widehat{u}_{v_i}^m(t,x)
    = (1 - \zeta_m(x) ) u_{v_i}(t,x)$
    for $x\in \mathbb{R}^n$.
    Then
    $u_{v_i}(t,x)
    = \widetilde{u}_{v_i}^m(t,x)
    + \widehat{u}_{v_i}^m(t,x).$
    for every $m\in \mathbb{N}$.
    Moreover, by \eqref{sc-1}, we have
    \begin{align} \label{sc-10a}
        \| \widehat{u}_{v_i}^m(t) \|
        \leq \| u_{v_i} \|_{C([0,T], H)}
        \leq C_1, \ \ \  \forall\ i,m \in \mathbb{N}, \ t\in [0,T].
    \end{align}
    Notice that
    $\widehat{u}_{v_i}^m(t,x) = 0$  for $|x| \geq m$ and   $t\in [0,T]$.
    By  \eqref{sc-10a} and the
    compactness of the embedding
    $H^\alpha (|x|<m)
    \hookrightarrow L^2(|x|<m)$,
    we infer that
    $\{ \widehat{u}_{v_i}^m(t) \}_{i=1}^\infty$
    is precompact in
    $\left( V \bigcap L^p(\mathbb{R}^n) \right)^\ast$.

    On the other hand, by Lemma \ref{tailestim} and \eqref{sc-1},
    we find that for every $\delta>0$,
    there exists
    $m_0 = m_0 (u_0, T,  \delta ) > 0$
    such that for all $m \geq m_0$, $i \in \mathbb{N}$ and $t \in [0,T]$,
    \begin{align*}
        \| \widetilde{u}_{v_i}^m(t)
        \|_{\left( V \bigcap L^p(\mathbb{R}^n) \right)^\ast}^2
        \leq
        \| \widetilde{u}_{v_i}^m(t)
        \|_{H}^2
        \leq \int_{|x| \geq {\frac 12}  m}
        |u_{v_i}(t,x)|^2
        dx
        < \frac{1}{4}  \delta^2.
    \end{align*}
    Then we infer that
    $\{ u_{v_i}(t) \}_{i=1}^\infty
    =
    \{ \widetilde{u}_{v_i}^m(t)
    + \widehat{u}_{v_i}^m(t)
    \}_{i=1}^\infty$
    has a finite open cover with radius $\delta$ in
    $\left( V \bigcap L^p(\mathbb{R}^n) \right)^\ast$,
    and hence
    $
    \{ u_{v_i}(t) \}_{i=1}^\infty
    $
    is precompact in
    $\left( V \bigcap L^p(\mathbb{R}^n) \right)^\ast$
    for  $t \in [0,T]$,
    which along with \eqref{uws}
    implies that
    \begin{align} \label{sc-11}
        u_{v_i} \rightarrow  z  \ \  \text{strongly in} \  C \left( [0,T], \left( V \bigcap L^p(\mathbb{R}^n) \right)^\ast \right).
    \end{align}

    Since
    \begin{align*}
        & \int_0^T \| u_{v_i}(t) - z(t) \|^2 dt  \nonumber \\
        = & \int_0^T
        \left\langle  u_{v_i}(t) - z(t),
        u_{v_i}(t) - z(t)
        \right\rangle_{\left( V \bigcap L^p(\mathbb{R}^n), \left( V \bigcap L^p(\mathbb{R}^n) \right)^\ast \right)} dt
        \nonumber \\
        \leq & \left( \int_0^T
        \| u_{v_i}(t) - z(t) \|_{V \bigcap L^p(\mathbb{R}^n)}^2 dt
        \right)^{\frac{1}{2}}
        \left( \int_0^T
        \| u_{v_i}(t) - z(t) \|_{\left( V \bigcap L^p(\mathbb{R}^n) \right)^\ast}^2 dt
        \right)^{\frac{1}{2}},
    \end{align*}
    it follows from \eqref{sc-1} and \eqref{sc-11} that
    \begin{align*}
        u_{v_i}  \rightarrow  z  \ \  \text{strongly in} \ \  L^2(0,T; H).
    \end{align*}

    {\bf Step 3:}   proving  $u_v = z$.

    By \eqref{contequ}, we obtain that for any
    $t\in [0,T]$ and
    $\xi \in  V \bigcap L^p(\mathbb{R}^n)$,
    \begin{align} \label{sc-12}
        & \left( u_{v_i} (t), \xi \right)
        + \int_0^t
        \left( (-\triangle)^{\frac{\alpha}{2}} u_{v_i}(s),
        (-\triangle)^{\frac{\alpha}{2}} \xi
        \right) ds
        \nonumber \\
        & + \int_0^t \int_{\mathbb{R}^n} f( s, x, u_{v_i}(s), \mathcal{L}_{u^0(s)} ) \xi(x) dx ds
        \nonumber\\
        = & \left(u_0, \xi\right)
        + \int_0^t \left( g(s, u_{v_i}(s), \mathcal{L}_{u^0(s)} ), \xi \right) ds
        + \int_0^t \left( \sigma(s, u_{v_i}(s), \mathcal{L}_{u^0(s)} ) v_i(s), \xi \right) ds.
    \end{align}

    It follows from \eqref{sc-10} in Step 2 that (up to a subsequence)
    $$u_{v_i}  \rightarrow  z \ \ \   \text{a.e. on }  \ (0,T) \times \mathbb{R}^n.$$
    Then by the continuity of $f$, we get
    \begin{align} \label{sc-13}
        f(t, x, u_{v_i}(t), \mathcal{L}_{u^0(t)} )  \rightarrow  f(t, x, z, \mathcal{L}_{u^0(t)} )\ \ \
        \text{a.e. on } \  (0,T) \times \mathbb{R}^n.
    \end{align}
    By \eqref{fw}, \eqref{sc-13} and Mazur's theorem, we have
    \begin{align} \label{sc-14}
        \varphi = f(t, x, z, \mathcal{L}_{u^0(t)})  \ \ \  \text{a.e. on } \  (0,T) \times \mathbb{R}^n,
    \end{align}
    and hence
    \begin{align} \label{sc-15}
        f(\cdot, \cdot, u_{v_i}, \mathcal{L}_{u^0(\cdot)} ) \rightarrow f(\cdot, \cdot, z, \mathcal{L}_{u^0(\cdot)} )
        \ \  \text{weakly in} \  L^q(0,T; L^q(\mathbb{R}^n) ).
    \end{align}
    Taking the limit
    of the third term on the left-hand side of \eqref{sc-11},
    we obtain
    \begin{align} \label{sc-16}
        \lim_{i \to   \infty} \int_0^t \int_{\mathbb{R}^n} f( s, x, u_{v_i}(s), \mathcal{L}_{u^0(s)}) \xi(x) dx ds
        = \int_0^t \int_{\mathbb{R}^n} f( s, x, z(s), \mathcal{L}_{u^0(s)} ) \xi(x) dx ds
    \end{align}
    For the second term on the right-hand side of \eqref{sc-12},
    by \eqref{g-2} and \eqref{sc-10}, we have
    \begin{align} \label{sc-17}
        &
        \left |
        \int_0^t \left( g(s, u_{v_i}(s), \mathcal{L}_{u^0(s)} ) - g(s, z(s), \mathcal{L}_{u^0(s)} ), \xi \right) ds
        \right |
        \nonumber \\
        \leq & \int_0^t
        \| g(s, u_{v_i}(s), \mathcal{L}_{u^0(s)} ) - g(s, z(s), \mathcal{L}_{u^0(s)} ) \|
        \| \xi \|
        ds  \nonumber \\
        \leq & \| \psi_g \|_{L^\infty(0,T; L^\infty(\mathbb{R}^n) ) }
        \| \xi \|
        t^{\frac{1}{2}}
        \left( \int_0^t
        \| u_{v_i}(s) - z(s) \|
        ds
        \right)^{\frac{1}{2}}
        \rightarrow   \ 0, \ \ \  \text{as} \  i \to  \infty.
    \end{align}
    For the third term on the right-hand side of \eqref{sc-12}, we have
    \begin{align} \label{sc-18}
        & \int_0^t \left( \sigma(s, u_{v_i}(s), \mathcal{L}_{u^0(s)} ) v_i(s), \xi \right) ds \nonumber \\
        = & \int_0^t
        \left(
        \left( \sigma(s, u_{v_i}(s), \mathcal{L}_{u^0(s)} ) - \sigma(s, z(s), \mathcal{L}_{u^0(s)} )
        \right) v_i(s),
        \xi
        \right) ds
        \nonumber \\
        & + \int_0^t
        \left( \sigma(s, z(s), \mathcal{L}_{u^0(s)} ) v_i(s), \xi \right) ds.
    \end{align}
    For the first term on the right-hand side of \eqref{sc-18},
    by \eqref{sigmalip}, \eqref{sc-1} and \eqref{sc-10},  we have
    \begin{align} \label{sc-19}
        & \| \int_0^t
        \left(
        \left( \sigma(s, u_{v_i}(s), \mathcal{L}_{u^0(s)} ) - \sigma(s, z(s), \mathcal{L}_{u^0(s)} )
        \right) v_i(s),
        \xi
        \right) ds
        \|
        \nonumber \\
        \leq & \int_0^t
        \| \sigma(s, u_{v_i}(s), \mathcal{L}_{u^0(s)} ) - \sigma(s, z(s), \mathcal{L}_{u^0(s)} ) \|_{L_2(l^2, H)}
        \| v_i(s) \|_{l^2}
        \|\xi\|
        ds
        \nonumber \\
        \leq & L_\sigma^{\frac{1}{2}}  \|\xi\|
        \int_0^t
        \| u_{v_i}(s) - z(s) \|
        \| v_i(s) \|_{l^2}
        ds
        \nonumber \\
        \leq & L_\sigma^{\frac{1}{2}}  \|\xi\|
        \left( \int_0^T
        \| u_{v_i}(s) - z(s) \|^2
        ds
        \right)^{\frac{1}{2}}
        \left( \int_0^T \| v_i(s) \|_{l^2}^2 ds
        \right)^{\frac{1}{2}}
        \nonumber \\
        \rightarrow & \  0, \ \ \  \text{as} \  i \to +\infty.
    \end{align}
    For the second term on the right-hand side of \eqref{sc-18},
    we define an operator $\mathcal{G}: L^2(0,T; l^2)  \rightarrow  H$ by
    $$\mathcal{G}(\tilde{v})
    = \int_0^t  \sigma(s, z(s), \mathcal{L}_{u^0(s)} ) \tilde{v}(s) ds, \ \ \
    \forall \  \tilde{v} \in L^2(0,T; l^2).
    $$
    By \eqref{sigma3}, \eqref{uws} and Theorem \ref{dde-wellposed},
    we get for any $t \in [0,T]$,
    \begin{align} \label{sc-20}
        & \| \int_0^t  \sigma(s, z(s), \mathcal{L}_{u^0(s)} ) \tilde{v}(s) ds \|  \nonumber \\
        \leq & \int_0^t \| \sigma(s, z(s), \mathcal{L}_{u^0(s)} ) \|_{L_2(l^2, H)}
        \| \tilde{v}(s) \|_{l^2}
        ds  \nonumber \\
        \leq &
        \left( \int_0^t \| \sigma(s, z(s), \mathcal{L}_{u^0(s)} ) \|_{L_2(l^2, H)}^2 ds
        \right)^{\frac{1}{2}}
        \left( \int_0^t \| \tilde{v}(s) \|_{l^2}^2 ds
        \right)^{\frac{1}{2}} \nonumber \\
        \leq &
        \left[ \int_0^t M_{\sigma,T}
        \left( 1 + \|z(s)\|^2
        + \mathcal{L}_{u^0(s)}(\|\cdot\|^2)
        \right) ds
        \right]^{\frac{1}{2}}
        \| \tilde{v} \|_{L^2(0,T; l^2)} \nonumber \\
        \leq &
        \left\{ TM_{\sigma,T}
        \left[ 1 + \|z\|_{L^\infty(0,T;H)}^2
        + M_0 \left( 1 + \|u_0\|^2 \right)
        \right]
        \right\}^{\frac{1}{2}}
        \| \tilde{v} \|_{L^2(0,T; l^2)},
    \end{align}
    which shows that
    $\mathcal{G}$ is a bounded linear operator, and thus
    is weakly continuous.
    Since $v_i \rightarrow v$ weakly in $L^2(0, T; l^2)$,
    it follows that
    $\mathcal{G}(v_i)$ converges to
    $\mathcal{G}(v)$
    weakly
    in $H$.
    So we have
    \begin{align} \label{sc-21}
        \lim_{i \to  \infty}
        \int_0^t
        \left( \sigma(s, z(s), \mathcal{L}_{u^0(s)} ) v_i(s), \xi \right) ds
        = \int_0^t
        \left( \sigma(s, z(s), \mathcal{L}_{u^0(s)} ) v(s), \xi \right) ds.
    \end{align}
    By \eqref{sc-18}, \eqref{sc-19} and \eqref{sc-21},
    we obtain
    \begin{align} \label{sc-22}
        \lim_{i \to \infty}
        \int_0^t \left(\sigma(s, u_{v_i}(s), \mathcal{L}_{u^0(s)} ) v_i(s), \xi \right) ds
        = \int_0^t
        \left( \sigma(s, z(s), \mathcal{L}_{u^0(s)} ) v(s), \xi \right) ds.
    \end{align}

    Taking the limit  of \eqref{sc-12}, by \eqref{sc-16}, \eqref{sc-17} and \eqref{sc-22},
    we  obtain
    \begin{align*}
        & \left( z (t), \xi \right)
        + \int_0^t
        \left( (-\triangle)^{\frac{\alpha}{2}} z(s),
        (-\triangle)^{\frac{\alpha}{2}} \xi
        \right) ds
        \nonumber \\
        & + \int_0^t \int_{\mathbb{R}^n} f( s, x, z(s), \mathcal{L}_{u^0(s)} ) \xi(x) dx ds
        \nonumber\\
        = & \left(u_0, \xi\right)
        + \int_0^t \left( g(s, z(s), \mathcal{L}_{u^0(s)} ), \xi \right) ds
        + \int_0^t \left( \sigma(s, z(s), \mathcal{L}_{u^0(s)} ) v(s), \xi \right) ds,
    \end{align*}
    which shows that  $z$ is a solution to \eqref{contequ}-\eqref{contequ IV}.
    Then by the uniqueness of solutions to \eqref{contequ}-\eqref{contequ IV} in Lemma \ref{cewellposed},
    we obtain $u_v = z$.

    {\bf Step 4:}   proving
    $ u_{v_i}  \rightarrow  u_v $ strongly in  $C([0,T], H) \bigcap L^2(0,T; V)$.

    By \eqref{contequ}, we obtain
    \begin{align}\label{sc-23}
        & \frac{d}{dt} \| u_{v_i}(t) - u_v(t) \|^2
        + 2 \| (-\triangle)^{\frac{\alpha}{2}} \left( u_{v_i}(t) - u_v(t) \right) \|^2
        \nonumber \\
        & + 2 \int_{\mathbb{R}^n}
        \left( f(t, x, u_{v_i}(t), \mathcal{L}_{u^0(t)} )
        - f(t, x, u_v(t), \mathcal{L}_{u^0(t)})
        \right)
        \left( u_{v_i}(t) - u_v(t)
        \right)
        dx
        \nonumber \\
        = & 2
        \left( g(t, \cdot, u_{v_i}(t), \mathcal{L}_{u^0(t)} )
        - g(t, \cdot, u_v(t), \mathcal{L}_{u^0(t)} ), \
        u_{v_i}(t) - u_v(t)
        \right)  \nonumber \\
        & + 2
        \left( \sigma(t, u_{v_i}(t), \mathcal{L}_{u^0(t)} ) v_i(t)
        - \sigma(t, u_v(t), \mathcal{L}_{u^0(t)} ) v(t), \
        u_{v_i}(t) - u_v(t)
        \right)
    \end{align}
    with $u_{v_i}(0) - u_v(0) = 0$.
    For the third term on the left-hand side of \eqref{sc-23},
    by \eqref{f-4}, we have
    \begin{align} \label{sc-24}
        & 2 \int_{\mathbb{R}^n}
        \left( f(t, x, u_{v_i}(t), \mathcal{L}_{u^0(t)} )
        - f(t, x, u_v(t), \mathcal{L}_{u^0(t)})
        \right)
        \left( u_{v_i}(t) - u_v(t)
        \right)
        dx  \nonumber \\
        \geq & - 2 \| \psi_4(t) \|_{L^\infty(\mathbb{R}^n)}  \| u_{v_i}(t) - u_v(t) \|^2.
    \end{align}
    For the first term on the right-hand side of \eqref{sc-23},
    by \eqref{g-2}, we obtain
    \begin{align} \label{sc-25}
        & 2
        \left( g(t, \cdot, u_{v_i}(t), \mathcal{L}_{u^0(t)} )
        - g(t, \cdot, u_v(t), \mathcal{L}_{u^0(t)} ), \
        u_{v_i}(t) - u_v(t)
        \right)  \nonumber \\
        \leq & 2 \| \psi_g(t) \|_{L^\infty(\mathbb{R}^n)}
        \| u_{v_i}(t) - u_v(t) \|^2.
    \end{align}
    For the second term on the right-hand side of \eqref{sc-23},
    by \eqref{sigma3} and \eqref{sc-1}, we obtain
    \begin{align} \label{sc-26}
        & 2
        \left( \sigma(t, u_{v_i}(t), \mathcal{L}_{u^0(t)} ) v_i(t)
        - \sigma(t, u_v(t), \mathcal{L}_{u^0(t)} ) v(t), \
        u_{v_i}(t) - u_v(t)
        \right) \nonumber \\
        \leq & 2
        \left( \| \sigma(t, u_{v_i}(t), \mathcal{L}_{u^0(t)} ) \|_{L_2(l^2, H)}
        \| v_i(t) \|_{l^2}
        + \| \sigma(t, u_v(t), \mathcal{L}_{u^0(t)} ) \|_{L_2(l^2, H)}
        \| v(t) \|_{l^2}
        \right)
        \| u_{v_i}(t) - u_v(t) \|  \nonumber \\
        \leq & 2 \hat{C}_1
        \left( \| v_i(t) \|_{l^2}  +  \| v(t) \|_{l^2}
        \right)
        \| u_{v_i}(t) - u_v(t) \|,
    \end{align}
    where $\hat{C}_1 = \hat{C}_1(T) >0$.

    By \eqref{sc-23}-\eqref{sc-26},
    we obtain
    \begin{align}\label{sc-27}
        & \frac{d}{dt} \| u_{v_i}(t) - u_v(t) \|^2
        + 2 \| (-\triangle)^{\frac{\alpha}{2}} \left( u_{v_i}(t) - u_v(t) \right) \|^2
        \nonumber \\
        \leq & 2
        \left( \| \psi_4(t) \|_{L^\infty(\mathbb{R}^n)}
        + \| \psi_g(t) \|_{L^\infty(\mathbb{R}^n)}
        \right)
        \| u_{v_i}(t) - u_v(t) \|^2
        \nonumber \\
        & + 2 \hat{C}_1
        \left(
        \| v_i(t) \|_{l^2}
        + \| v(t) \|_{l^2}
        \right)
        \| u_{v_i}(t) - u_v(t) \|.
    \end{align}
    Integrating \eqref{sc-27}
    on $(0,t)$,
    by H\"{o}lder's inequality, \eqref{sc-1}, Step 2 and Step 3,
    we  get for all $t\in [0, T]$,
    \begin{align} \label{sc-28}
        & \| u_{v_i}(t) - u_v(t) \|^2
        + 2 \int_0^t
        \| (-\triangle)^{\frac{\alpha}{2}} \left( u_{v_i}(s) - u_v(s) \right) \|^2 ds
        \nonumber \\
        \leq & 2
        \left(   \| \psi_4(t) \|_{L^\infty(0,T; L^\infty(\mathbb{R}^n))}
        + \| \psi_g(t) \|_{L^\infty(0,T; L^\infty(\mathbb{R}^n))}
        \right)
        \| u_{v_i} - u_v \|_{L^2(0,T; H)}^2
        \nonumber \\
        & + 2 \sqrt{2} \hat{C}_1
        \left(
        \| v_i \|_{L^2(0,T; l^2}
        + \| v \|_{L^2(0,T; l^2}
        \right)
        \| u_{v_i} - u_v \|_{L^2(0,T; H)}  \nonumber \\
        \rightarrow & \ 0, \ \ \  i \to  \infty,
    \end{align}
    which concludes the proof.
 \end{proof}

 \subsection{Convergence of solutions of stochastic equations }

 In this subsection, we prove
 $\mathcal{G}^\varepsilon$
 and  $\mathcal{G}^0$
 satisfy
 $({\bf H2})$
 in Proposition \ref{ldpth}; more precisely, we will show  the
 convergence of  solutions
 of the stochastic equation
 \eqref{sce}
 in
 $C([0,T], H) \bigcap L^2(0,T; V)$,
 for which we need the following uniform
 estimates.

 \begin{lemma}\label{pre-h2}
    Suppose  $({\bf \Sigma 1})$-$({\bf \Sigma 3})$ hold.
    Then for every $R>0$, $T>0$ and $N>0$,
    there exists $M_3 = M_3 (R,T,N) > 0$ such that for any $u_0 \in H$ with $\| u_0 \| \leq R$
    and any $v \in \mathcal{A}_N$,
    the solution $u_v^\varepsilon$ of \eqref{sce}
    satisfies for all $\varepsilon \in (0,1)$,
    \begin{align}\label{bound}
        & \mathbb{E} \left[ \| u_v^\varepsilon \|^2_{C([0,T],H)} \right]
        + \mathbb{E} \left[ \| u_v^\varepsilon \|^2_{L^2(0,T; V)} \right]
        + \mathbb{E} \left[ \| u_v^\varepsilon \|^2_{L^p(0,T; L^p(\mathbb{R}^n) )} \right]
        \leq  M_3.
    \end{align}
 \end{lemma}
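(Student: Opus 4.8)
The plan is to apply Itô's formula to $\|u_v^\varepsilon(t)\|^2$, take expectations, and then use the structural assumptions $({\bf \Sigma 1})$–$({\bf \Sigma 3})$ together with a Gronwall argument to close the estimate uniformly in $\varepsilon\in(0,1)$ and $v\in\mathcal{A}_N$. This closely parallels the proof of the energy estimate in Theorem~\ref{mvsde-wellposed}, Step~2, but with two new features: the presence of the control drift term $\sigma(t,u_v^\varepsilon(t),\mathcal{L}_{u^\varepsilon(t)})v(t)$, and the fact that the measure argument is the law $\mathcal{L}_{u^\varepsilon(t)}$ of the \emph{uncontrolled} stochastic solution rather than the law of $u_v^\varepsilon$ itself. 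The latter point is actually a simplification: by Theorem~\ref{mvsde-wellposed} we already have the uniform bound $\sup_{t\in[0,T]}\mathcal{L}_{u^\varepsilon(t)}(\|\cdot\|^2)=\sup_{t\in[0,T]}\mathbb{E}[\|u^\varepsilon(t)\|^2]\le M_1(1+R^2)$ independent of $\varepsilon$, so every occurrence of $\mathcal{L}_{u^\varepsilon(s)}(\|\cdot\|^2)$ may be treated as a known bounded forcing term.

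I would first write the Itô energy identity as in \eqref{wang0321}, but now with the extra drift term producing $2\int_0^t(\sigma(s,u_v^\varepsilon(s),\mathcal{L}_{u^\varepsilon(s)})v(s),u_v^\varepsilon(s))\,ds$ on the right-hand side. I bound the reaction term from below by \eqref{f-1}, discarding the favorable $2\lambda_1\|u_v^\varepsilon\|_{L^p}^p$ term to the left-hand side and controlling the remainder by $\|\psi_1\|$ norms and the known bound on $\mathcal{L}_{u^\varepsilon(s)}(\|\cdot\|^2)$. I bound the $g$-term using \eqref{g-3} and Young's inequality. The new control term I estimate by Cauchy–Schwarz followed by \eqref{sigma3}:
\begin{align*}
  2\left(\sigma(s,u_v^\varepsilon(s),\mathcal{L}_{u^\varepsilon(s)})v(s),u_v^\varepsilon(s)\right)
  \le \|\sigma(s,u_v^\varepsilon(s),\mathcal{L}_{u^\varepsilon(s)})\|_{L_2(l^2,H)}^2\|v(s)\|_{l^2}^2
      + \|u_v^\varepsilon(s)\|^2,
\end{align*}
and then $\|\sigma\|_{L_2(l^2,H)}^2\le M_{\sigma,T}(1+\|u_v^\varepsilon(s)\|^2+\mathcal{L}_{u^\varepsilon(s)}(\|\cdot\|^2))$. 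This produces a coefficient of the form $(M_{\sigma,T}\|v(s)\|_{l^2}^2+1)\|u_v^\varepsilon(s)\|^2$ in front of the solution norm.

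The main obstacle, and the step requiring care, is handling this time-dependent, unbounded coefficient $\|v(s)\|_{l^2}^2$ in the Gronwall step. Because $v\in\mathcal{A}_N$ means $\int_0^T\|v(s)\|_{l^2}^2\,ds\le N$ almost surely, the coefficient is integrable (with a uniform $L^1(0,T)$ bound $N$) but not bounded, so I apply the integral form of Gronwall's inequality with integrable kernel: the accumulated exponential factor is $\exp\left(\int_0^T(c+M_{\sigma,T}\|v(s)\|_{l^2}^2)\,ds\right)\le e^{cT+M_{\sigma,T}N}$, which is finite and independent of $\varepsilon$ and of the particular $v$. For the $C([0,T],H)$ norm and the stochastic integral I apply the Burkholder–Davis–Gundy inequality exactly as in \eqref{sue-6}, absorbing $\tfrac12\mathbb{E}[\sup_{s\le t}\|u_v^\varepsilon(s)\|^2]$ into the left-hand side; the $\sqrt{\varepsilon}$ prefactor only helps since $\varepsilon\in(0,1)$. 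Taking expectations throughout and noting that all forcing terms involve $\psi_1,\psi_g\in L^1_{loc}\cap L^\infty_{loc}$, the sequences $\beta,\gamma,L_{\sigma,\cdot}\in\ell^2$, and the $\varepsilon$-uniform bound on $\mathcal{L}_{u^\varepsilon(s)}(\|\cdot\|^2)$, yields a final constant $M_3=M_3(R,T,N)$ independent of $\varepsilon$, which establishes \eqref{bound}.
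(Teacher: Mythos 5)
Your proposal is correct and follows essentially the same route as the paper's proof: Itô's formula for $\|u_v^\varepsilon(t)\|^2$, treating $\mathcal{L}_{u^\varepsilon(s)}(\|\cdot\|^2)$ as a bounded forcing term via Theorem \ref{mvsde-wellposed}, Cauchy--Schwarz plus \eqref{sigma3} for the control term, a pathwise Gronwall step exploiting the almost sure bound $\int_0^T\|v(s)\|_{l^2}^2\,ds\le N$ for $v\in\mathcal{A}_N$, and then Burkholder--Davis--Gundy with absorption for the supremum and a final Gronwall in expectation. You correctly identify the two genuinely delicate points (the frozen law of the uncontrolled solution and the random, merely integrable Gronwall kernel), and your handling of each matches the paper's.
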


 \begin{proof}
    It follows from \eqref{sce} and It\^{o}'s formula that for all $t\in[0,T]$, $P$-almost surely,
    \begin{align}\label{w-1}
        & \|u_v^\varepsilon(t)\|^2
        + 2 \int_0^t \|(-\triangle)^{\frac{\alpha}{2}} u_v^\varepsilon (s) \|^2 ds
        + 2 \int_{0}^{t} \int_{\mathbb{R}^n}
        f(s, x, u_v^\varepsilon (s), \mathcal{L}_{u^\varepsilon(s)} )  u_v^\varepsilon (s)
        dx ds
        \nonumber \\
        = & \| u_0 \|^2
        + 2 \int_{0}^{t}
        \left( g(s, \cdot, u_v^\varepsilon (s), \mathcal{L}_{u^\varepsilon(s)} ), u_v^\varepsilon (s) \right) ds
        + 2 \int_{0}^{t}
        \left( \sigma(s, u_v^\varepsilon (s), \mathcal{L}_{u^\varepsilon(s)} ) v(s),
        u_v^\varepsilon (s)
        \right) ds
        \nonumber \\
        & + \varepsilon \int_{0}^{t}
        \left\| \sigma(s, u_v^\varepsilon (s), \mathcal{L}_{u^\varepsilon(s)} )
        \right\|^2_{L_2(l^2, H)} ds
        + 2 \sqrt{\varepsilon} \int_{0}^{t}
        \left( u_v^\varepsilon (s), \sigma(s, u_v^\varepsilon (s), \mathcal{L}_{u^\varepsilon(s)} ) dW(s) \right).
    \end{align}
    For the third term on the left-hand side of \eqref{w-1},
    by \eqref{f-1} and Theorem \ref{mvsde-wellposed}, we have
    \begin{align}\label{w-2}
        & 2 \int_{0}^{t} \int_{\mathbb{R}^n}
        f(s, x, u_v^\varepsilon (s), \mathcal{L}_{u^\varepsilon(s)} )  u_v^\varepsilon (s)
        dx ds  \nonumber \\
        \geq &
        2 \lambda_1 \int_{0}^{t} \| u_v^\varepsilon(s) \|_{L^p(\mathbb{R}^n)}^{p} ds
        - 2 \int_{0}^{t} \| \psi_1(s) \|_{L^1(\mathbb{R}^n)} \left( 1 + \mathbb{E}\left[\|u^\varepsilon(s)\|^2\right] \right) ds
        ds\nonumber \\
        & - 2 \int_{0}^{t} \| \psi_1(s) \|_{L^\infty
            (\mathbb{R}^n)}    \| u_v^\varepsilon(s) \|^2  ds
        \nonumber \\
        \geq & 2 \lambda_1 \int_{0}^{t} \| u_v^\varepsilon(s) \|_{L^p(\mathbb{R}^n)}^{p} ds
        - 2 \left[ 1 + M_1( 1 + \|u_0\|^2 ) \right] \int_{0}^{t} \| \psi_1(s) \|_{L^1(\mathbb{R}^n)} ds
        ds\nonumber \\
        & - 2 \int_{0}^{t} \| \psi_1(s) \|_{L^\infty
            (\mathbb{R}^n)}    \| u_v^\varepsilon(s) \|^2  ds.
    \end{align}
    For the second term on the right-hand side of \eqref{w-1},
    by \eqref{g-3} and Theorem \ref{mvsde-wellposed}, we have
    \begin{align}\label{w-3}
        & 2 \int_{0}^{t}
        \left( g(s, \cdot, u_v^\varepsilon(s), \mathcal{L}_{u^\varepsilon(s)} ), u_v^\varepsilon(s) \right) ds
        \nonumber \\
        \leq & \int_{0}^{t} \| g(s, \cdot, u_v^\varepsilon(s), \mathcal{L}_{u^\varepsilon(s)} ) \|^2 ds
        + \int_{0}^{t} \| u_v^\varepsilon(s) \|^2 ds
        \nonumber \\
        \leq  & 4 \int_{0}^{t} \| \psi_g(s) \|_{L^\infty(\mathbb{R}^n) }^2
        \|  u_v^\varepsilon(s) \|^2
        ds
        + 2 (1+ 2
        \mathbb{E}\left[ \| u^\varepsilon(s) \|^2 \right]
        )
        \int_{0}^{t} \| \psi_g(s) \|^2 ds
        + \int_{0}^{t} \| u_v^\varepsilon(s) \|^2 ds
        \nonumber \\
        \leq & \int_{0}^{t} \left( 1 + 4 \| \psi_g (s) \|_{L^\infty(\mathbb{R}^n)}^2 \right) \| u_v^\varepsilon(s) \|^2 ds
        + 2 \left[ 1 + 2 M_1 ( 1 + \| u_0 \|^2) \right]
        \int_{0}^{t} \| \psi_g (s) \|^2 ds                        .
    \end{align}
    For the third term on the right-hand side of \eqref{w-1},
    by \eqref{sigma3} and Theorem \ref{mvsde-wellposed}, we have
    \begin{align}\label{w-4}
        & 2 \int_{0}^{t}
        \left( \sigma(s, u_v^\varepsilon (s), \mathcal{L}_{u^\varepsilon(s)} ) v(s),
        u_v^\varepsilon (s)
        \right) ds
        \nonumber \\
        \leq & \int_{0}^{t}
        \| \sigma(s, u_v^\varepsilon (s), \mathcal{L}_{u^\varepsilon(s)} ) \|_{L_2(l^2, H)}^2
        \| v(s) \|_{l^2}^2
        ds
        + \int_0^t \| u_v^\varepsilon (s) \|^2 ds
        \nonumber \\
        \leq
        & \int_{0}^{t}
        M_{\sigma,T} \left( 1  + \mathbb{E}\left[ \| u^\varepsilon(s) \|^2 \right] \right)
        \| v(s) \|_{l^2}^2
        ds
        + \int_0^t
        \left( 1 + M_{\sigma,T} \| v(s) \|_{l^2}^2 \right)
        \| u_v^\varepsilon (s) \|^2 ds  \nonumber \\
        \leq
        & M_{\sigma,T} \left[ 1 + M_1( 1 + \| u_0 \|^2 ) \right]
        \int_{0}^{t} \| v(s) \|_{l^2}^2 ds
        + \int_0^t
        \left( 1 + M_{\sigma,T} \| v(s) \|_{l^2}^2 \right)
        \| u_v^\varepsilon (s) \|^2 ds.
    \end{align}
    For the fourth term on the right-hand side of \eqref{w-1},
    by \eqref{sigma3} and Theorem \ref{mvsde-wellposed}, we have
    \begin{align}\label{w-6}
        & \varepsilon \int_{0}^{t}
        \left\| \sigma(s, u_v^\varepsilon (s), \mathcal{L}_{u^\varepsilon(s)} )
        \right\|^2_{L_2(l^2, H)} ds
        \nonumber \\
        \leq
        & \varepsilon \int_0^t
        M_{\sigma,T} \left( 1 + \| u_v^\varepsilon(s) \|^2 + \mathbb{E}\left[ \| u^\varepsilon(s) \|^2\right] \right) ds
        \nonumber \\
        \leq & \varepsilon T M_{\sigma,T} \left[ 1 + M_1( 1 + \|u_0\|^2 ) \right]
        + \varepsilon M_{\sigma,T} \int_0^t \| u_v^\varepsilon(s) \|^2 ds.
    \end{align}

    From \eqref{w-1}-\eqref{w-6},
    it follows that for all
    $t \in [0,T]$, $P$-almost surely,
    \begin{align}\label{w-7}
        & \|u_v^\varepsilon(t)\|^2
        + 2 \int_0^t \|(-\triangle)^{\frac{\alpha}{2}} u_v^\varepsilon (s) \|^2 ds
        + 2 \lambda_1 \int_{0}^{t} \| u_v^\varepsilon(s) \|_{L^p(\mathbb{R}^n)}^{p} ds
        \nonumber \\
        \leq
        & \| u_0 \|^2
        + \int_{0}^{t}
        \left( 2 + 4 \| \psi_g (s) \|_{L^\infty(\mathbb{R}^n)}^2
        + 2 \| \psi_1 (s)\|_{L^\infty(\mathbb{R}^n)}
        + \varepsilon M_{\sigma,T}
        + M_{\sigma,T} \| v(s) \|_{l^2}^2
        \right) \| u_v^\varepsilon(s) \|^2 ds
        \nonumber \\
        & + \left[ 1 + 2 M_1( 1 + \|u_0\|^2 ) \right]
        \left( 2 \int_{0}^{t} \| \psi_1(s) \|_{L^1(\mathbb{R}^n)} ds
        + 2 \int_{0}^{t} \| \psi_g (s) \|^2 ds
        + M_{\sigma,T} \int_{0}^{t} \| v(s) \|_{l^2}^2 ds
        \right)
        \nonumber \\
        & + \varepsilon T M_{\sigma,T} \left[ 1 + M_1( 1 + \|u_0\|^2 ) \right]
        + 2 \sqrt{\varepsilon} \int_{0}^{t}
        \left( u_v^\varepsilon (s), \sigma(s, u_v^\varepsilon (s), \mathcal{L}_{u^\varepsilon(s)} ) dW(s) \right).
    \end{align}

    If $u_0 \in H$ with $\|u_0\| \leq R$,
    and $v \in \mathcal{A}_N$,
    then we obtain by \eqref{w-7} that for all  $0 \leq r \leq t \leq T$,
    $P$-almost surely,
    \begin{align}\label{w-8}
        & \|u_v^\varepsilon(r)\|^2
        + 2 \int_0^r \|(-\triangle)^{\frac{\alpha}{2}} u_v^\varepsilon (s) \|^2 ds
        + 2 \lambda_1 \int_{0}^{r} \| u_v^\varepsilon(s) \|_{L^p(\mathbb{R}^n)}^{p} ds
        \nonumber \\
        \leq & R^2
        + \int_{0}^{r}
        \left( 2 + 4 \| \psi_g \|_{L^\infty(0,T; L^\infty(\mathbb{R}^n) ) }^2
        + 2 \| \psi_1 \|_{L^\infty(0,T; L^\infty(\mathbb{R}^n) ) }^2
        + \varepsilon M_{\sigma,T}
        + M_{\sigma,T} \| v(s) \|_{l^2}^2
        \right) \| u_v^\varepsilon(s) \|^2 ds
        \nonumber \\
        & + \left[ 1 + 2 M_1( 1 + R^2 ) \right]
        \left( 2 \int_{0}^{T} \| \psi_1(s) \|_{L^1(\mathbb{R}^n)} ds
        + 2 \int_{0}^{T} \| \psi_g (s) \|^2 ds
        + M_{\sigma,T} N
        \right)
        \nonumber \\
        & + \varepsilon T M_{\sigma,T} \left[ 1 + M_1( 1 + R^2 ) \right]
        + 2 \sqrt{\varepsilon} \int_{0}^{r}
        \left( u_v^\varepsilon (s), \sigma(s, u_v^\varepsilon (s), \mathcal{L}_{u^\varepsilon(s)} ) dW(s) \right)
        \nonumber \\
        \leq & c_1
        + c_1 \int_{0}^{r}
        \left( 1 + \| v(s) \|_{l^2}^2
        \right)
        \| u_v^\varepsilon (s) \|^2
        ds
        + K(t),
    \end{align}
    where $c_1 = c_1 (R, N, T)>0$,
    $K(t)= 2 \sqrt{\varepsilon}
    \sup_{r \in [0, t]}
    \left| \int_{0}^{r}
    \left( u_v^\varepsilon (s), \sigma(s, u_v^\varepsilon (s), \mathcal{L}_{u^\varepsilon(s)} ) dW(s)
    \right)
    \right|.
    $

    By Gronwall's inequality and \eqref{w-8}, we obtain for all $r \in [0,t]$,
    \begin{align}\label{w-9}
        & \|u_v^\varepsilon(r)\|^2
        + 2 \int_0^r \|(-\triangle)^{\frac{\alpha}{2}} u_v^\varepsilon (s) \|^2 ds
        + 2 \lambda_1 \int_{0}^{r} \| u_v^\varepsilon(s) \|_{L^p(\mathbb{R}^n)}^{p} ds
        \nonumber \\
        \leq & \left( c_1 + K(t) \right)
        e^{c_1 \int_0^r \left( 1 + \| v(s) \|_{l^2}^2 \right) ds}
        \nonumber \\
        \leq & \left( c_1 + K(t) \right) e^{c_1 (T + N)},  \ \ \    \mathbb{P} \text{-almost surely}.
    \end{align}
    Then by \eqref{w-9}, we obtain
    for all $t\in [0,T]$,
    \begin{align}\label{w-10}
        & \mathbb{E}
        \left[ \sup_{r \in [0,t]} \| u_v^\varepsilon(r) \|^2
        \right]
        + 2 \mathbb{E}
        \left[ \int_0^t \|(-\triangle)^{\frac{\alpha}{2}} u_v^\varepsilon (s) \|^2 ds
        \right]
        + 2 \lambda_1 \mathbb{E}
        \left[ \int_{0}^{t} \| u_v^\varepsilon(s) \|_{L^p(\mathbb{R}^n)}^{p} ds
        \right]
        \nonumber \\
        \leq & 2 \left( c_1
        + \mathbb{E} \left[ K(t) \right]
        \right) e^{c_1 (T + N)}  \nonumber \\
        \leq & c_2 + c_2 \mathbb{E} \left[ K(t) \right],
        \ \ \    \mathbb{P} \text{-almost surely},
    \end{align}
    where $c_2 = 2 \left( c_1 + 1 \right) e^{c_1 (T + N)}$.
    By the Burkholder inequality and \eqref{sigma3}, we have
    \begin{align}\label{w-11}
        c_2 \mathbb{E} \left[ K(t) \right]
        = & 2 c_2 \sqrt{\varepsilon}
        \mathbb{E}
        \left[ \sup_{r \in [0, t]}
        \left| \int_{0}^{r}
        \left( u_v^\varepsilon (s), \sigma(s, u_v^\varepsilon (s), \mathcal{L}_{u^\varepsilon(s)} ) dW(s)
        \right)
        \right|
        \right]  \nonumber \\
        \leq & 6 c_2 \sqrt{\varepsilon}
        \mathbb{E}\left[
        \left( \int_{0}^{t}
        \| u_v^\varepsilon(s) \|^2
        \| \sigma(s, u_v^\varepsilon (s), \mathcal{L}_{u^\varepsilon(s)} ) \|_{L_2(l^2, H)}^2
        ds
        \right)^{\frac{1}{2}}
        \right]
        \nonumber \\
        \leq & \frac{1}{2} \mathbb{E}
        \left[ \sup_{s \in [0,t]}
        \| u_v^\varepsilon(s) \|^2
        \right]
        + 18 c_2^2 \varepsilon \mathbb{E}
        \left[
        \int_{0}^{t} \| \sigma(s, u_v^\varepsilon (s), \mathcal{L}_{u^\varepsilon(s)} ) \|^2_{L_2(l^2, H)} ds
        \right]
        \nonumber \\
        \leq & \frac{1}{2} \mathbb{E}
        \left[ \sup_{s \in [0,t]}
        \| u_v^\varepsilon(s) \|^2
        \right]
        + 18 c_2^2 \varepsilon M_{\sigma,T}
        \mathbb{E}
        \left[ \int_{0}^{t}
        \left( 1 + \| u_v^\varepsilon(s) \|^2
        + \mathcal{L}_{u^\varepsilon(s)}(\|\cdot\|^2)
        \right)
        ds
        \right]  \nonumber \\
        = & \frac{1}{2} \mathbb{E}
        \left[ \sup_{s \in [0,t]}
        \| u_v^\varepsilon(s) \|^2
        \right]
        + 18 c_2^2 \varepsilon M_{\sigma,T}
        \left[ \int_{0}^{t}
        \left( 1 +  \mathbb{E} \left[ \| u_v^\varepsilon(s) \|^2 \right]
        + \mathbb{E} \left[ \|u^\varepsilon(s)\|^2 \right]
        \right)
        ds
        \right] \nonumber \\
        \leq & \frac{1}{2} \mathbb{E}
        \left[ \sup_{r \in [0,t]}
        \| u_v^\varepsilon(r) \|^2
        \right]
        + 18 c_2^2 \varepsilon T M_{\sigma,T} \left[ 1 + M_1(1+\|u_0\|^2) \right]
        \nonumber \\
        & + 18 c_2^2 \varepsilon M_{\sigma,T} \int_{0}^{t}
        \mathbb{E} \left[
        \sup_{r\in[0,s]}\| u_v^\varepsilon(r) \|^2 \right]
        ds.
    \end{align}

    By \eqref{w-10}-\eqref{w-11}, we obtain that for all $t\in[0,T]$ and $\varepsilon \in (0,1)$,
    \begin{align} \label{w-13}
        & \mathbb{E}
        \left[ \sup_{r \in [0,t]} \| u_v^\varepsilon(r) \|^2
        \right]
        + 4 \mathbb{E}
        \left[ \int_0^t \|(-\triangle)^{\frac{\alpha}{2}} u_v^\varepsilon (s) \|^2 ds
        \right]
        + 4 \lambda_1 \mathbb{E}
        \left[ \int_{0}^{t} \| u_v^\varepsilon(s) \|_{L^p(\mathbb{R}^n)}^{p} ds
        \right]
        \nonumber \\
        \leq & c_3
        + c_3 \int_{0}^{t} \mathbb{E}
        \left[  \sup_{r\in [0,s]}
        \| u_v^\varepsilon(r) \|^2
        \right]  ds,
    \end{align}
    where $c_3 = c_3(R, N, T) > 0$.

    By \eqref{w-13} and Gronwall's inequality,
    we have for all $t\in[0,T]$ and $\varepsilon\in (0,1)$,
    \begin{align*}
        & \mathbb{E}
        \left[ \sup_{r \in [0,t]} \| u_v^\varepsilon(r) \|^2
        \right]
        + 4 \mathbb{E}
        \left[ \int_0^t \|(-\triangle)^{\frac{\alpha}{2}} u_v^\varepsilon (s) \|^2 ds
        \right]
        + 4 \lambda_1 \mathbb{E}
        \left[ \int_{0}^{t} \| u_v^\varepsilon(s) \|_{L^p(\mathbb{R}^n)}^{p} ds
        \right]
        \leq c_3 e^{c_3 t},
    \end{align*}
    which implies \eqref{bound}.
 \end{proof}

 The next lemma is
 concerned with the convergence of
 solutions of \eqref{mvsde-2}-\eqref{mvsde-2 IV}
 as $\varepsilon \to 0$.

 \begin{lemma} \label{sdc}
    Suppose  $({\bf \Sigma 1})$-$({\bf \Sigma 3})$ hold
    and $u_0\in H$.
    Let $u^\varepsilon$
    and $u^0$ be the solutions
    of
    \eqref{mvsde-2}-\eqref{mvsde-2 IV} and \eqref{dde}-\eqref{dde IV},
    respectively.
    Then
    there exists   $C=C(T)>0$
    such that for all
    $\varepsilon \in (0,1)$,
    $$ \mathbb{E} \left[ \sup_{t \in [0,T]} \| u^\varepsilon(t) - u^0 (t) \|^2 \right]
    \leq C \varepsilon \left( 1 + \| u_0 \|^2
    \right).
    $$
 \end{lemma}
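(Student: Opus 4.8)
The plan is to set $w^\varepsilon(t)=u^\varepsilon(t)-u^0(t)$, note that $w^\varepsilon(0)=0$, and apply It\^{o}'s formula to $\|w^\varepsilon(t)\|^2$ after subtracting the deterministic equation \eqref{dde} from the stochastic equation \eqref{mvsde-2}. Since only $u^\varepsilon$ carries noise, the quadratic-variation correction produces the single term $\varepsilon\int_0^t\|\sigma(s,u^\varepsilon(s),\mathcal{L}_{u^\varepsilon(s)})\|_{L_2(l^2,H)}^2\,ds$, which by \eqref{sigma3} together with the uniform bound of Theorem \ref{mvsde-wellposed} is dominated by $\varepsilon C(1+\|u_0\|^2)$. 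This is precisely the source of the desired $\varepsilon$-rate, so the remaining work is to show that every other contribution is absorbed either on the left-hand side or into a Gronwall integral of $\mathbb{E}[\sup_{r\le s}\|w^\varepsilon(r)\|^2]$.

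The key observation is that $u^0(s)$ is deterministic, so $\mathcal{L}_{u^0(s)}=\delta_{u^0(s)}$ and the only coupling yields the identity $\mathbb{W}_2(\mathcal{L}_{u^\varepsilon(s)},\mathcal{L}_{u^0(s)})^2=\mathbb{E}[\|u^\varepsilon(s)-u^0(s)\|^2]=\mathbb{E}[\|w^\varepsilon(s)\|^2]$. For the drift, I would split $f(s,x,u^\varepsilon,\mathcal{L}_{u^\varepsilon})-f(s,x,u^0,\mathcal{L}_{u^0})$ into a same-measure difference, bounded below by $-\psi_4(s,x)|w^\varepsilon(s)|^2$ through the monotonicity \eqref{f-4}, plus a measure difference controlled by $\psi_3(s,x)\,\mathbb{W}_2(\mathcal{L}_{u^\varepsilon(s)},\delta_{u^0(s)})$ via \eqref{f-2}; Young's inequality then converts the cross term into a mixture of $\|\psi_3(s)\|^2\|w^\varepsilon(s)\|^2$ and $\mathbb{E}[\|w^\varepsilon(s)\|^2]$ integrands. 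The Lipschitz drift $g$ is handled identically using \eqref{g-2}, and the fractional term contributes a nonnegative $2\int_0^t\|(-\triangle)^{\frac{\alpha}{2}}w^\varepsilon\|^2\,ds$ that I simply discard.

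After taking the supremum over $[0,t]$, the stochastic integral is treated by the Burkholder-Davis-Gundy inequality exactly as in \eqref{sue-6}: one absorbs $\tfrac12\mathbb{E}[\sup_{s\le t}\|w^\varepsilon\|^2]$ on the left and is left with an $\varepsilon M_{\sigma,T}$-multiple of the uniform bound on the right, again carrying the $\varepsilon$ factor. Taking expectations, every term is then bounded by $\varepsilon C(1+\|u_0\|^2)$ plus $\int_0^t\big(c+\|\psi_3(s)\|^2+\|\psi_g(s)\|^2\big)\,\mathbb{E}[\sup_{r\le s}\|w^\varepsilon(r)\|^2]\,ds$, and Gronwall's inequality closes the estimate with a constant $C=C(T)$ independent of $\varepsilon$ and $u_0$. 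The main technical point, peculiar to the McKean-Vlasov setting, is that the measure dependence generates deterministic integrands $\mathbb{E}[\|w^\varepsilon(s)\|^2]$ alongside the pathwise $\|w^\varepsilon(s)\|^2$; this causes no difficulty, since after the $\mathbb{E}[\sup]$ step both are dominated by $\int_0^t\mathbb{E}[\sup_{r\le s}\|w^\varepsilon(r)\|^2]\,ds$, so a single application of Gronwall suffices.
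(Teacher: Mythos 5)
Your proposal is correct and follows essentially the same route as the paper: Itô's formula for $\|u^\varepsilon-u^0\|^2$, splitting the $f$- and $g$-differences into a state part (controlled by \eqref{f-4}, \eqref{g-2}) and a measure part (controlled by $\psi_3\,\mathbb{W}_2$), using $\mathbb{W}_2(\mathcal{L}_{u^\varepsilon(s)},\delta_{u^0(s)})^2=\mathbb{E}[\|u^\varepsilon(s)-u^0(s)\|^2]$, BDG for the martingale term, and Gronwall. The only (immaterial) deviation is that you bound $\varepsilon\int_0^t\|\sigma(s,u^\varepsilon,\mathcal{L}_{u^\varepsilon})\|^2_{L_2(l^2,H)}ds$ directly via \eqref{sigma3} and Theorem \ref{mvsde-wellposed}, while the paper first splits $\sigma$ using \eqref{sigmalip} and the deterministic bound on $u^0$; both yield the same $O(\varepsilon(1+\|u_0\|^2))$ contribution.
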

 \begin{proof}

    By \eqref{mvsde-2}-\eqref{mvsde-2 IV} and \eqref{dde}-\eqref{dde IV},
    we find that
    $u^\varepsilon  - u^0
    $ satisfies the   equation:
    \begin{align} \label{difference-1}
        & d \left( u^\varepsilon(t) - u^0 (t) \right)
        + (-\triangle)^\alpha \left( u^\varepsilon(t) - u^0 (t) \right) dt
        \nonumber \\
        &  + \left( f(t, \cdot,  u^\varepsilon(t), \mathcal{L}_{u^\varepsilon (t)} )
        - f(t, \cdot,  u^0(t), \mathcal{L}_{u^0(t)} )
        \right) dt
        \nonumber \\
        = & \left( g(t, \cdot, u^\varepsilon(t), \mathcal{L}_{u^\varepsilon (t)} )
        - g(t, \cdot, u^0(t), \mathcal{L}_{u^0(t)} )
        \right) dt
        + \sqrt{\varepsilon}
        \sigma(t, u^\varepsilon (t), \mathcal{L}_{u^\varepsilon (t)} ) dW(t),
    \end{align}
    with initial condition $u^\varepsilon(0) - u^0(0) = 0$.

    Applying It\^{o}'s formula to \eqref{difference-1}, we have
    \begin{align} \label{difference-2}
        & \| u^\varepsilon(t) - u^0 (t) \|^2
        + 2 \int_0^t \| (-\triangle)^{\frac{\alpha}{2}} \left( u^\varepsilon(s) - u^0 (s) \right) \|^2 ds
        \nonumber \\
        &  + 2 \int_0^t \int_{\mathbb{R}^n}
        \left( f(s, x, u^\varepsilon(s), \mathcal{L}_{u^\varepsilon(s)} )
        - f(s, x, u^0(s), \mathcal{L}_{u^0(s)} )
        \right)
        \left( u^\varepsilon(s) - u^0 (s)
        \right) dx ds
        \nonumber \\
        = & 2 \int_0^t
        \left( g(s, \cdot, u^\varepsilon(s), \mathcal{L}_{u^\varepsilon (s)} )
        - g(s, \cdot, u^0(s) , \mathcal{L}_{u^0(s)} ),
        u^\varepsilon(s) - u^0 (s)
        \right) ds
        \nonumber \\
        & + \varepsilon
        \int_0^t
        \| \sigma(s, u^\varepsilon (s), \mathcal{L}_{u^\varepsilon (s)} )
        \|_{L_2(l^2,H)}^2 ds
        \nonumber \\
        & + 2 \sqrt{\varepsilon}
        \int_0^t
        \left( u^\varepsilon(s) - u^0 (s),
        \sigma(s, u^\varepsilon (s), \mathcal{L}_{u^\varepsilon (s)} ) dW(s)
        \right).
    \end{align}
    For the third term on the left-hand side of \eqref{difference-2},
    by \eqref{f-2}, \eqref{f-4} and H\"{o}lder's inequality, we have
    \begin{align} \label{difference-3}
        & 2 \int_0^t \int_{\mathbb{R}^n}
        \left( f(s, x, u^\varepsilon(s), \mathcal{L}_{u^\varepsilon(s)} )
        - f(s, x, u^0(s), \mathcal{L}_{u^0(s)} )
        \right)
        \left( u^\varepsilon(s) - u^0 (s)
        \right) dx ds
        \nonumber \\
        = & 2 \int_0^t \int_{\mathbb{R}^n}
        \left( f(s, x, u^\varepsilon(s), \mathcal{L}_{u^\varepsilon(s)} )
        - f(s, x, u^0(s), \mathcal{L}_{u^\varepsilon(s)} )
        \right)
        \left( u^\varepsilon(s) - u^0 (s)
        \right) dx ds
        \nonumber \\
        & + 2 \int_0^t \int_{\mathbb{R}^n}
        \left( f(s, x, u^0(s), \mathcal{L}_{u^\varepsilon(s)} )
        - f(s, x, u^0(s), \mathcal{L}_{u^0(s)} )
        \right)
        \left( u^\varepsilon(s) - u^0 (s)
        \right) dx ds
        \nonumber \\
        \geq & -2 \int_0^t \int_{\mathbb{R}^n}
        \psi_4(s, x) | u^\varepsilon(s) - u^0(s) |^2
        dx ds
        \nonumber \\
        & -2 \int_0^t \int_{\mathbb{R}^n}
        \psi_3(s, x)
        \mathbb{W}_2
        \left(
        \mathcal{L}_{u^\varepsilon(s)},
        \mathcal{L}_{u^0(s)}
        \right)
        | u^\varepsilon(s) - u^0 (s) |
        dx ds
        \nonumber \\
        \geq & -2 \int_0^t
        \| \psi_4(s) \|_{L^\infty(\mathbb{R}^n)}
        \| u^\varepsilon(s) - u^0(s) \|^2
        ds  \nonumber \\
        & -2 \int_0^t
        \| \psi_3(s) \|
        \| u^\varepsilon(s) - u^0 (s) \|
        \mathbb{W}_2
        \left(
        \mathcal{L}_{u^\varepsilon(s)},
        \mathcal{L}_{u^0(s)}
        \right)
        ds  \nonumber \\
        \geq & - \int_0^t
        \left( 2 \| \psi_4(s) \|_{L^\infty(\mathbb{R}^n)}
        + \| \psi_3(s) \|^2
        \right)
        \| u^\varepsilon(s) - u^0(s) \|^2
        ds
        - \int_0^t
        \mathbb{W}_2
        \left(
        \mathcal{L}_{u^\varepsilon(s)},
        \mathcal{L}_{u^0(s)}
        \right)^2
        ds.
    \end{align}
    For the first term on the right-hand side of \eqref{difference-2},
    by \eqref{g-2} and H\"{o}lder's inequality, we obtain
    \begin{align} \label{difference-4}
        & 2 \int_0^t
        \left( g(s, \cdot, u^\varepsilon(s), \mathcal{L}_{u^\varepsilon (s)} )
        - g(s, \cdot, u^0(s), \mathcal{L}_{u^0(s)} ),
        u^\varepsilon(s) - u^0 (s)
        \right) ds
        \nonumber \\
        \leq & 2 \int_0^t \int_{\mathbb{R}^n}
        \left| g(s, x, u^\varepsilon(s), \mathcal{L}_{u^\varepsilon (s)} )
        - g(s, x, u^0(s), \mathcal{L}_{u^0(s)} )
        \right|
        | u^\varepsilon(s) - u^0 (s) |
        dx ds \nonumber \\
        \leq & 2 \int_0^t \int_{\mathbb{R}^n}
        \psi_g(s, x)
        \left(  | u^\varepsilon(s) - u^0(s) |
        + \mathbb{W}_2 \left( \mathcal{L}_{u^\varepsilon(s)}, \mathcal{L}_{u^0(s)} \right)
        \right)
        | u^\varepsilon(s) - u^0 (s) |
        dx ds \nonumber \\
        \leq & 2 \int_0^t
        \|\psi_g(s)\|_{L^\infty(\mathbb{R}^n)}
        \| u^\varepsilon(s) - u^0(s) \|^2
        ds
        + 2 \int_0^t
        \| \psi_g(s) \|
        \| u^\varepsilon(s) - u^0 (s) \|
        \mathbb{W}_2 \left( \mathcal{L}_{u^\varepsilon(s)}, \mathcal{L}_{u^0(s)} \right)
        ds \nonumber \\
        \leq & \int_0^t
        \left( 2 \|\psi_g(s)\|_{L^\infty(\mathbb{R}^n)}
        +  \| \psi_g(s) \|^2
        \right)
        \| u^\varepsilon(s) - u^0(s) \|^2
        ds
        + \int_0^t
        \mathbb{W}_2 \left( \mathcal{L}_{u^\varepsilon(s)}, \mathcal{L}_{u^0(s)}
        \right)^2
        ds.
    \end{align}
    Then from \eqref{difference-2}-\eqref{difference-4},
    it follows that
    \begin{align} \label{difference-5}
        & \| u^\varepsilon(t) - u^0 (t) \|^2
        + 2 \int_0^t \| (-\triangle)^{\frac{\alpha}{2}} \left( u^\varepsilon(s) - u^0 (s) \right) \|^2 ds
        \nonumber \\
        \leq & \int_0^t
        \left( 2 \| \psi_4(s) \|_{L^\infty(\mathbb{R}^n)}
        + \| \psi_3(s) \|^2
        + 2 \|\psi_g(s)\|_{L^\infty(\mathbb{R}^n)}
        + \| \psi_g(s) \|^2
        \right)
        \| u^\varepsilon(s) - u^0(s) \|^2
        ds  \nonumber \\
        & + 2 \int_0^t
        \mathbb{W}_2
        \left(
        \mathcal{L}_{u^\varepsilon(s)},
        \mathcal{L}_{u^0(s)}
        \right)^2
        ds
        + \varepsilon
        \int_0^t
        \| \sigma(s, u^\varepsilon (s), \mathcal{L}_{u^\varepsilon (s)} )
        \|_{L_2(l^2,H)}^2 ds
        \nonumber \\
        & + 2 \sqrt{\varepsilon}
        \int_0^t
        \left( u^\varepsilon(s) - u^0 (s),
        \sigma(s, u^\varepsilon (s), \mathcal{L}_{u^\varepsilon (s)} ) dW(s)
        \right).
    \end{align}
    For the third term on the right-hand side of \eqref{difference-5},
    by \eqref{sigma3} and \eqref{sigmalip}, we obtain
    \begin{align} \label{difference-6}
        & \varepsilon
        \int_0^t
        \| \sigma(s, u^\varepsilon (s), \mathcal{L}_{u^\varepsilon (s)} )
        \|_{L_2(l^2,H)}^2 ds
        \nonumber \\
        \leq  & 2 \varepsilon
        \int_0^t
        \| \sigma(s, u^\varepsilon (s), \mathcal{L}_{u^\varepsilon (s)} )
        - \sigma(s, u^0 (s), \mathcal{L}_{u^0(s)} )
        \|_{L_2(l^2,H)}^2 ds
        \nonumber \\
        & + 2 \varepsilon
        \int_0^t
        \| \sigma(s, u^0 (s), \mathcal{L}_{u^0(s)} )
        \|_{L_2(l^2,H)}^2 ds
        \nonumber \\
        \leq & 2 \varepsilon L_{\sigma}
        \int_0^t
        \left( \| u^\varepsilon (s) - u^0 (s) \|^2
        + \mathbb{W}_2 \left( \mathcal{L}_{u^\varepsilon (s)}, \mathcal{L}_{u^0(s)}
        \right)^2
        \right)
        ds  \nonumber \\
        & + 2 \varepsilon T M_{\sigma,T}
        \left( 1 + 2 \sup_{s \in [0,T]} \| u^0(s) \|^2
        \right).
    \end{align}
    For the fourth term on the right-hand side of \eqref{difference-5},
    by the Burkholder inequality and \eqref{difference-6}, we obtain
    \begin{align} \label{difference-7}
        & 2 \mathbb{E}
        \left[ \sqrt{\varepsilon}
        \sup_{r \in [0,t]}
        \left| \int_0^r
        \left( u^\varepsilon(s) - u^0 (s),
        \sigma(s, u^\varepsilon (s), \mathcal{L}_{u^\varepsilon (s)} ) dW(s)
        \right)
        \right|
        \right]
        \nonumber \\
        \leq & 6 \sqrt{\varepsilon} \mathbb{E}
        \left[
        \left(   \int_0^t
        \| u^\varepsilon(s) - u^0(s) \|^2
        \| \sigma(s, u^\varepsilon (s), \mathcal{L}_{u^\varepsilon (s)} ) \|_{L_2(l^2,H)}^2
        ds
        \right)^{\frac{1}{2}}
        \right]
        \nonumber \\
        \leq & \frac{1}{2} \mathbb{E}
        \left[ \sup_{s \in [0,t]}
        \| u^\varepsilon(s) - u^0(s) \|^2
        \right]
        + 18 \varepsilon \mathbb{E}
        \left[
        \int_{0}^{t} \| \sigma(s, u_v^\varepsilon (s), \mathcal{L}_{u^\varepsilon(s)} ) \|^2_{L_2(l^2, H)} ds
        \right]
        \nonumber \\
        \leq & \frac{1}{2} \mathbb{E}
        \left[ \sup_{s \in [0,t]}
        \| u^\varepsilon(s) - u^0(s) \|^2
        \right]
        + 36 \varepsilon L_{\sigma}
        \int_0^t \mathbb{E}
        \left[ \| u^\varepsilon (s) - u^0 (s) \|^2
        + \mathbb{W}_2 \left( \mathcal{L}_{u^\varepsilon (s)}, \mathcal{L}_{u^0(s)}
        \right)^2
        \right]
        ds  \nonumber \\
        & + 36 \varepsilon T M_{\sigma,T}
        \left( 1 + 2 \sup_{s \in [0,T]} \| u^0(s) \|^2
        \right).
    \end{align}
    Then by \eqref{difference-5}-\eqref{difference-7}, we have
    for all $t\in [0,T]$,
    \begin{align} \label{difference-8}
        & \mathbb{E}
        \left[ \sup_{r \in [0,t]}
        \| u^\varepsilon(r) - u^0 (r) \|^2
        \right]
        \nonumber \\
        \leq & 2 \int_0^t
        \left( 2 \| \psi_4(s) \|_{L^\infty(\mathbb{R}^n)}
        + \| \psi_3(s) \|^2
        + 2 \|\psi_g(s)\|_{L^\infty(\mathbb{R}^n)}
        + \| \psi_g(s) \|^2
        \right)
        \mathbb{E}
        \left[ \| u^\varepsilon(s) - u^0(s) \|^2
        \right]
        ds  \nonumber \\
        & + 4 \int_0^t
        \mathbb{W}_2
        \left(
        \mathcal{L}_{u^\varepsilon(s)},
        \mathcal{L}_{u^0(s)}
        \right)^2
        ds
        + 76 \varepsilon T M_{\sigma,T}
        \left( 1 + 2 \sup_{s \in [0,T]} \| u^0(s) \|^2
        \right)
        \nonumber \\
        & + 76 \varepsilon L_{\sigma}
        \int_0^t
        \left(
        \mathbb{E} \left[ \| u^\varepsilon (s) - u^0 (s) \|^2 \right]
        + \mathbb{W}_2 \left( \mathcal{L}_{u^\varepsilon (s)}, \mathcal{L}_{u^0(s)}
        \right)^2
        \right)
        ds  \nonumber \\
        \leq & 2 \int_0^t
        \left( 2 \| \psi_4(s) \|_{L^\infty(\mathbb{R}^n)}
        + \| \psi_3(s) \|^2
        + 2 \|\psi_g(s)\|_{L^\infty(\mathbb{R}^n)}
        + \| \psi_g(s) \|^2
        \right)
        \mathbb{E}
        \left[ \| u^\varepsilon(s) - u^0(s) \|^2
        \right]
        ds  \nonumber \\
        & + \left( 4 + 152 \varepsilon L_{\sigma} \right)
        \int_0^t
        \mathbb{E}
        \left[\| u^\varepsilon(s) - u^0(s) \|^2
        \right]
        ds
        + 76 \varepsilon T M_{\sigma,T}
        \left( 1 + 2 \sup_{s \in [0,T]} \| u^0(s) \|^2
        \right).
    \end{align}
    Applying Gronwall's inequality to \eqref{difference-8}, we find,
    for all $t\in [0,T]$,
    \begin{align} \label{difference-9}
        \mathbb{E}
        \left[ \sup_{r \in [0,t]}
        \| u^\varepsilon(r) - u^0 (r) \|^2
        \right]
        \leq  C_T \varepsilon
        \left( 1 + 2 \sup_{s \in [0,T]} \| u^0(s) \|^2
        \right),
    \end{align}
    where
    $C_T = 76 T M_{\sigma,T}
    e^{ 2 \int_0^T
        \left( 2 \| \psi_4(s) \|_{L^\infty(\mathbb{R}^n)}
        + \| \psi_3(s) \|^2
        + 2 \|\psi_g(s)\|_{L^\infty(\mathbb{R}^n)}
        + \| \psi_g(s) \|^2
        + 2
        + 76 \varepsilon L_{\sigma}
        \right)
        ds}.
    $
    Then by \eqref{difference-8} and Theorem \ref{dde-wellposed}, we
    get
    \begin{align*}
        & \mathbb{E}
        \left[ \sup_{r \in [0,T]}
        \| u^\varepsilon(r) - u^0 (r) \|^2
        \right]
        \leq  C_T \varepsilon
        \left( 1 + 2 M_0( 1 + \| u_0 \|^2
        \right),
    \end{align*}
    as desired.
 \end{proof}

 Next, we verify
 $({\bf H2})$ for
 $\mathcal{G} ^\varepsilon$ and $\mathcal{G}^0$
 as
 in Proposition \ref{ldpth}.

 \begin{theorem}\label{h2}
    Suppose that $({\bf \Sigma 1})$-$({\bf \Sigma 3})$ hold,
    and $\{ v^\varepsilon \} \subseteq \mathcal{A}_N$ with $ N>0 $.
    Then
    $$
    \lim_{\varepsilon \to 0}
    \left ( \mathcal{G}^\varepsilon
    \left( W + \varepsilon^{-\frac{1}{2}}
    \int_{0}^{\cdot} v^\varepsilon(t) dt
    \right)
    - \mathcal{G}^0
    \left( \int_{0}^{\cdot} v^\varepsilon(t) dt
    \right)
    \right ) = 0
    \ \ \  \text{in probability}
    $$
    in $C([0,T], H) \bigcap L^2(0,T; V)$.
 \end{theorem}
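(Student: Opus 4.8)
The plan is to verify $({\bf H2})$ by a direct energy estimate on the difference of the two controlled solutions, bypassing the compactness arguments of Theorem \ref{h1}. By Lemma \ref{measmap} and the definition \eqref{G0}, the quantity $\mathcal{G}^\varepsilon\big(W + \varepsilon^{-\frac{1}{2}}\int_0^\cdot v^\varepsilon(t)dt\big)$ equals $u_{v^\varepsilon}^\varepsilon$, the solution of the controlled stochastic equation \eqref{sce} with control $v^\varepsilon$, whereas $\mathcal{G}^0\big(\int_0^\cdot v^\varepsilon(t)dt\big)$ equals $u_{v^\varepsilon}$, the solution of the controlled deterministic equation \eqref{contequ} with the same control. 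Setting $w^\varepsilon = u_{v^\varepsilon}^\varepsilon - u_{v^\varepsilon}$ (so $w^\varepsilon(0)=0$), it suffices to show $\mathbb{E}\big[\|w^\varepsilon\|_{C([0,T],H)}^2 + \|w^\varepsilon\|_{L^2(0,T;V)}^2\big] \to 0$, since $L^1(\Omega)$-convergence of the squared norm yields convergence to $0$ in probability in $C([0,T],H)\cap L^2(0,T;V)$.

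The decisive structural point is that \eqref{sce} is driven by the law $\mathcal{L}_{u^\varepsilon(t)}$ of the McKean--Vlasov solution while \eqref{contequ} is driven by $\mathcal{L}_{u^0(t)}$; the gap between them is quantified by Lemma \ref{sdc}, which gives the uniform bound $\mathbb{W}_2(\mathcal{L}_{u^\varepsilon(s)},\mathcal{L}_{u^0(s)})^2 \le \mathbb{E}\|u^\varepsilon(s)-u^0(s)\|^2 \le C\varepsilon(1+\|u_0\|^2)$ for $s\in[0,T]$. I would apply It\^{o}'s formula to $\|w^\varepsilon(t)\|^2$ and split each nonlinear difference across its two arguments, estimating the $f$-term by the monotonicity \eqref{f-4} in the state together with the Wasserstein--Lipschitz bound \eqref{f-2} in the measure, the $g$-term by \eqref{g-2}, the control term $2\big((\sigma(s,u_{v^\varepsilon}^\varepsilon,\mathcal{L}_{u^\varepsilon}) - \sigma(s,u_{v^\varepsilon},\mathcal{L}_{u^0}))v^\varepsilon,w^\varepsilon\big)$ by \eqref{sigmalip} and Young's inequality, and the quadratic-variation term $\varepsilon\int_0^t\|\sigma(s,u_{v^\varepsilon}^\varepsilon,\mathcal{L}_{u^\varepsilon})\|_{L_2(l^2,H)}^2 ds$ by the growth bound \eqref{sigma3}. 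Collecting terms, I expect a pathwise inequality, valid for all $r\le t$,
\begin{align*}
\|w^\varepsilon(r)\|^2 + 2\int_0^r\|(-\triangle)^{\frac{\alpha}{2}}w^\varepsilon(s)\|^2 ds
\leq \int_0^r\phi(s)\|w^\varepsilon(s)\|^2 ds + G_\varepsilon(t),
\end{align*}
where $\phi(s)=c\big(1+\|\psi_4(s)\|_{L^\infty(\mathbb{R}^n)}+\|\psi_3(s)\|^2+\|\psi_g(s)\|_{L^\infty(\mathbb{R}^n)}+\|\psi_g(s)\|^2+\|v^\varepsilon(s)\|_{l^2}\big)$ and $G_\varepsilon(t)\ge 0$, constant in $r$, collects all the $O(\varepsilon)$ contributions: the $\mathbb{W}_2^2$ terms bounded by Lemma \ref{sdc}, the quadratic-variation term, and the martingale supremum $2\sqrt\varepsilon\sup_{r\le t}\big|\int_0^r(w^\varepsilon,\sigma\,dW)\big|$.

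The crucial obstacle is that $\phi$ contains the \emph{random} coefficient $\|v^\varepsilon(s)\|_{l^2}$, so the expectation cannot be taken before Gronwall (the product $\mathbb{E}[\|v^\varepsilon\|_{l^2}\|w^\varepsilon\|^2]$ does not factor). The resolution is to use $v^\varepsilon\in\mathcal{A}_N$, which gives the deterministic almost-sure bound $\int_0^T\|v^\varepsilon(s)\|_{l^2}ds\le\sqrt{TN}$ by Cauchy--Schwarz; hence $\int_0^T\phi(s)ds$ is almost surely dominated by a deterministic constant, and a \emph{pathwise} Gronwall inequality (with $G_\varepsilon(t)$ as forcing) yields $\sup_{r\le t}\|w^\varepsilon(r)\|^2\le C_0\,G_\varepsilon(t)$ with $C_0$ deterministic. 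Taking expectations, treating the martingale supremum inside $G_\varepsilon$ by the Burkholder--Davis--Gundy inequality and absorbing the resulting $\tfrac12\mathbb{E}[\sup_r\|w^\varepsilon\|^2]$ into the left-hand side, and using the uniform estimate \eqref{bound} of Lemma \ref{pre-h2} together with Theorem \ref{mvsde-wellposed} to bound $\mathbb{E}\int_0^t\|\sigma\|_{L_2(l^2,H)}^2 ds$, I obtain $\mathbb{E}[\sup_{t\le T}\|w^\varepsilon(t)\|^2]\le C\varepsilon\to 0$. Finally, the $L^2(0,T;V)$-bound follows by rearranging the energy inequality at $t=T$, namely $2\mathbb{E}\int_0^T\|(-\triangle)^{\frac{\alpha}{2}}w^\varepsilon\|^2 ds\le\mathbb{E}\int_0^T\phi\|w^\varepsilon\|^2 ds+\mathbb{E}[G_\varepsilon(T)]$, where the first term is $\le C\,\mathbb{E}[\sup\|w^\varepsilon\|^2]=O(\varepsilon)$ and the martingale part of $\mathbb{E}[G_\varepsilon(T)]$ is $O(\varepsilon)$ by one further application of BDG and Cauchy--Schwarz, now legitimate since $\mathbb{E}[\sup\|w^\varepsilon\|^2]=O(\varepsilon)$ is already established. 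No compactness is needed, because we compare two explicit solutions driven by the same control; the only genuinely new ingredients relative to the distribution-independent case are the quantitative law-matching of Lemma \ref{sdc} and the pathwise ordering of the Gronwall step forced by the randomness of $v^\varepsilon$.
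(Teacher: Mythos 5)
Your proposal is correct, and its first half (It\^{o}'s formula on the difference $w^\varepsilon=u_{v^\varepsilon}^\varepsilon-u_{v^\varepsilon}$, splitting each nonlinearity across its state and measure arguments, controlling the measure gap by Lemma \ref{sdc}, and a pathwise Gronwall step made deterministic by $\int_0^T\|v^\varepsilon(s)\|_{l^2}^2\,ds\le N$ a.s.) coincides with the paper's derivation of \eqref{h2-8}--\eqref{h2-10}. Where you genuinely diverge is the endgame. The paper localizes with the stopping time $\tau_R^\varepsilon=\inf\{t:\|u_{v^\varepsilon}^\varepsilon(t)\|\ge R\}\wedge T$, so that the quadratic-variation term in \eqref{h2-11} converges \emph{almost surely} (using the pathwise bound $\|u_{v^\varepsilon}^\varepsilon\|\le R$ on the stopped interval) and the martingale term is handled by Doob's inequality; it then removes the localization via $\mathbb{P}(\tau_R^\varepsilon<T)\le M_3/R^2$ from Lemma \ref{pre-h2}, which forces the two-parameter limit $\varepsilon\to0$ then $R\to\infty$ and yields only convergence in probability. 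You instead take expectations directly after the pathwise Gronwall, bound the quadratic-variation term in $L^1(\Omega)$ by \eqref{sigma3}, \eqref{bound} and Theorem \ref{mvsde-wellposed}, and absorb the Burkholder--Davis--Gundy contribution into $\tfrac12\mathbb{E}[\sup_t\|w^\varepsilon(t)\|^2]$; the absorption is legitimate because this quantity is a priori finite (Lemma \ref{pre-h2} for $u_{v^\varepsilon}^\varepsilon$ and the deterministic bound \eqref{dceue} for $u_{v^\varepsilon}$). This buys the stronger conclusion $\mathbb{E}\big[\|w^\varepsilon\|^2_{C([0,T],H)}+\|w^\varepsilon\|^2_{L^2(0,T;V)}\big]=O(\varepsilon)$, which implies the in-probability statement of $({\bf H2})$ and dispenses with the stopping time and the iterated limit; the trade-off is that you must invoke the second-moment bound of Lemma \ref{pre-h2} at the absorption step rather than only at the tail estimate $\mathbb{P}(\tau_R^\varepsilon<T)$. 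Both routes are sound; yours is arguably cleaner and quantitatively sharper.
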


 \begin{proof}
    Let $u_{v^\varepsilon}^\varepsilon
    = \mathcal{G}^\varepsilon
    \left( W + \varepsilon^{-\frac{1}{2}} \int_{0}^{\cdot} v^\varepsilon(t) dt
    \right)$.
    By \eqref{sce} we have
    \begin{align} \label{h2-1}
        & d u_{v^\varepsilon}^\varepsilon (t)
        + (-\triangle)^\alpha u_{v^\varepsilon}^\varepsilon (t) dt
        + f(t, \cdot, u_{v^\varepsilon}^\varepsilon(t), \mathcal{L}_{u^\varepsilon(t)} ) dt
        \\
        & = g(t, \cdot, u_{v^\varepsilon}^\varepsilon(t), \mathcal{L}_{u^\varepsilon(t)} ) dt
        + \sigma(t, u_{v^\varepsilon}^\varepsilon (t), \mathcal{L}_{u^\varepsilon(t)} ) v^\varepsilon(t) dt
        + \sqrt{\varepsilon} \sigma(t, u_{v^\varepsilon}^\varepsilon(t), \mathcal{L}_{u^\varepsilon(t)} ) dW(t)
    \end{align}
    with initial data
    $u_{v^\varepsilon}^\varepsilon(0) = u_0 \in H$.

    Let $u_{v^\varepsilon}
    = \mathcal{G}^0 \left( \int_{0}^{\cdot} v^\varepsilon(t) dt \right)$.
    Then $u_{v^\varepsilon}$ is the unique solution of
    \begin{align} \label{h2-2}
        & \frac{\partial u_{v^\varepsilon} (t)}{\partial t}
        + (-\triangle)^\alpha u_{v^\varepsilon} (t)
        + f(t, \cdot,  u_{v^\varepsilon}(t), \mathcal{L}_{u^0(t)} )
        \nonumber \\
        = & g(t, \cdot, u_{v^\varepsilon}(t), \mathcal{L}_{u^0(t)} )
        + \sigma(t, u_{v^\varepsilon}(t), \mathcal{L}_{u^0(t)}) v^\varepsilon(t)
    \end{align}
    with initial data
    $u_{v^\varepsilon}(0) = u_0 \in H$.
    It  is sufficient to show
    $ u_{v^\varepsilon}^\varepsilon - u_{v^\varepsilon}$
    converges to $0$
    in probability  in
    $C([0,T], H) \bigcap L^2(0,T; V)$
    as $\varepsilon \rightarrow 0$.

    By \eqref{h2-1} and \eqref{h2-2}, we have
    \begin{align} \label{h2-3}
        & d \left( u_{v^\varepsilon}^\varepsilon (t)
        - u_{v^\varepsilon}(t)
        \right)
        + (-\triangle)^\alpha
        \left( u_{v^\varepsilon}^\varepsilon (t)
        - u_{v^\varepsilon}(t)
        \right) dt
        \nonumber \\
        & + \left( f(t, \cdot, u_{v^\varepsilon}^\varepsilon(t), \mathcal{L}_{u^\varepsilon(t)} )
        - f(t, \cdot, u_{v^\varepsilon}(t), \mathcal{L}_{u^0(t)} )
        \right) dt  \nonumber \\
        = & \left( g(t, \cdot, u_{v^\varepsilon}^\varepsilon(t), \mathcal{L}_{u^\varepsilon(t)} )
        - g(t, \cdot, u_{v^\varepsilon}(t), \mathcal{L}_{u^0(t)} )
        \right) dt  \nonumber \\
        & + \left( \sigma(t, u_{v^\varepsilon}^\varepsilon(t), \mathcal{L}_{u^\varepsilon(t)} )
        - \sigma(t, u_{v^\varepsilon}(t), \mathcal{L}_{u^0(t)} )
        \right) v^\varepsilon(t) dt
        + \sqrt{\varepsilon} \sigma(t, u_{v^\varepsilon}^\varepsilon(t), \mathcal{L}_{u^\varepsilon(t)} ) dW(t),
    \end{align}
    with initial data
    $u_{v^\varepsilon}^\varepsilon(0) - u_{v^\varepsilon}(0) = 0$.
    Then applying It\^{o}'s formula to \eqref{h2-3}, we obtain
    \begin{align}\label{h2-4}
        & \| u_{v^\varepsilon}^\varepsilon(t) - u_{v^\varepsilon}(t) \|^2
        + 2 \int_0^t
        \| (-\triangle)^{\frac{\alpha}{2}}
        \left( u_{v^\varepsilon}^\varepsilon (s)
        - u_{v^\varepsilon}(s)
        \right)
        \|^2 ds  \nonumber \\
        & + 2 \int_{0}^{t} \int_{\mathbb{R}^n}
        \left( f(s, x, u_{v^\varepsilon}^\varepsilon(s), \mathcal{L}_{u^\varepsilon(s)} )
        - f(s, x, u_{v^\varepsilon}(s), \mathcal{L}_{u^0(s)} )
        \right)
        \left( u_{v^\varepsilon}^\varepsilon(s) - u_{v^\varepsilon}(s)
        \right) dx ds  \nonumber \\
        = & 2 \int_{0}^{t}
        \left( g(s, \cdot, u_{v^\varepsilon}^\varepsilon(s), \mathcal{L}_{u^\varepsilon(s)} )
        - g(s, \cdot, u_{v^\varepsilon}(s), \mathcal{L}_{u^0(s)} ),
        u_{v^\varepsilon}^\varepsilon(s) - u_{v^\varepsilon}(s)
        \right) ds  \nonumber \\
        & + 2 \int_{0}^{t}
        \left(
        \left( \sigma(s, u_{v^\varepsilon}^\varepsilon(s), \mathcal{L}_{u^\varepsilon(s)} )
        - \sigma(s, u_{v^\varepsilon}(s), \mathcal{L}_{u^0(s)})
        \right) v^\varepsilon(s),
        u_{v^\varepsilon}^\varepsilon(s) - u_{v^\varepsilon}(s)
        \right) ds
        \nonumber \\
        & + \varepsilon \int_{0}^{t}
        \| \sigma(s, u_{v^\varepsilon}^\varepsilon(s), \mathcal{L}_{u^\varepsilon(s)}) \|^2_{L_2(l^2, H)}ds
        \nonumber \\
        & + 2 \sqrt{\varepsilon} \int_{0}^{t}
        \left( u_{v^\varepsilon}^\varepsilon(s) - u_{v^\varepsilon}(s),
        \sigma(s, u_{v^\varepsilon}^\varepsilon(s), \mathcal{L}_{u^\varepsilon(s)} ) dW(s)
        \right).
    \end{align}
    For the third term on the left-hand side of \eqref{h2-4},
    by \eqref{f-2} and \eqref{f-4}, we obtain
    \begin{align} \label{h2-5}
        & 2 \int_{0}^{t} \int_{\mathbb{R}^n}
        \left( f(s, x, u_{v^\varepsilon}^\varepsilon(s), \mathcal{L}_{u^\varepsilon(s)} )
        - f(s, x, u_{v^\varepsilon}(s), \mathcal{L}_{u^0(s)} )
        \right)
        \left( u_{v^\varepsilon}^\varepsilon(s) - u_{v^\varepsilon}(s)
        \right) dx ds  \nonumber \\
        = & 2 \int_{0}^{t} \int_{\mathbb{R}^n}
        \left( f(s, x, u_{v^\varepsilon}^\varepsilon(s), \mathcal{L}_{u^\varepsilon(s)} )
        - f(s, x, u_{v^\varepsilon}(s), \mathcal{L}_{u^\varepsilon(s)} )
        \right)
        \left( u_{v^\varepsilon}^\varepsilon(s) - u_{v^\varepsilon}(s)
        \right) dx ds  \nonumber \\
        & + 2 \int_{0}^{t} \int_{\mathbb{R}^n}
        \left( f(s, x, u_{v^\varepsilon}(s), \mathcal{L}_{u^\varepsilon(s)} )
        - f(s, x, u_{v^\varepsilon}(s), \mathcal{L}_{u^0(s)} )
        \right)
        \left( u_{v^\varepsilon}^\varepsilon(s) - u_{v^\varepsilon}(s)
        \right) dx ds  \nonumber \\
        \geq & - 2
        \int_0^t
        \| \psi_4 (s)\|
        _{  L^\infty(\mathbb{R}^n )}\| u_{v^\varepsilon}^\varepsilon(s)
        - u_{v^\varepsilon}(s) \|^2 ds
        \nonumber \\
        & - 2
        \int_0^t \mathbb{W}_2(\mathcal{L}_{u^\varepsilon(s)}, \mathcal{L}_{u^0(s)})
        \| \psi_3 (s)\|
        \| u_{v^\varepsilon}^\varepsilon(s)
        - u_{v^\varepsilon}(s) \| ds  \nonumber \\
        \geq & -
        \int_0^t
        \left( 2\| \psi_4
        (s)  \|_{  L^\infty(\mathbb{R}^n))}
        + \| \psi_3(s) \|^2
        \right)
        \| u_{v^\varepsilon}^\varepsilon(s)
        - u_{v^\varepsilon}(s) \|^2 ds
        \nonumber \\
        & -
        \int_0^t \left( \mathbb{W}_2(\mathcal{L}_{u^\varepsilon(s)}, \mathcal{L}_{u^0(s)}) \right)^2 ds.
    \end{align}

    For the first term on the right-hand side of \eqref{h2-4},
    by \eqref{g-2}, we obtain
    \begin{align} \label{h2-6}
        & 2 \int_{0}^{t}
        \left( g(s, \cdot, u_{v^\varepsilon}^\varepsilon(s), \mathcal{L}_{u^\varepsilon(s)} )
        - g(s, \cdot, u_{v^\varepsilon}(s), \mathcal{L}_{u^0(s)} ),
        u_{v^\varepsilon}^\varepsilon(s) - u_{v^\varepsilon}(s)
        \right) ds  \nonumber \\
        \leq & 2
        \int_{0}^{t}
        \| \psi_g (s) \|_{  L^\infty(\mathbb{R}^n )}
        \| u_{v^\varepsilon}^\varepsilon(s) - u_{v^\varepsilon}(s)
        \|^2
        ds  \nonumber \\
        & + 2 \int_{0}^{t}
        \| \psi_g(s) \|
        \mathbb{W}_2 (\mathcal{L}_{u^\varepsilon(s)}, \mathcal{L}_{u^0(s)} )
        \|u_{v^\varepsilon}^\varepsilon(s) - u_{v^\varepsilon}(s)\|
        ds  \nonumber \\
        \leq & \int_{0}^{t}
        \left( 2 \| \psi_g
        (s) \|_{  L^\infty(\mathbb{R}^n)}
        + \| \psi_g(s) \|^2
        \right)
        \| u_{v^\varepsilon}^\varepsilon(s) - u_{v^\varepsilon}(s)
        \|^2
        ds  \nonumber \\
        & + \int_{0}^{t}
        \left( \mathbb{W}_2 (\mathcal{L}_{u^\varepsilon(s)}, \mathcal{L}_{u^0(s)} )
        \right)^2
        ds.
    \end{align}
    For the second term on the right-hand side of \eqref{h2-4},
    by \eqref{sigmalip}, we obtain
    \begin{align} \label{h2-7}
        & 2 \int_{0}^{t}
        \left(
        \left( \sigma(s, u_{v^\varepsilon}^\varepsilon(s), \mathcal{L}_{u^\varepsilon(s)} )
        - \sigma(s, u_{v^\varepsilon}(s), \mathcal{L}_{u^0(s)} )
        \right) v^\varepsilon(s),
        u_{v^\varepsilon}^\varepsilon(s) - u_{v^\varepsilon}(s)
        \right) ds
        \nonumber \\
        \leq & 2 \int_{0}^{t}
        \| \sigma(s, u_{v^\varepsilon}^\varepsilon(s), \mathcal{L}_{u^\varepsilon(s)} )
        - \sigma(s, u_{v^\varepsilon}(s), \mathcal{L}_{u^0(s)} )
        \|_{L_2(l^2, H)}
        \| v^\varepsilon(s) \|_{l^2}
        \| u_{v^\varepsilon}^\varepsilon(s) - u_{v^\varepsilon}(s) \|
        ds
        \nonumber \\
        \leq & 2 L_{\sigma}^{\frac{1}{2}}
        \int_{0}^{t}
        \| v^\varepsilon(s) \|_{l^2}
        \| u_{v^\varepsilon}^\varepsilon(s) - u_{v^\varepsilon}(s) \|^2
        ds
        \nonumber \\
        &
        +
        2
        L_{\sigma}^{\frac{1}{2}}
        \int_{0}^{t}
        \mathbb{W}_2 (\mathcal{L}_{u^\varepsilon(s)}, \mathcal{L}_{u^0(s)} )
        \| v^\varepsilon(s) \|_{l^2}
        \| u_{v^\varepsilon}^\varepsilon(s) - u_{v^\varepsilon}(s) \|
        ds
        \nonumber \\
        \leq & 2 L_{\sigma}^{\frac{1}{2}}
        \int_{0}^{t}
        \| v^\varepsilon(s) \|_{l^2}
        (1+ \| v^\varepsilon(s) \|_{l^2})
        \| u_{v^\varepsilon}^\varepsilon(s) - u_{v^\varepsilon}(s) \|^2
        ds
        +
        {\frac 12} L_{\sigma}^{\frac{1}{2}}
        \int_{0}^{t}
        \left ( \mathbb{W}_2 (\mathcal{L}_{u^\varepsilon(s)}, \mathcal{L}_{u^0(s)} )
        \right )^2  ds.
    \end{align}


    From \eqref{h2-4}-\eqref{h2-7}, it follows that
    \begin{align}\label{h2-8}
        & \| u_{v^\varepsilon}^\varepsilon(t) - u_{v^\varepsilon}(t) \|^2
        + 2 \int_0^t
        \| (-\triangle)^{\frac{\alpha}{2}}
        \left( u_{v^\varepsilon}^\varepsilon (s)
        - u_{v^\varepsilon}(s)
        \right)
        \|^2 ds  \nonumber \\
        \leq &  c_1
        \int_0^t \left( 1    + \| v^\varepsilon(s) \|^2_{l^2} \right)
        \| u_{v^\varepsilon}^\varepsilon(s)
        - u_{v^\varepsilon}(s) \|^2 ds
        + c_1
        \int_0^t \mathbb{E} \left[ \| u^\varepsilon(s) - u^0(s) \|^2 \right] ds
        \nonumber \\
        &
        + \varepsilon \int_{0}^{t}
        \| \sigma(s, u_{v^\varepsilon}^\varepsilon(s), \mathcal{L}_{u^\varepsilon(s)}) \|^2_{L_2(l^2, H)}ds
        + 2 \sqrt{\varepsilon} \int_{0}^{t}
        \left( u_{v^\varepsilon}^\varepsilon(s) - u_{v^\varepsilon}(s),
        \sigma(s, u_{v^\varepsilon}^\varepsilon(s), \mathcal{L}_{u^\varepsilon(s)} ) dW(s)
        \right),
    \end{align}
    where
    $c_1=c_1(T)>0$.

    Given $R>0$ and $\varepsilon \in (0,1)$,  define a stopping time
    \begin{align*}
        \tau_R^\varepsilon
        = \inf
        \left\{ t \geq 0: \| u_{v^\varepsilon}^\varepsilon(t) \| \geq R
        \right\} \wedge T.
    \end{align*}
    Then by \eqref{h2-8}, we obtain that for any $t \in [0,T]$,
    \begin{align}\label{h2-9}
        & \sup_{r\in [0,t]}
        \Big(
        \| u_{v^\varepsilon}^\varepsilon(r \wedge \tau_R^\varepsilon) - u_{v^\varepsilon}(r \wedge \tau_R^\varepsilon) \|^2
        + 2 \int_0^{r \wedge \tau_R^\varepsilon}
        \| (-\triangle)^{\frac{\alpha}{2}}
        \left( u_{v^\varepsilon}^\varepsilon (s)
        - u_{v^\varepsilon}(s)
        \right)
        \|^2 ds
        \Big) \nonumber \\
        \leq & c_1
        \int_0^{t \wedge \tau_R^\varepsilon}
        \left( 1
        + \| v^\varepsilon(s) \|^2
        _{l^2} \right)
        \| u_{v^\varepsilon}^\varepsilon(s)
        - u_{v^\varepsilon}(s) \|^2
        ds
        + c_1
        \int_0^{T} \mathbb{E} \left[ \| u^\varepsilon(s) - u^0(s) \|^2 \right] ds
        \nonumber \\
        & + \varepsilon \int_{0}^{T \wedge \tau_R^\varepsilon}
        \| \sigma(s, u_{v^\varepsilon}^\varepsilon(s), \mathcal{L}_{u^\varepsilon(s)}) \|^2_{L_2(l^2, H)} ds
        \nonumber \\
        & + 2 \sqrt{\varepsilon}
        \sup_{r \in [0,T]}
        \left| \int_{0}^{r \wedge \tau_R^\varepsilon}
        \left( u_{v^\varepsilon}^\varepsilon(s) - u_{v^\varepsilon}(s),
        \sigma(s, u_{v^\varepsilon}^\varepsilon(s), \mathcal{L}_{u^\varepsilon(s)} ) dW(s)
        \right)
        \right|.
    \end{align}
    Since $v^\varepsilon\in \mathcal{A}_N$,
    by Gronwall's inequality and \eqref{h2-9}, we obtain
    that for all  $t \in [0,T]$,
    \begin{align}\label{h2-10}
        & \sup_{r \in [0,t]}
        \Big(
        \| u_{v^\varepsilon}^\varepsilon(r \wedge \tau_R^\varepsilon) - u_{v^\varepsilon}(r \wedge \tau_R^\varepsilon) \|^2
        + 2 \int_0^{r \wedge \tau_R^\varepsilon}
        \| (-\triangle)^{\frac{\alpha}{2}}
        \left( u_{v^\varepsilon}^\varepsilon (s)
        - u_{v^\varepsilon}(s)
        \right)
        \|^2 ds
        \Big) \nonumber \\
        \leq &
        c_1 e^{
            c_1 (T +N )  }
        \int_0^T \mathbb{E} \left[ \| u^\varepsilon(s) - u^0(s) \|^2 \right]
        ds
        + \varepsilon
        e^{
            c_1 (T +N )  }
        \int_{0}^{T \wedge \tau_R^\varepsilon}
        \| \sigma(s, u_{v^\varepsilon}^\varepsilon(s), \mathcal{L}_{u^\varepsilon(s)}) \|^2_{L_2(l^2, H)} ds
        \nonumber \\
        & + 2 \sqrt{\varepsilon}
        e^{
            c_1 (T +N )  }
        \sup_{r \in [0,T]}
        \left| \int_{0}^{r \wedge \tau_R^\varepsilon}
        \left( u_{v^\varepsilon}^\varepsilon(s) - u_{v^\varepsilon}(s),
        \sigma(s, u_{v^\varepsilon}^\varepsilon(s), \mathcal{L}_{u^\varepsilon(s)} ) dW(s)
        \right)
        \right|.
    \end{align}

    For the first term on the right-hand side of \eqref{h2-10},
    by Lemma \ref{sdc}, we get
    \begin{align}\label{h2-11-1}
        c_1
        e^{
            c_1 (T +N )  }
        \int_0^T \mathbb{E} \left[ \| u^\varepsilon(s) - u^0(s) \|^2 \right]
        ds
        \leq
        c_1
        e^{
            c_1 (T +N )  }
        T
        C \varepsilon
        \left( 1 + \|u_0\|^2 \right)
        \rightarrow    \ 0,  \    \text{as}\  \varepsilon \to 0.
    \end{align}
    For the second term on the right-hand side of \eqref{h2-10},
    by \eqref{sigma3}
    and Theorem \ref{mvsde-wellposed}, we have
    \begin{align}\label{h2-11}
        & \varepsilon
        e^{
            c_1 (T +N )  }
        \int_{0}^{T \wedge \tau_R^\varepsilon}
        \| \sigma(s, u_{v^\varepsilon}^\varepsilon(s), \mathcal{L}_{u^\varepsilon(s)}) \|^2_{L_2(l^2, H)} ds
        \nonumber \\
        \leq & \varepsilon
        e^{
            c_1 (T +N )  }
        \int_{0}^{T \wedge \tau_R^\varepsilon}
        M_{\sigma, T}
        \left( 1 + \| u_{v^\varepsilon}^\varepsilon(s)\|^2
        +
        \mathbb{E}\left [
        \| u^\varepsilon(s) \|^2
        \right ]
        \right)
        ds \nonumber \\
        \leq & \varepsilon
        e^{
            c_1 (T +N )  }
        T M_{\sigma, T} \left( 1 +  R^2
        + M_1
        +M_1 \|u_0\|^2 \right)
        \nonumber \\
        \rightarrow & \ 0,  \ \ \   \text{as}\  \varepsilon \to 0, \ \ \   \mathbb{P} \text{-almost surely}.
    \end{align}
    For the third term on the right-hand side of \eqref{h2-10},
    by Doob's maximal inequality, \eqref{sigma3}, \eqref{dceue}
    and   Theorem \ref{mvsde-wellposed}, we have
    \begin{align*}
        &
        \mathbb{E}
        \left[
        \varepsilon
        \sup_{r \in [0,T]}
        \left|
        \int_{0}^{r \wedge \tau_R^\varepsilon}
        \left( u_{v^\varepsilon}^\varepsilon(s) - u_{v^\varepsilon}(s),
        \sigma(s, u_{v^\varepsilon}^\varepsilon(s), \mathcal{L}_{u^\varepsilon(s)} ) dW(s)
        \right)
        \right|^2
        \right] \nonumber \\
        \leq & 4 \varepsilon
        \mathbb{E}
        \left[ \int_{0}^{T \wedge \tau_R^\varepsilon}
        \| u_{v^\varepsilon}^\varepsilon(s) - u_{v^\varepsilon}(s)
        \|^2
        \| \sigma(s, u_{v^\varepsilon}^\varepsilon(s), \mathcal{L}_{u^\varepsilon(s)} ) \|_{L_2(l^2, H)}^2
        ds
        \right]  \nonumber \\
        \leq & 4 \varepsilon \left(R + M_2 \right)^2
        \mathbb{E}
        \left[ \int_{0}^{T \wedge \tau_R^\varepsilon}
        \| \sigma(s, u_{v^\varepsilon}^\varepsilon(s), \mathcal{L}_{u^\varepsilon(s)} ) \|_{L_2(l^2, H)}^2
        ds
        \right]  \nonumber \\
        \leq & 4 \varepsilon
        \left(R +M_2 \right)^2
        T M_{\sigma, T}
        \left( 1 +   R^2
        +M_1+M_1
        \|u_0\|^2
        \right) \nonumber \\
        \rightarrow & \  0, \ \ \  \text{as} \ \varepsilon \to 0,
    \end{align*}
    which implies that
    \begin{align} \label{h2-12}
        \sqrt{\varepsilon}
        \sup_{r \in [0,T]}
        \left| \int_{0}^{r \wedge \tau_R^\varepsilon}
        \left( u_{v^\varepsilon}^\varepsilon(s) - u_{v^\varepsilon}(s),
        \sigma(s, u_{v^\varepsilon}^\varepsilon(s), \mathcal{L}_{u^\varepsilon(s)} ) dW(s)
        \right)
        \right|
        \rightarrow  \  0   \ \   \text{in probability}.
    \end{align}

    It then follows from \eqref{h2-10}-\eqref{h2-12}  that
    \begin{align}\label{h2-13}
        & \sup_{t\in [0,T]}
        \left(
        \| u_{v^\varepsilon}^\varepsilon(t \wedge \tau_R^\varepsilon)
        - u_{v^\varepsilon}(t \wedge \tau_R^\varepsilon) \|^2
        \right)  \nonumber \\
        & + \int_0^{T \wedge \tau_R^\varepsilon}
        \| (-\triangle)^{\frac{\alpha}{2}}
        \left( u_{v^\varepsilon}^\varepsilon (t)
        - u_{v^\varepsilon}(t)
        \right)
        \|^2 dt
        \rightarrow 0 \ \ \ \ \   \text{in\ probability}.
    \end{align}

    By Theorem \ref{mvsde-wellposed} and Lemma \ref{pre-h2}, we have for any $\varepsilon \in [0,1]$,
    \begin{align}\label{h2-14}
        P \left( \tau_R^\varepsilon < T \right)
        & \leq  P
        \left( \sup_{t \in [0,T]}
        \| u_{v^\varepsilon}^\varepsilon(t) \| \geq R
        \right)
        \leq \frac{1}{R^2}
        \mathbb{E}
        \left[ \sup_{t \in [0,T]}
        \| u_{v^\varepsilon}^\varepsilon(t) \|^2
        \right]
        \leq \frac{M_3}{R^2} .
    \end{align}
    From \eqref{h2-14}, it follows that for any $\delta > 0$,
    \begin{align} \label{h2-15}
        & P
        \left( \sup_{t \in [0,T]}
        \left\| u_{v^\varepsilon}^\varepsilon(t)
        - u_{v^\varepsilon}(t)
        \right\| > \delta
        \right)  \nonumber \\
        \leq & P
        \left( \sup_{t \in [0,T]}
        \left\| u_{v^\varepsilon}^\varepsilon(t)
        - u_{v^\varepsilon}(t)
        \right\|  >  \delta,
        \tau_R^{\varepsilon} = T
        \right)
        + P
        \left( \sup_{t \in [0,T]}
        \left\| u_{v^\varepsilon}^\varepsilon(t)
        - u_{v^\varepsilon}(t)
        \right\|  >  \delta,
        \tau_R^{\varepsilon} < T
        \right)  \nonumber \\
        \leq & P
        \left( \sup_{t \in [0,T]}
        \left\| u_{v^\varepsilon}^\varepsilon(t \wedge \tau_R^{\varepsilon})
        - u_{v^\varepsilon}(t \wedge \tau_R^{\varepsilon})
        \right\|  >  \delta
        \right)
        + \frac{M_3}{R^2} .
    \end{align}
    For \eqref{h2-15}, first letting $\varepsilon \rightarrow 0$, and then $R \rightarrow +\infty$,
    we obtain by \eqref{h2-13} that
    \begin{align}\label{h2-16}
        \lim_{\varepsilon \rightarrow 0}
        \left( u_{v^\varepsilon}^\varepsilon
        - u_{v^\varepsilon}
        \right) = 0
        \ \   \text{in probability
            in } \  C([0,T],H).
    \end{align}
    From \eqref{h2-14}, we obtain that for any $\delta > 0$,
    \begin{align} \label{h2-17}
        & P
        \left( \int_0^T
        \| u_{v^\varepsilon}^\varepsilon(t)
        - u_{v^\varepsilon}(t)
        \|_V^2  > \delta
        \right)  \nonumber \\
        \leq & P
        \left( \int_0^T
        \| u_{v^\varepsilon}^\varepsilon(t)
        - u_{v^\varepsilon}(t)
        \|_V^2  > \delta,
        \tau_R^{\varepsilon} = T
        \right) \nonumber \\
        & + P
        \left( \int_0^T
        \| u_{v^\varepsilon}^\varepsilon(t)
        - u_{v^\varepsilon}(t)
        \|_V^2  > \delta,
        \tau_R^{\varepsilon} < T
        \right)  \nonumber \\
        \leq & P
        \left( \int_0^{T \wedge \tau_R^\varepsilon}
        \| u_{v^\varepsilon}^\varepsilon(t)
        - u_{v^\varepsilon}(t)
        \|_V^2  > \delta
        \right)
        + \frac{M_3}{R^2} .
    \end{align}
    First letting $\varepsilon \rightarrow 0$, and then $R \rightarrow +\infty$,
    we obtain by \eqref{h2-13} that
    \begin{align} \label{h2-18}
        \lim_{\varepsilon \rightarrow 0}
        \left( u_{v^\varepsilon}^\varepsilon - u_{v^\varepsilon}
        \right)
        =0 \ \    \text{in
            probability
            in } \  L^2(0, T; V).
    \end{align}
    By \eqref{h2-16} and \eqref{h2-18},
    we see that
    $
    u_{v^\varepsilon}^\varepsilon - u_{v^\varepsilon}
    \rightarrow 0
    $ in probability
    in   $C([0,T], H) \bigcap L^2(0,T; V)$
    as $\varepsilon \rightarrow 0$,
    which concludes the proof.
 \end{proof}

 Now, we
 are ready to prove
 the LDP of \eqref{mvsde-2}-\eqref{mvsde-2 IV}
 in
 $C([0,T], H) \bigcap L^2(0,T; V)$.

 {\bf Proof of Theorem \ref{main} :}

 It follows from Theorem \ref{h1} and Theorem \ref{h2} that
 the solution family $\{ u^\varepsilon \}$ of \eqref{mvsde-2}-\eqref{mvsde-2 IV}
 satisfies
 the conditions $({\bf H1})$ and $({\bf H2})$ in Proposition \ref{ldpth}
 in the space
 $ C([0,T], H)
 \bigcap  L^2(0,T; V) $, and hence
 it satisfies the LDP
 in that space.

 \subsection{Large deviations
    with  strongly
    dissipative drift  }

 In this subsection, we   investigate the LDP of
 \eqref{mvsde-2}-\eqref{mvsde-2 IV} in
 the  product space
 $$
 C([0,T], H)
 \bigcap
 L^2(0,T; V)
 \bigcap
 L^p(0,T; L^p(\mathbb{R}^n ) ).
 $$
 To that end, we further assume
 the  nonlinear drift   $f$ in
 \eqref{mvsde-1} satisfies the following dissipative condition:
 for any $t,u_1, u_2 \in \mathbb{R}$,
 $x \in \mathbb{R}^n$
 and $\mu \in \mathcal{P}_2(H)$,
 \begin{align} \label{f-5}
    \left(
    f(t, x, u_1, \mu) - f(t, x, u_2, \mu)
    \right)
    \left( u_1 - u_2
    \right)
    \geq \lambda_4 |u_1 -u_2|^p
    - \psi_5(t,x) |u_1 - u_2|^2,
 \end{align}
 where the constant $\lambda_4>0$
 and $\psi_5 \in L_{loc}^\infty(\mathbb{R}, L^\infty(\mathbb{R}^n) )$.

 \begin{lemma} \label{h1-sd}
    Suppose that $({\bf \Sigma 1})$-$({\bf \Sigma 3})$ and \eqref{f-5} hold,
    $u_0 \in H$, and
    $v, v_i \in L^2(0, T; l^2)$ for $i \in \mathbb{N}$.
    If $v_i \rightarrow v$ weakly in $L^2(0, T; l^2)$,
    then
    $\mathcal{G}^0 \big( \int_0^\cdot v_i(s) ds \big)$
    converges to
    $\mathcal{G}^0 \big( \int_0^\cdot v(s) ds \big)$
    in the space
    $$C([0,T], H)
    \bigcap
    L^2(0,T; V)
    \bigcap
    L^p(0,T; L^p(\mathbb{R}^n ) ).
    $$
 \end{lemma}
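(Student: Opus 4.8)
The plan is to build directly on Theorem~\ref{h1}, which already yields the strong convergence $u_{v_i}\to u_v$ in $C([0,T],H)\cap L^2(0,T;V)$ under $({\bf \Sigma 1})$--$({\bf \Sigma 3})$ alone. Since these hypotheses are part of the present assumptions, it remains only to establish the additional convergence $u_{v_i}\to u_v$ in $L^p(0,T;L^p(\mathbb{R}^n))$, and for this the extra dissipative condition \eqref{f-5} is exactly what is needed. Throughout I would write $w_i=u_{v_i}-u_v$ and exploit the fact that both $u_{v_i}$ and $u_v$ solve \eqref{contequ}-\eqref{contequ IV} with the \emph{same} measure argument $\mathcal{L}_{u^0(t)}$, so that no Wasserstein-distance error term enters the nonlinear estimate.

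First I would write the energy identity for $w_i$ exactly as in \eqref{sc-23}, but estimate the nonlinear term by the strong dissipative bound \eqref{f-5} instead of the one-sided bound \eqref{f-4}. Since both copies of $f$ carry the measure $\mathcal{L}_{u^0(t)}$, condition \eqref{f-5} applies directly with $\mu=\mathcal{L}_{u^0(t)}$ and gives
\begin{align*}
& 2\int_{\mathbb{R}^n}
     \left( f(t,x,u_{v_i}(t),\mathcal{L}_{u^0(t)})
            - f(t,x,u_v(t),\mathcal{L}_{u^0(t)})
     \right) w_i(t)\, dx \\
\geq\ & 2\lambda_4 \| w_i(t) \|_{L^p(\mathbb{R}^n)}^p
        - 2 \| \psi_5(t) \|_{L^\infty(\mathbb{R}^n)} \| w_i(t) \|^2 .
\end{align*}
Substituting this into the integrated energy identity on $(0,T)$ (with $w_i(0)=0$) keeps the coercive term $2\lambda_4\int_0^T\|w_i(s)\|_{L^p(\mathbb{R}^n)}^p\,ds$ on the left-hand side, while the $\psi_5$-contribution passes to the right.

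Next I would bound the right-hand side exactly as in Step~4 of Theorem~\ref{h1}. The new $\psi_5$-term is dominated by $2\|\psi_5\|_{L^\infty(0,T;L^\infty(\mathbb{R}^n))}\|w_i\|_{L^2(0,T;H)}^2$; the $g$-term and $\sigma$-term are handled verbatim via \eqref{g-2}, \eqref{sigma3} and the uniform bound \eqref{sc-1}, each being controlled by a constant times $\|w_i\|_{L^2(0,T;H)}$ or $\|w_i\|_{L^2(0,T;H)}^2$ (using that $v_i,v$ are bounded in $L^2(0,T;l^2)$). Because Theorem~\ref{h1} already guarantees $\|w_i\|_{L^2(0,T;H)}\to 0$, every term on the right-hand side tends to zero, and hence so does the left-hand side; in particular $\int_0^T\|w_i(s)\|_{L^p(\mathbb{R}^n)}^p\,ds\to 0$, which is precisely convergence in $L^p(0,T;L^p(\mathbb{R}^n))$.

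I do not expect a genuine obstacle here: all the hard analysis --- the equicontinuity in $(V\cap L^p(\mathbb{R}^n))^\ast$, the tail-end estimates of Lemma~\ref{tailestim}, the precompactness via the compact embedding on bounded balls, and the identification of the weak limit as $u_v$ --- was carried out in Theorem~\ref{h1} and is reused wholesale. The only genuinely new point is that \eqref{f-5} upgrades the weak one-sided nonlinear estimate into a bound that is \emph{coercive} in the $L^p$-norm of the difference; the sole thing to verify carefully is that the shared measure $\mathcal{L}_{u^0(t)}$ in the controlled equation lets \eqref{f-5} apply cleanly, with no additional $\mathbb{W}_2$ term to absorb.
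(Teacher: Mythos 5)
Your proposal is correct and follows essentially the same route as the paper: the paper likewise applies \eqref{f-5} in the energy identity \eqref{sc-23} to retain the coercive term $2\lambda_4\|u_{v_i}(t)-u_v(t)\|_{L^p(\mathbb{R}^n)}^p$ on the left, and then reuses the estimates leading to \eqref{sc-28} to send the right-hand side to zero. Your observation that the shared measure argument $\mathcal{L}_{u^0(t)}$ lets \eqref{f-5} apply without any $\mathbb{W}_2$ correction is exactly the point the paper relies on.
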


 \begin{proof}
    By \eqref{sc-23} and \eqref{f-5}, we obtain
    \begin{align}\label{sd-1}
        & \frac{d}{dt} \| u_{v_i}(t) - u_v(t) \|^2
        + 2 \| (-\triangle)^{\frac{\alpha}{2}} \left( u_{v_i}(t) - u_v(t) \right) \|^2
        + 2 \lambda_4 \| u_{v_i}(t) - u_v(t)\|_{L^p(\mathbb{R}^n)}^p
        \nonumber \\
        \leq & 2 \| \psi_5(t) \|_{L^\infty(\mathbb{R}^n)}
        \| u_{v_i}(t) - u_v(t) \|^2
        \nonumber \\
        & + 2
        \left( g(t, \cdot, u_{v_i}(t), \mathcal{L}_{u^0(t)} )
        - g(t, \cdot, u_v(t), \mathcal{L}_{u^0(t)} ), \
        u_{v_i}(t) - u_v(t)
        \right)  \nonumber \\
        & + 2
        \left(
        \sigma(t, u_{v_i}(t), \mathcal{L}_{u^0(t)} ) v_i(t)
        - \sigma(t, u_v(t), \mathcal{L}_{u^0(t)} ) v(t), \
        u_{v_i}(t) - u_v(t)
        \right).
    \end{align}
    Then together with \eqref{sd-1} and the argument of \eqref{sc-28},
    the desired result follows immediately.
 \end{proof}

 \begin{lemma}\label{h2-sd}
    Suppose that $({\bf \Sigma 1})$-$({\bf \Sigma 3})$ and \eqref{f-5} hold,
    and $\{ v^\varepsilon \} \subseteq \mathcal{A}_N$ with $ N < \infty$.
    Then
    $$
    \lim_{\varepsilon \to 0}
    \left ( \mathcal{G}^\varepsilon
    \left( W + \varepsilon^{-\frac{1}{2}}
    \int_{0}^{\cdot} v^\varepsilon(t) dt
    \right)
    - \mathcal{G}^0
    \left( \int_{0}^{\cdot} v^\varepsilon(t) dt
    \right)
    \right ) = 0
    \ \ \  \text{in probability}
    $$
    in the space
    $C([0,T], H) \bigcap L^2(0,T; V) \bigcap L^p(0,T; L^p(\mathbb{R}^n ) )$.
 \end{lemma}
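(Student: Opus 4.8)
The plan is to re-run the Itô/energy argument of Theorem \ref{h2} while retaining the $L^p$-dissipation generated by the new condition \eqref{f-5}, and then to upgrade the mode of convergence accordingly. Writing $w^\varepsilon = u_{v^\varepsilon}^\varepsilon - u_{v^\varepsilon}$, I would apply Itô's formula to $\|w^\varepsilon(t)\|^2$ exactly as in \eqref{h2-4}. The only structural change occurs in the treatment of the nonlinear drift term \eqref{h2-5}: after splitting it into a part carrying the common law $\mathcal{L}_{u^\varepsilon(s)}$ and a part measuring the discrepancy between $\mathcal{L}_{u^\varepsilon(s)}$ and $\mathcal{L}_{u^0(s)}$, I would estimate the first part from below by \eqref{f-5} rather than by \eqref{f-4}. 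This yields the additional nonnegative term $2\lambda_4 \int_0^t \|w^\varepsilon(s)\|_{L^p(\mathbb{R}^n)}^p\, ds$ on the left-hand side, at the expense of the term $-2\int_0^t \|\psi_5(s)\|_{L^\infty(\mathbb{R}^n)} \|w^\varepsilon(s)\|^2\, ds$; the law-discrepancy part is still controlled by \eqref{f-2} through $\psi_3$ and $\mathbb{W}_2(\mathcal{L}_{u^\varepsilon(s)}, \mathcal{L}_{u^0(s)})$, while the $g$- and $\sigma$-contributions are bounded exactly as in \eqref{h2-6}--\eqref{h2-7}.

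Carrying these estimates through, I expect an integrated inequality of the form
\begin{align*}
\|w^\varepsilon(t)\|^2
+ 2\int_0^t \|(-\triangle)^{\frac{\alpha}{2}} w^\varepsilon(s)\|^2\, ds
+ 2\lambda_4 \int_0^t \|w^\varepsilon(s)\|_{L^p(\mathbb{R}^n)}^p\, ds
\leq c\int_0^t \left( 1 + \|v^\varepsilon(s)\|_{l^2}^2 \right)\|w^\varepsilon(s)\|^2\, ds
+ \mathcal{R}^\varepsilon(t),
\end{align*}
where $\mathcal{R}^\varepsilon(t)$ collects the law-discrepancy, the $\sqrt{\varepsilon}$-diffusion and the stochastic-integral contributions, that is, precisely the remainder already shown to vanish in the proof of Theorem \ref{h2}. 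The key observation is that the new $L^p$-integral is nonnegative, so it may simply be discarded when one first seeks a Gronwall bound on $\|w^\varepsilon\|^2$: following the stopping-time scheme of Theorem \ref{h2} with $\tau_R^\varepsilon$ as defined there, I would apply Gronwall on $[0, t\wedge\tau_R^\varepsilon]$ to recover $\sup_{t} \|w^\varepsilon(t\wedge\tau_R^\varepsilon)\|^2 \to 0$ and $\int_0^{T\wedge\tau_R^\varepsilon} \|(-\triangle)^{\frac{\alpha}{2}} w^\varepsilon\|^2\, ds \to 0$ in probability, invoking Lemma \ref{sdc} to kill the $\mathbb{W}_2(\mathcal{L}_{u^\varepsilon(s)}, \mathcal{L}_{u^0(s)})$ terms and Lemma \ref{pre-h2} with the localization \eqref{h2-14} to remove the stopping time.

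Once $\sup_t \|w^\varepsilon\|^2$ and the $V$-integral are known to vanish in probability, I would return to the integrated inequality to read off the $L^p$-bound: since the whole right-hand side converges to zero in probability and the $V$- and $L^2$-terms on the left are nonnegative, the inequality forces $2\lambda_4 \int_0^{T\wedge\tau_R^\varepsilon} \|w^\varepsilon(s)\|_{L^p(\mathbb{R}^n)}^p\, ds \to 0$ in probability; removing the stopping time via \eqref{h2-14} as before then gives $\int_0^T \|w^\varepsilon(s)\|_{L^p(\mathbb{R}^n)}^p\, ds \to 0$ in probability. Combined with the convergence already furnished by Theorem \ref{h2}, this establishes the claimed convergence in $C([0,T],H)\bigcap L^2(0,T;V)\bigcap L^p(0,T;L^p(\mathbb{R}^n))$. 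I anticipate no genuinely new analytic difficulty here; the only point demanding care is the bookkeeping on the stopping interval, namely ensuring that the $L^p$-dissipation term is carried along untouched through the Gronwall step instead of being absorbed, so that it can be estimated a posteriori against the already-vanishing remainder.
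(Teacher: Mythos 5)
Your proposal is correct and follows essentially the same route as the paper: the authors likewise rewrite the energy identity \eqref{h2-4} using \eqref{f-5} in place of \eqref{f-4} to place the extra nonnegative term $2\lambda_4\int_0^t\|u^\varepsilon_{v^\varepsilon}(s)-u_{v^\varepsilon}(s)\|^p_{L^p(\mathbb{R}^n)}\,ds$ on the left-hand side, and then declare the remainder of the argument identical to Theorem \ref{h2}. Your additional remarks on carrying the $L^p$-dissipation through the Gronwall/stopping-time step and reading it off a posteriori are exactly the omitted details.
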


 \begin{proof}
    It follows from \eqref{h2-4} and \eqref{f-5} that
    \begin{align*}\label{sd-2}
        & \| u_{v^\varepsilon}^\varepsilon(t) - u_{v^\varepsilon}(t) \|^2
        + 2 \int_0^t \| (-\triangle)^{\frac{\alpha}{2}}
        \left( u_{v^\varepsilon}^\varepsilon (s)
        - u_{v^\varepsilon}(s)
        \right)
        \|^2 ds  \nonumber \\
        & + 2 \lambda_4 \int_0^t
        \| u_{v^\varepsilon}^\varepsilon (s)
        - u_{v^\varepsilon}(s) \|_{L^p(\mathbb{R}^n)}^p ds
        \nonumber \\
        \leq & 2 \int_{0}^{t} \| \psi_5(s) \|_{L^\infty(\mathbb{R}^n)}
        \| u_{v^\varepsilon}^\varepsilon(s) - u_v(s) \|^2 ds
        \nonumber \\
        & + 2 \int_{0}^{t}
        \left( g(s, \cdot, u_{v^\varepsilon}^\varepsilon(s), \mathcal{L}_{u^\varepsilon(s)} )
        - g(s, \cdot, u_{v^\varepsilon}(s), \mathcal{L}_{u^0(s)} ),\
        u_{v^\varepsilon}^\varepsilon(s) - u_{v^\varepsilon}(s)
        \right) ds  \nonumber \\
        & + 2 \int_{0}^{t}
        \left(
        \left( \sigma(s, u_{v^\varepsilon}^\varepsilon(s), \mathcal{L}_{u^\varepsilon(s)} )
        - \sigma(s, u_{v^\varepsilon}(s), \mathcal{L}_{u^0(s)} )
        \right) v^\varepsilon(s),\
        u_{v^\varepsilon}^\varepsilon(s) - u_{v^\varepsilon}(s)
        \right) ds  \nonumber \\
        & + \varepsilon \int_{0}^{t}
        \| \sigma(s, u_{v^\varepsilon}^\varepsilon(s), \mathcal{L}_{u^\varepsilon(s)} ) \|^2_{L_2(l^2, H)}ds
        \nonumber \\
        & + 2 \sqrt{\varepsilon} \int_{0}^{t}
        \left( u_{v^\varepsilon}^\varepsilon(s) - u_{v^\varepsilon}(s),\
        \sigma(s, u_{v^\varepsilon}^\varepsilon(s), \mathcal{L}_{u^\varepsilon(s)} ) dW(s)
        \right).
    \end{align*}
    The rest of the proof is
    is similar to   Theorem \ref{h2}. The details are omitted.
 \end{proof}

 Now, we present the LDP
 of the stochastic equation  in
 $ C([0,T], H) \bigcap L^2(0,T; V) \bigcap L^p(0,T; L^p(\mathbb{R}^n ) )
 $.

 \begin{theorem}\label{main-sd}
    Suppose that $({\bf \Sigma 1})$-$({\bf \Sigma 3})$ and \eqref{f-5} hold,
    and $u^\varepsilon$ is the solution of \eqref{mvsde-2}-\eqref{mvsde-2 IV}.
    Then as $\varepsilon \rightarrow 0$, the family $\{ u^\varepsilon \}$
    satisfies the
    LDP in the space
    $$ C([0,T], H)  \bigcap  L^2(0,T; V) \bigcap L^p(0,T; L^p(\mathbb{R}^n ) ) $$
    with   good rate function   given by \eqref{rate}.
 \end{theorem}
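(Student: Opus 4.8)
The plan is to prove Theorem \ref{main-sd} by the weak convergence method of Proposition \ref{ldpth}, following the very same line as the proof of Theorem \ref{main}, but now working in the larger Polish space $\mathbb{S} = C([0,T], H) \bigcap L^2(0,T; V) \bigcap L^p(0,T; L^p(\mathbb{R}^n))$ and invoking the strengthened convergence statements that hold under the additional dissipative hypothesis \eqref{f-5}. The two facts I would assume are Lemma \ref{h1-sd} and Lemma \ref{h2-sd}, which have already upgraded the convergence in Theorem \ref{h1} and Theorem \ref{h2} from $C([0,T],H) \cap L^2(0,T;V)$ to the full product space by adding control of the $L^p(0,T; L^p(\mathbb{R}^n))$-norm of the difference of solutions.

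First I would fix the measurable map $\mathcal{G}^\varepsilon$ from Lemma \ref{measmap} and the limiting map $\mathcal{G}^0$ from \eqref{G0}, now regarded as maps into $\mathbb{S}$; this is legitimate because Theorem \ref{cewellposed} guarantees that the controlled solution $u_v$ always belongs to $L^p(0,T; L^p(\mathbb{R}^n))$, so $\mathcal{G}^0$ indeed takes values in $\mathbb{S}$ and the rate function \eqref{rate} is unchanged. Next I would verify $({\bf H1})$: given $\{v_i\} \subseteq S_N$ with $v_i \to v$ weakly in $L^2(0,T;l^2)$, Lemma \ref{h1-sd} yields that $\mathcal{G}^0(\int_0^\cdot v_i(s)\,ds)$ converges to $\mathcal{G}^0(\int_0^\cdot v(s)\,ds)$ in $\mathbb{S}$. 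Then I would verify $({\bf H2})$: for every $N$, every family $\{v^\varepsilon\} \subseteq \mathcal{A}_N$ and every $\delta>0$, Lemma \ref{h2-sd} gives that $\mathcal{G}^\varepsilon(W + \varepsilon^{-1/2}\int_0^\cdot v^\varepsilon(t)\,dt) - \mathcal{G}^0(\int_0^\cdot v^\varepsilon(t)\,dt) \to 0$ in probability in $\mathbb{S}$, which is exactly the assertion of $({\bf H2})$ for the metric $\rho$ of $\mathbb{S}$.

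With $({\bf H1})$ and $({\bf H2})$ confirmed in $\mathbb{S}$, Proposition \ref{ldpth} immediately delivers the LDP for $\{u^\varepsilon\}$ in $\mathbb{S}$ with rate function \eqref{rate}; the goodness of $I$, i.e.\ the compactness of its sublevel sets, follows from $({\bf H1})$ together with the weak compactness of the balls $S_N$ in the Budhiraja--Dupuis framework. I do not expect any genuinely new obstacle at this stage, since all the analytic difficulty — namely extracting strong $L^p$ convergence of the controlled and of the stochastic solutions, which relies on the coercive lower bound \eqref{f-5} to dominate the polynomial nonlinearity $f$ — has already been isolated in Lemma \ref{h1-sd} and Lemma \ref{h2-sd}. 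The only point demanding a line of care is that convergence in the product space $\mathbb{S}$ is equivalent to simultaneous convergence in each factor, so that it is enough to establish $({\bf H1})$ and $({\bf H2})$ component-by-component in $C([0,T],H)$, in $L^2(0,T;V)$, and in $L^p(0,T;L^p(\mathbb{R}^n))$, which is precisely what the two lemmas supply.
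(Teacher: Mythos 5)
Your proposal is correct and follows exactly the paper's own route: the paper proves Theorem \ref{main-sd} by citing Lemma \ref{h1-sd} for $({\bf H1})$ and Lemma \ref{h2-sd} for $({\bf H2})$ in the product space and then applying Proposition \ref{ldpth}. The additional remarks you make (that $\mathcal{G}^0$ takes values in the larger space by Theorem \ref{cewellposed}, and that goodness of the rate function comes with the Budhiraja--Dupuis framework) are consistent with, and only slightly more explicit than, what the paper records.
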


 \begin{proof}
    By Lemma \ref{h1-sd}
    and  Lemma \ref{h2-sd},
    we see  that
    the conditions of Proposition \ref{ldpth} are fulfilled
    in
    $  C([0,T], H)  \bigcap  L^2(0,T; V) \bigcap L^p(0,T; L^p(\mathbb{R}^n ) ) $,
    and hence the family
    $\{ u^\varepsilon \}$
    satisfies the
    LDP in this case.
 \end{proof}

\section*{Acknowledgements}

 The work is partially supported by the NNSF of China (11471190, 11971260).

\section*{Declarations}

{\bf Conflict of interest}
 The authors have no relevant financial or non-financial interests to disclose.

\end{document}